\documentclass [a4paper,12pt]{article}
\usepackage{amsmath}
\usepackage{amsfonts}
\usepackage{indentfirst}
\usepackage{esint}
\usepackage{latexsym}
\usepackage{bbm}
\usepackage{amsfonts}
\usepackage{mathrsfs}
\usepackage{amssymb}
\usepackage{amsmath}
\usepackage{amsthm}
\usepackage{latexsym}
\usepackage{indentfirst}
\usepackage{enumitem}
\usepackage{bm}
\usepackage{esint}
\usepackage{hyperref}
\usepackage{cleveref}
\usepackage[numbers,square]{natbib}
\setlength\bibsep{0pt}
\numberwithin{equation}{section}
\usepackage{color}
\allowdisplaybreaks
\usepackage{amsthm}
\usepackage{amssymb}
\usepackage{enumerate}
\usepackage{ulem}
\textheight=8.8in \textwidth=6.20in
\topmargin=0mm \oddsidemargin=0mm
\evensidemargin=0mm
\allowdisplaybreaks

\newtheorem{theorem}{Theorem}[section]
\newtheorem{lemma}[theorem]{Lemma}
\newtheorem{thm}[theorem]{Theorem}

\newtheorem{cor}[theorem]{Corollary}

\newtheorem{rmk}[theorem]{Remark}

\makeatletter
\usepackage{amsmath}
\usepackage{amsfonts}
\usepackage{indentfirst}
\usepackage{esint}
\usepackage{latexsym}
\usepackage{color}
\UseRawInputEncoding
\usepackage{amsthm}
\usepackage{amssymb}
\usepackage{enumerate}
\usepackage{ulem}
\textheight=8.8in \textwidth=6.28in
\topmargin=0mm \oddsidemargin=0mm
\evensidemargin=0mm
\makeatletter

\newcommand{\Rmnum}[1]{\expandafter\@slowromancap\romannumeral #1@}
\makeatother
\begin{document}
\title{
Parabolic Homogenization with an Interface}
\author{Yiping Zhang\thanks{Yanqi Lake Beijing Institute of Mathematical Sciences and Applications, Beijing 101408, China and  Yau Mathematical Sciences Center, Tsinghua University, Beijing 100084, China, (zhangyiping161@mails.ucas.ac.cn).}}
\date{}
\maketitle
\begin{abstract}
This paper considers a family of second-order parabolic equations in divergence form with rapidly
oscillating and time-dependent periodic coefficients and an interface between two periodic structures (i.e., composed of two different periodic media separated by the interface).
Following a framework initiated by Blanc, Le Bris and Lions and a generalized two-scale expansion in
divergence form of elliptic homogenization with an interface by Josien, we can determine the effective (or
homogenized) equation with the coefficient matrix being piecewise constant and discontinuous across the
interface. Moreover, we obtain the $O(\varepsilon)$ convergence rates in $L^{2(d+2)/{d}}_{x,t}$ (for $d\geq 2$) with $\varepsilon$-smoothing method and the uniform
interior Lipschitz estimates via compactness argument.

\end{abstract}

\section{Introduction and main results}
\let\thefootnote\relax\footnotetext{On behalf of all authors, the corresponding author states that there is no conflict of interest. Data sharing not applicable to this article as no datasets were generated or analyzed during the current study.}
Over the last forty years, there is a vast and rich mathematical literature on elliptic homogenization. Most
of these works are focused on qualitative results, such as proving the existence of a homogenized equation.
However, until recently, nearly all of the quantitative theory, such as the convergence rates in $L^2$ and
$H^1$, the $W^{1,p}$-estimates, the Lipschitz estimates and the asymptotic expansions of the Green functions
and fundamental solutions, were confined in periodic homogenization. There are many good expositions on this
topic, see for instance the books \cite{MR2839402,MR1329546,MR3838419}
for the periodic case, see also the book \cite{MR3932093} for the stochastic case. Moreover, the theory of
parabolic periodic homogenization is also well understood, and we refer to \cite{MR3361284,MR3596717,geng2020asymptotic,MR4072212} and their references therein for more results, see also \cite{geng2021homogenization,xu2020convergence} for the locally periodic coefficients in parabolic homogenization.

In this paper, we investigate the quantitative homogenization results of parabolic periodic equations in divergence
form with rapidly oscillating and time-dependent periodic coefficients and an interface between two periodic
structures, which is initiated by Blanc, Le Bris and Lions \cite{MR3421758} and further developed by Josien \cite{MR3974127} in elliptic
homogenization of divergence form. Since the gradient of the solution $u_0$ to the effective equation is
discontinuous across the interface, the main idea in \cite{MR3974127} is to generalize the two-scale expansion in periodic case without an
interface and recover the regularity of  $\nabla u_0$ after suitable transformation. Inspired by the generalized two-scale expansion in \cite{MR3974127}, we are able to investigate the problem of parabolic homogenization with an interface.

Besides the theory of the divergence form of elliptic equations in homogenization with an interface, Hairer and Manson \cite{MR2653267,MR2789509} have also considered the diffusion process (non-divergence form of elliptic equations) with coefficients that are
periodic outside of an interface region of finite thickness. They investigated the limiting long time/large
scale of such a process under diffusive rescaling and determined the generator of the limiting process, using
the probabilistic methods. Moreover, our another paper \cite{zhang2023quantitative} continues to study the problem above, and in that paper, we can
determine the effective equation, obtain the $O(\varepsilon)$ convergence rates and the uniform interior
Lipschitz estimates.

In this paper, for $0<\varepsilon\leq 1$ and $0<T<\infty$, we consider the following equations:
\begin{equation}
\left\{\begin{aligned}\partial_t u_\varepsilon -\operatorname{div}(A(x/\varepsilon,t/\varepsilon^2)\nabla u_\varepsilon)&=f(x,t)&&\text{in}\quad\mathbb{R}^d\times (0,T),\\
u_\varepsilon(x,0)&=0& &\text{for}\quad x\in\mathbb{R}^d.\end{aligned}\right.
\end{equation}

We will always assume that $A(y,s)=(A_{ij}(y,s))$ with $1\leq i, j\leq d$ is real, symmetric ($A^*=A$), and satisfies the following assumptions:\\

\noindent(i) Interface Assumption. The matrix $A(y,s)$ is of the following form:
\begin{equation}A(y,s)=
\left\{\begin{aligned}
A_+(y,s)\quad \text{if}\quad y_1>0,\\
A_-(y,s)\quad \text{if}\quad y_1<0,
\end{aligned}\right.\end{equation}
where $A_\pm$ are 1-periodic in $(y,s)$ and $A^*_{\pm}=A_\pm$. \\

\noindent(ii) Ellipticity Assumption.  The matrices $A_\pm$ satisfy the following ellipticity condition:
\begin{equation}
\kappa |\xi|^2\leq {A}_{\pm,ij}(y,s)\xi_i\xi_j \leq \kappa^{-1}|\xi|^2 \text{\quad for }(y,s)\in \mathbb{R}^{d+1} \text{ and }\xi\in \mathbb{R}^d,
\end{equation}where $\kappa\in (0,1)$ is a constant.\\

\noindent (iii) Smoothness Assumption. There exists a constant $M>0$ such that
\begin{equation}
|{A}_{\pm}(x,t)-{A}_{\pm}(x,s)|\leq M |t-s|,\end{equation} for any $(x,t,s)\in \mathbb{R}^{d+2}$.

From now on, the summation convention is used throughout the paper, and we also use the following notations:
$$\begin{aligned}
&d\geq 2 \text{ the space dimension};\  \mathbb{T}\cong\mathbb{R}/\mathbb{Z} \text{ the 1-D torus};\ \mathbb{D}=:\mathbb{R}\times \mathbb{T}^{d};\\
&\mathbb{D}_N=:(-N,N)\times \mathbb{T}^{d},\ \mathbb{D}_+=:(0,\infty)\times \mathbb{T}^d,\ \mathbb{D}_-=:(-\infty,0)\times \mathbb{T}^d;\\
&\Omega_T=:\mathbb{R}^d\times (0,T)\text{ for a positive } T\in (0,\infty);\\
&f(y,s) \text{ is } \mathbb{D}\text{-periodic if }f \text{ is 1-periodic in }(y',s) \text{ for }(y,s)=:(y_1,y',s);\\
&f^\varepsilon(x,t)=:f(x/\varepsilon,t/\varepsilon^2);\ \text{we use the subscripts }k,i,j,\ell\text{ if }1\leq k,i,j,\ell\leq d,\\
&\text{and the subscripts }K,I,J\text{ if }1\leq K,I,J\leq d+1 \text{ with } y_{d+1}=:s.\\
\end{aligned}$$
Under the ellipticity condition $(1.3)$, and for any $f\in L^2(0,T;H^{-1}(\mathbb{R}^d))$, it is well known that the Equation $(1.1)$ has a unique weak solution $u_\varepsilon$ in $L^\infty(0,T;L^2(\mathbb{R}^d))\cap L^2(0,T;\dot{H}^{1}(\mathbb{R}^d))$, satisfying the following energy estimates
\begin{equation*}
||u_\varepsilon||_{L^\infty(0,T;L^2(\mathbb{R}^d))}+||\nabla u_\varepsilon||_{L^2(\Omega_T)}\leq C||f||_{L^2(0,T;H^{-1}(\mathbb{R}^d))}.
\end{equation*}

\noindent Moreover, by embedding inequality, there holds
\begin{equation}
||u_\varepsilon||_{L^{\frac{2(d+2)}{d}}(\Omega_T)}\leq C||u_\varepsilon||_{L^\infty(0,T;L^2(\mathbb{R}^d))\cap L^2(0,T;\dot{H}^{1}(\mathbb{R}^d)) }\leq C||f||_{L^2(0,T;H^{-1}(\mathbb{R}^d))}.
\end{equation}

\noindent Similarly, if $f\in L^2(0,T;L^2(\mathbb{R}^d))$, then $u_\varepsilon$ satisfies the following estimates
\begin{equation}
||u_\varepsilon||_{L^\infty(0,T;L^2(\mathbb{R}^d))}+||\nabla u_\varepsilon||_{L^2(\Omega_T)}\leq CT^{1/2}||f||_{L^2(0,T;L^2(\mathbb{R}^d))}.
\end{equation}

Inspired by the classical periodic parabolic homogenization theory by Geng and Shen \cite{geng2020asymptotic,MR3361284,MR3596717,MR4072212} and the work considering the elliptic homogenization with an interface by Josien \cite{MR3974127}, one may expect the homogenized equation of $(1.1)$ is given by:
\begin{equation}\left\{\begin{aligned}
\partial_t u_0 -\operatorname{div}(\widehat{A}\nabla u_0)&=f(x,t)&\quad &\text{in}\quad\mathbb{R}^d\times (0,T),\\
u_0(x,0)&=0& \quad &\text{for}\quad x\in\mathbb{R}^d,
\end{aligned}\right.\end{equation}where the homogenized coefficient matrix $\widehat{A}$ is defined by
\begin{equation}\widehat{A}(y)=
\left\{\begin{aligned}
\widehat{A_+}\quad \text{if}\quad y_1>0,\\
\widehat{A_-}\quad \text{if}\quad y_1<0,
\end{aligned}\right.\end{equation}
and $\widehat{A_\pm}$ are the homogenized matrices associated with the periodic matrices $A_\pm$ (see Section 2 for the details).

In general, the matrix $\widehat{A}$ is discontinuous across the interface $\mathcal{I}=:\{y|y_1=0\}$. However, if there is no interface assumption in the mathematical setting, then the matrix $\widehat{A}$ is a constant matrix satisfying the ellipticity
condition $(1.3)$, and $u_0$ enjoys better regularity estimates than $u_\varepsilon$. Therefore, we need first to recover
the regularity of $u_0$ after suitable transformation across the interface $\mathcal{I}$, which is the main idea to
introduce the concept of $\widehat{A}$-harmonic functions $P_j(x)$, see Section 2 for more details.

After introducing the $\widehat{A}$-harmonic functions $P_j(x)$, we need to give the  definition of correctors suitably, and to investigate the properties of these correctors, especially for the decay estimates, which are used to prove the $H$-convergence and to introduce the so-called dual correctors \cite[Lemma 2.1]{MR3596717}.

After obtaining the suitable properties of these correctors, we are ready to obtain the
$O(\varepsilon)$-convergence rates, using the so-called $\varepsilon$-smoothing method. As pointed out early, we need
to recover the regularity of $u_0$ across the interface with suitable transformation, which is also piecewise
constant across the interface. Unfortunately, as a consequence of this transformation, some similar and beautiful
results do not hold anymore, see Remark 3.6 for the details. Therefore, we need more regularity assumptions on the source term $f$ and the coefficient matrix $A$ to
obtain the $O(\varepsilon)$-convergence rates, which may be released by another useful observations and different methods.

Moreover, we also investigate the interior Lipschitz estimates via compactness argument \cite{MR910954,MR978702}. Due to the classical parabolic theory, it is easy to
see that if $\widehat{A}$ is a constant matrix satisfying the ellipticity condition $(1.3)$, then $u_0$ enjoys better regularity for $f$ sufficiently regular,
which implies that, due to compactness, $u_\varepsilon$ inherits the medium-scale regularity property of the solution $u_0$. Back to
our setting with an interface, due to the $H$-convergence, we need also to find a suitable regularity of $u_0$ that $u_\varepsilon$ can
inherit for small $\varepsilon$. Then, following the ideas of \cite[Theorem 5.1]{MR3361284} and \cite[Theorem 4.1]{MR3974127}, we can also obtain the
uniform interior Lipschitz estimates.

Precisely, we have obtained the following results. The first one concerns the uniform $H$-convergence:
\begin{thm}[uniform $H$-convergence]
Let $\Omega$ be a bounded Lipschitz domain in $\mathbb{R}^d$, and  let $f\in L^2(T_0,T_1;W^{-1,2}(\Omega))$ and sequences $(x_k,t_k)\in \mathbb{R}^{d+1}$ and $\varepsilon_k\in \mathbb{R}^*_+$ satisfy $x_k\cdot e_1\rightarrow x_{0,1}\in \mathbb{R}$ and $\varepsilon_k\rightarrow 0$. Moreover, let $u_k\in L^2(T_0,T_1;W^{1,2}(\Omega))$ be a weak solution of
\begin{equation}
\partial_t u_k -\operatorname{div}\left[A_k((x-x_k)/\varepsilon_k,(t-t_k)/\varepsilon_k^2)\nabla u_k\right]=f\quad \text{in }\Omega\times (T_0,T_1),
\end{equation}
where the matrix $A_k(y,s)$ satisfies $(1.2)$ and $(1.3)$. Suppose that $\widehat{A_k}\rightarrow A^0$, with $\widehat{A_k}$ defined in $(1.8)$, and
$$\left\{\begin{aligned}
&u_k\rightharpoonup u\quad \quad\text{ weakly in }L^2(T_0,T_1;L^2(\Omega)),\\
&\nabla u_k\rightharpoonup \nabla u\quad \text{ weakly in }L^2(T_0,T_1;L^2(\Omega)).
\end{aligned}\right.$$
Then, $u\in L^2(T_0,T_1;W^{1,2}(\Omega))$ is a weak solution of
$$\partial_t u-\operatorname{div}\left({A^0}(x-x_{0,1}e_1)\nabla u\right)=f \quad \text{ in }\Omega\times (T_0,T_1),$$
where $e_1=(1,0,\cdots,0)$ and the matrix ${A^0}$ is piecewise constant and satisfies the ellipticity condition $(1.3)$ with some different $\kappa^{-1}$.
\end{thm}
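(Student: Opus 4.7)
The approach is Tartar's method of oscillating test functions, suitably adapted to the parabolic setting with an interface along the lines of Josien's elliptic argument. By ellipticity the fluxes $\sigma_k := A_k((x-x_k)/\varepsilon_k,(t-t_k)/\varepsilon_k^2)\nabla u_k$ are bounded in $L^2(\Omega\times(T_0,T_1))$, so after extracting a subsequence $\sigma_k\rightharpoonup\sigma$ weakly. Passing to the limit in the weak formulation of (1.9) then yields $\partial_t u - \operatorname{div}\sigma = f$ in the sense of distributions, and the whole task reduces to identifying $\sigma = A^0(x-x_{0,1}e_1)\nabla u$, after which uniqueness of the limiting problem removes the extraction.

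The test functions are built out of the $\widehat{A_k}$-harmonic functions $P_{k,j}$ of Section~2 (continuous across $\{y_1=0\}$ with piecewise constant gradient) and the associated parabolic correctors $\chi^*_{k,j}$ for the adjoint operator, which by the symmetry $A_k^*=A_k$ reduce to time-reversed forward correctors. For each $j=1,\dots,d$ I would set
\[
\varphi_{k,j}(x,t) := P_{k,j}\bigl(x-(x_k\cdot e_1)e_1\bigr) + \varepsilon_k\,\chi^*_{k,j}\!\left(\tfrac{x-x_k}{\varepsilon_k},\tfrac{t-t_k}{\varepsilon_k^2}\right),
\]
and verify, using the decay estimates on $\chi^*_{k,j}$ from Section~2 together with $\widehat{A_k}\to A^0$, $x_k\cdot e_1\to x_{0,1}$, and a further extraction to handle the possibly non-convergent shifts in the periodic variables: (a) $\varphi_{k,j}\to P_{0,j}(\cdot - x_{0,1}e_1)=:P^0_j$ strongly in $L^2$; (b) $\nabla\varphi_{k,j}\rightharpoonup\nabla P^0_j$ weakly in $L^2$; (c) $A_k^*((x-x_k)/\varepsilon_k,(t-t_k)/\varepsilon_k^2)\nabla\varphi_{k,j}\rightharpoonup A^0(\cdot-x_{0,1}e_1)\nabla P^0_j$ weakly in $L^2$; and (d) $-\partial_t\varphi_{k,j}-\operatorname{div}(A_k^*(\cdots)\nabla\varphi_{k,j})\to 0$ in an appropriate negative Sobolev space.

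With these test functions in hand, a standard compensated compactness computation closes the proof. For $\psi\in C_c^\infty(\Omega\times(T_0,T_1))$, pair the equation for $u_k$ against $\varphi_{k,j}\psi$ and the approximate adjoint equation for $\varphi_{k,j}$ against $u_k\psi$, then subtract and integrate by parts in $t$ and $x$. The parabolic time terms combine into $\partial_t(u_k\varphi_{k,j})$ and vanish in the limit via the weak convergence of $u_k$ and property (d), while the spatial bilinear pairings, via (b), (c) and the weak convergence of $\sigma_k$ and $\nabla u_k$, yield in the limit
\[
\int \sigma\cdot\nabla P^0_j\,\psi\,dx\,dt = \int A^0(\cdot-x_{0,1}e_1)\nabla P^0_j\cdot\nabla u\,\psi\,dx\,dt.
\]
Since $\{\nabla P^0_j\}_{j=1}^d$ spans $\mathbb{R}^d$ almost everywhere on each side of $\{x_1=x_{0,1}\}$, this identifies $\sigma = A^0(\cdot-x_{0,1}e_1)\nabla u$.

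The main obstacle is the interface itself. In the standard periodic setting one has $P_j=x_j$ and the correctors are globally bounded periodic functions with mean zero, so (a)--(d) are almost immediate. Here the correctors absorb the jump of $\nabla P_{k,j}$ across the interface and only decay algebraically away from $\{y_1=0\}$, so both the strong convergence (a) and the weak flux convergence (c) rely on the delicate estimates of Section~2 and a careful treatment of the contribution from a thin neighborhood of the moving interface $\{x_1=x_k\cdot e_1\}$. A secondary difficulty is the absence of convergence assumptions on $t_k$ and on the transverse components of $x_k$; by periodicity in $(y',s)$ these shifts act by translation on the correctors and are handled by a subsequence-extraction argument that leaves $\widehat{A_k}$ (and hence the homogenized flux) unchanged.
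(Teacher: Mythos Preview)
Your plan is essentially the same as the paper's: Tartar's oscillating test functions built from $P_{k,j}+\varepsilon_k\chi_{k,j}^\varepsilon$, the key step being the weak flux convergence (c), after which the compensated compactness argument is the standard one from \cite[Theorem~3.6]{MR3361284}. Two remarks on the execution, however.

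First, the corrector decay is not algebraic but exponential (Theorem~2.4), so your description of the ``main obstacle'' overstates the difficulty. More importantly, the paper does not use any pointwise decay at all for (c): it uses only the $L^2$ bound (2.9), namely $\nabla(\chi_{k,j}-\chi_{-,k,j})\in L^2(\mathbb{D}_-)$ and $\nabla(\chi_{k,j}-\chi_{+,k,\ell}\partial_\ell P_{k,j})\in L^2(\mathbb{D}_+)$. Splitting $\Omega$ into $\Omega_\pm=\{x\in\Omega:\pm x_1>0\}$, one writes on $\Omega_-$
\[
A_k^{\varepsilon_k}\bigl(\nabla P_{k,j}+\nabla_y\chi_{k,j}^{\varepsilon_k}\bigr)
= A_{-,k}^{\varepsilon_k}\bigl(\nabla_y\chi_{k,j}^{\varepsilon_k}-\nabla_y\chi_{-,k,j}^{\varepsilon_k}\bigr)
+ A_{-,k}^{\varepsilon_k}\bigl(\nabla P_{k,j}+\nabla_y\chi_{-,k,j}^{\varepsilon_k}\bigr).
\]
The first term is $O(\varepsilon_k)$ in $L^2(\Omega_-\times(T_0,T_1))$ directly from (2.9) by rescaling, and the second is a purely periodic flux which converges weakly to $\widehat{A_{-,k}}\nabla P_{k,j}\to A^0_-\nabla P^0_j$ by the standard lemma. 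So there is no ``thin neighborhood of the moving interface'' to isolate: the interface contribution is absorbed entirely by the $L^2$ estimate on the corrector difference. The argument on $\Omega_+$ is symmetric.

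Second, your handling of the shifts $(x_k,t_k)$ is fine but also slightly heavier than needed. The paper simply normalizes to $x_k=0$, $t_k=0$; the periodic translations in $(y',s)$ do not alter $\widehat{A_k}$ and are already covered by the uniform periodic $H$-convergence lemma it cites.
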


The next one concerns the $O(\varepsilon)$-convergence rates in $L^{\frac{2(d+2)}{d}}(\Omega_T)$:
\begin{thm}[convergence rates] Suppose that the matrix $A$ satisfies $(1.2)$-$(1.4)$. Let $f,\partial_t f\in L^2(\Omega_T)$, and $u_\varepsilon$, $u_0$ be the weak solution to the Equations $(1.1)$ and $(1.7)$, respectively. Then there hold  the following convergence rates:
\begin{equation*}\begin{aligned}
||u_\varepsilon-u_0||_{L^{\frac{2(d+2)}{d}}(\Omega_T)}\leq C\varepsilon ||f||_{L^2(\Omega_T)\cap L^\infty (0,T;L^2(\mathbb{R}^d))}+C\varepsilon^2||f_t||_{L^2(\Omega_T)},
\end{aligned}\end{equation*}
where the constant $C$ depends only on $d$, $M$, $\kappa$ and $T$.
\end{thm}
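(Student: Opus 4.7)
The plan is to construct an $\varepsilon$-smoothed two-scale expansion adapted to the interface, derive a parabolic equation for the resulting remainder $w_\varepsilon$, and convert the corresponding energy estimate into an $L^{2(d+2)/d}$ bound via the embedding (1.5). Since $\nabla u_0$ is only piecewise smooth and jumps across $\mathcal{I}$, the naive ansatz $u_0+\varepsilon\chi_k^\varepsilon\partial_k u_0$ is not admissible; following the idea from Josien described in the introduction, I would compose $u_0$ with the $\widehat{A}$-harmonic functions $P_j$ to obtain an auxiliary quantity $v_0$ whose gradient is continuous across $\mathcal{I}$ and whose second spatial and first time derivatives can be controlled in $L^2(\Omega_T)$ by $||f||_{L^2}$, $||f||_{L^\infty L^2}$ and $||f_t||_{L^2}$ through parabolic regularity applied to (1.7) and to its time-differentiated counterpart.

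First I would introduce a parabolic Steklov-type smoothing $S_\varepsilon$ (with scale $\varepsilon$ in space and $\varepsilon^2$ in time) and set
\begin{equation*}
w_\varepsilon(x,t)=u_\varepsilon(x,t)-u_0(x,t)-\varepsilon\,\chi_k^\varepsilon(x,t)\,S_\varepsilon(\Phi_k)(x,t),
\end{equation*}
where $\Phi_k$ is the appropriate regularised gradient of $u_0$ built from $v_0$ and the $P_j$'s. Using the cell equations for $\chi_k$ together with the dual correctors furnished by Section~2, I would rewrite the equation for $w_\varepsilon$ in the form $\partial_t w_\varepsilon-\operatorname{div}(A^\varepsilon\nabla w_\varepsilon)=\operatorname{div}(F_\varepsilon)+G_\varepsilon$, where $F_\varepsilon$ gathers the genuine oscillating products of $\varepsilon$-correctors with smoothed derivatives, together with smoothing errors of type $\Phi_k-S_\varepsilon\Phi_k$, and $G_\varepsilon$ absorbs the corresponding time-derivative terms. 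Since $w_\varepsilon(\cdot,0)=0$, the standard energy estimate combined with the embedding (1.5) yields
\begin{equation*}
||w_\varepsilon||_{L^{2(d+2)/d}(\Omega_T)}\le C\,||F_\varepsilon||_{L^2(\Omega_T)}+C\,||G_\varepsilon||_{L^2(0,T;H^{-1}(\mathbb{R}^d))}.
\end{equation*}

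Each contribution can then be bounded separately. The oscillating products are $O(\varepsilon)$ in $L^2$ by the boundedness of the correctors and dual correctors established in Section~2. The smoothing errors satisfy the parabolic inequality $||h-S_\varepsilon h||_{L^2}\le C\varepsilon\,||\nabla h||_{L^2}+C\varepsilon^2\,||\partial_t h||_{L^2}$, which combined with the regularity of $v_0$ produces exactly the $\varepsilon\,||f||_{L^2\cap L^\infty L^2}+\varepsilon^2\,||f_t||_{L^2}$ behaviour (the $L^\infty L^2$ norm entering through the initial datum $\partial_t u_0(\cdot,0)=f(\cdot,0)$ of the time-differentiated equation). Finally, the corrector term $\varepsilon\chi_k^\varepsilon S_\varepsilon(\Phi_k)$ already built into $w_\varepsilon$ is of order $\varepsilon$ in $L^{2(d+2)/d}(\Omega_T)$ by H\"older's inequality and the corrector bounds, so it can be moved back to the other side without worsening the estimate.

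The main obstacle I expect lies in the interface contributions to $F_\varepsilon$: when derivatives fall onto the piecewise structure of $\Phi_k$, or onto $\chi^\varepsilon$ across $\mathcal{I}$, one produces terms concentrated in an $O(\varepsilon)$-neighbourhood of the interface which cannot be bounded by full-domain $H^2$ norms of $u_0$. Controlling them forces one to exploit both the decay estimates for the interface correctors promised in Section~2 and the regularising role of the $\widehat{A}$-harmonic functions $P_j$. This is precisely the difficulty flagged in Remark~3.6 and is what compels the stronger hypothesis $\partial_t f\in L^2$ together with the two distinct powers of $\varepsilon$ in the final estimate.
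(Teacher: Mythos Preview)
Your overall strategy---smoothed two-scale expansion built on $\tilde U_0=(\nabla P)^{-1}\nabla u_0$, energy estimate for the remainder, then the embedding (1.5)---is the paper's. But two ingredients are missing, and the first is a genuine gap. Your ansatz must carry the second-order dual-corrector term: the paper takes $w_\varepsilon = u_\varepsilon - u_0 - \varepsilon\chi_j^\varepsilon S_\varepsilon(\eta\tilde U_{0,j}) - \varepsilon^2 \phi_{(d+1)ij}^\varepsilon\,\partial_i S_\varepsilon(\eta\tilde U_{0,j})$. With only the first-order term, once you invoke $B_{Ij}=\partial_K\phi_{KIj}$ the right-hand side still contains $\varepsilon^2\partial_t\bigl(\phi_{(d+1)ij}^\varepsilon\,\partial_i S_\varepsilon(\tilde U_{0,j})\bigr)$ (the middle term of the paper's (3.18)). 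This is a pure time derivative with no divergence structure, so it does not fit into your $G_\varepsilon\in L^2(0,T;H^{-1})$ framework; testing against $w_\varepsilon$ throws the derivative onto $\partial_t w_\varepsilon$, which is only $H^{-1}$-valued, and the estimate cannot close. The paper absorbs this term by building $\varepsilon^2\phi_{(d+1)ij}^\varepsilon\partial_i S_\varepsilon$ into $w_\varepsilon$ from the outset, so that it is cancelled by $\partial_t w_\varepsilon$ on the left.

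The second missing piece is the temporal cutoff $\eta(t)$ vanishing on $[0,3\varepsilon^2]$ (together with a reflection of $\tilde U_0$ past $t=T$). Since $S_\varepsilon$ smooths in time, $S_\varepsilon(\Phi_k)(\cdot,0)\neq 0$ and your claim $w_\varepsilon(\cdot,0)=0$ fails as written; moreover the smoothing-error bound requires $\partial_t\tilde U_0\in L^2$, which is unavailable uniformly down to $t=0$. The paper treats the initial layer separately via (2.45) on $\|\nabla u_0\|_{L^2(\Omega_{4\varepsilon^2})}$ and the weighted estimate (2.40) on $\|\tilde\eta\,\nabla\partial_t u_0\|_{L^2}$, and it is through these initial-layer bounds---not through the datum $\partial_t u_0(\cdot,0)=f(\cdot,0)$ of the time-differentiated equation---that $\|f\|_{L^\infty(0,T;L^2)}$ actually enters.
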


\begin{rmk}Compared with \cite[Theorem 1.1]{MR3596717}, only the regularity assumption $f\in L^2$ is needed to guarantee the $O(\varepsilon)$ convergence rates. As pointed out early, some similar
results do not hold anymore after  recovering the regularity for $u_0$ across the interface. Then we need more regularity assumptions on the source term $f$ and the coefficient matrix $A$ to
obtain the $O(\varepsilon)$ convergence rates, which may be released. See Remark 3.6 for the details.

Meanwhile, one is inclined to think that having time-dependent coefficients in the parabolic equation may be not very physically relevant. Therefore, the condition $(1.4)$ is naturally satisfied if the coefficients are  time-independent.\end{rmk}

The last one concerns the uniform interior Lipschitz estimates:

\begin{thm}[interior Lipschitz estimates] Let $A=A(y,s)$ be a matrix satisfying $(1.2)$-$(1.3)$ and $A_{\pm}\in C^{\alpha,\alpha/2}(\mathbb{R}^{d+1})$ for some $\alpha>0$. Let $u_\varepsilon\in L^2(t_0-r^2,t_0;H^1(B(x_0,r)))$ be a weak solution of $\partial_t u_\varepsilon-\operatorname{div}(A^\varepsilon \nabla u_\varepsilon)=f$ in $2Q$ with $Q=Q_r(x_0,t_0)=:B(x_0,r)\times (t_0-r^2,t_0)$ and $f\in L^p(2Q)$ for some $p>d+2$. Then
\begin{equation}
||u_\varepsilon||_{C^{1,1/2}}(Q)\leq \frac{C}{r}\left(\fint_{2Q}|u_\varepsilon|^2\right)^{1/2}+Cr\left(\fint_{2Q}|f|^p\right)^{1/p},
\end{equation}
where the constant $C$ depends only on $d$, $\kappa$, $\alpha$ and $p$.
\end{thm}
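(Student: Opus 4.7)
The plan is to follow the compactness argument of Avellaneda–Lin, adapted to the interface setting exactly as in \cite[Theorem 5.1]{MR3361284} and \cite[Theorem 4.1]{MR3974127}. The central difficulty with the interface is that one cannot approximate $u_\varepsilon$ by an affine function at small scales, because the natural ``affine replacement'' for the homogenized equation with the discontinuous matrix $\widehat{A}$ is a combination of the $\widehat{A}$-harmonic functions $P_j(x)$ introduced in Section~2. These play the role of linear functions: they are globally Lipschitz, continuous across $\mathcal{I}$, and have piecewise-constant gradient, so any quantity built from them is the correct ``tangent object'' in the sense of homogenization with an interface.

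First, I would establish a one-step improvement lemma of the following form: there exist constants $\theta\in(0,1/4)$ and $\varepsilon_0>0$, depending only on $d,\kappa,\alpha$, such that if $u_\varepsilon$ is a weak solution of $\partial_t u_\varepsilon-\operatorname{div}(A^\varepsilon\nabla u_\varepsilon)=f$ in $2Q$ with
\[
\Bigl(\fint_{2Q}|u_\varepsilon|^2\Bigr)^{1/2}\leq 1,\qquad \Bigl(\fint_{2Q}|f|^p\Bigr)^{1/p}\leq \delta,
\]
and $0<\varepsilon<\varepsilon_0 r$, then there is an $\widehat{A}$-harmonic function $P$ of the form $P(x)=\alpha_0+\alpha_jP_j(x)$ with $|\alpha_0|+|\alpha_j|\leq C$ such that
\[
\Bigl(\fint_{\theta Q}|u_\varepsilon-P|^2\Bigr)^{1/2}\leq \theta^{1+\sigma}\Bigl\{\Bigl(\fint_{2Q}|u_\varepsilon|^2\Bigr)^{1/2}+\Bigl(\fint_{2Q}|f|^p\Bigr)^{1/p}\Bigr\}
\]
for some $\sigma\in(1/2,1)$. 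The proof is by contradiction: assume failure along a sequence $\varepsilon_k\to0$; then by the energy estimates a subsequence converges in $L^2_{\mathrm{loc}}$, and by Theorem~1.1 (uniform $H$-convergence) the limit $u_*$ solves $\partial_t u_*-\operatorname{div}(A^0\nabla u_*)=0$ in a smaller subcylinder, with $A^0$ piecewise constant across $\mathcal{I}$. The key point is that for such a limiting equation, one has $C^{1,1/2}$ interior regularity after correcting by the $\widehat{A}$-harmonic functions, so $u_*$ is close to an $\widehat{A}$-harmonic $P$ at scale $\theta$ with the right decay, contradicting the assumption.

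Second, I would iterate this one-step lemma. Define $Q_k=\theta^k Q$ and, after subtracting off the approximating $P_k$ at each step and rescaling to a ball of radius one, I obtain a sequence $(\alpha_0^{(k)},\alpha^{(k)})$ of coefficients whose increments are geometric in $\theta^{\sigma k}$. The iteration runs as long as the rescaled parameter $\varepsilon/(\theta^k r)$ remains $<\varepsilon_0$; this provides the Lipschitz estimate down to the scale $\varepsilon$. At scales below $\varepsilon$, I use the rescaling $w(x,t)=u_\varepsilon(\varepsilon x+x_0,\varepsilon^2 t+t_0)$, which satisfies an equation with the (fixed) Hölder matrix $A_\pm$; then the classical Schauder/interior parabolic $C^{1,1/2}$ theory (applied either in the interior of $\{y_1>0\}$ or $\{y_1<0\}$, or across the interface by a piecewise Schauder argument using $A_\pm\in C^{\alpha,\alpha/2}$) handles the remaining scales. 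Combining the two regimes yields the estimate \eqref{1.10}, after absorbing $\|u_\varepsilon\|$ and $f$ norms through a standard Campanato–Morrey characterization of $C^{1,1/2}$.

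The main obstacle is the one-step improvement lemma when the cylinder $Q$ sits astride the interface $\mathcal{I}$. In that regime, the naive $C^{1,\alpha}$ theory is not available for the limit equation, so one must genuinely rely on the Josien-type transformation via the $P_j$'s and on the fact that $u_*$, though only Lipschitz in Cartesian coordinates, is $C^{1,1/2}$ once expressed in terms of the $P_j$'s. A secondary technical issue is controlling the $L^p$ term in $f$ uniformly under rescaling; since $p>d+2$, the scaling factor $\varepsilon^{2-(d+2)/p}$ is a positive power of $\varepsilon$, which is what allows $f$ to be absorbed into the iteration without worsening the exponent. Once these two ingredients are in place, the remainder is a bookkeeping of the geometric series in the iteration.
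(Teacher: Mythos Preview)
Your proposal is correct and follows essentially the same compactness scheme the paper sketches by reference to \cite[Theorem 5.1]{MR3361284} and \cite[Theorem 4.1]{MR3974127}; the one-step improvement via $H$-convergence (Theorem~1.1) together with approximation by $\widehat{A}$-harmonic functions $P_j$ is exactly what the paper has in mind, and Theorem~4.1 of the paper supplies the limiting $C^{1+\alpha}$-type regularity you invoke for $u_*$. The one structural difference is the treatment of the inhomogeneous term: you carry $f$ through the iteration directly, exploiting the favorable scaling from $p>d+2$, and conclude via Campanato--Morrey, whereas the paper first proves the homogeneous estimate, upgrades it to gradient bounds on the fundamental solution $\Gamma_\varepsilon$, and then recovers the inhomogeneous case by representing $u_\varepsilon$ through $\Gamma_\varepsilon$. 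Both routes are standard; yours is slightly more self-contained, while the paper's yields fundamental-solution estimates as a byproduct. One point you should make explicit rather than leave as a ``piecewise Schauder argument'': at scales $\leq\varepsilon$ when the rescaled cylinder straddles the interface, classical Schauder does not apply directly because $A$ is discontinuous across $\mathcal{I}$, and the paper invokes \cite[Theorem~1.4]{MR3714566} (gradient estimates for parabolic systems from composite material) for this step.
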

In Theorem 1.4, we have used the notations
\begin{equation}
\begin{aligned}
\fint_{Q} u &=\frac{1}{|Q|} \int_{Q} u;\quad \|u\|_{C^{\alpha, \alpha / 2}(Q)} &=\sup _{\substack{(x, t),(y, s) \in Q \\
(x, t) \neq(y, s)}} \frac{|u(x, t)-u(y, s)|}{\left(|x-y|+|t-s|^{1 / 2}\right)^{\alpha}},0<\alpha \leq 1.
\end{aligned}
\end{equation}

\section{Preliminaries}
\subsection{$\widehat{A}$-harmonic functions}
Inspired by the works \cite{MR3421758,MR3974127}, we introduce the $\widehat{A}$-harmonic functions, which is spanned by the constants functions and the following piecewise linear functions:
\begin{equation}P_j(x)=P(x)\cdot e_j=:\left\{\begin{aligned}&x_j,&\text{ if } x_1<0,\\
&x_j+\vartheta_jx_1, &\text{ if } x_1>0,
\end{aligned}\right.\end{equation}
for $j=1,\cdots,d$, where $e_j=(0,\cdots,0,1,0,\cdots,0)$ with 1 in the j-th position and  $\vartheta=(\vartheta_1,\cdots,\vartheta_j)$ is related to the transmission matrix $\widehat{A}$ across the interface $\mathcal{I}$ and defined as:
\begin{equation}
\vartheta_j=\frac{\left(\widehat{A_-}\right)_{1j}-\left(\widehat{A_+}\right)_{1j}}{\left(\widehat{A_+}\right)_{11}}.
\end{equation}

If $\vartheta=0$, then $\widehat{A}$ is constant and the functions $P_j$ are linear.

It is straightforward to check that the functions $P_j$ are solutions to
\begin{equation}\operatorname{div}(\widehat{A}\nabla P_j)=0\quad \text{in}\quad \mathbb{R}^d.\end{equation}

Actually, by definition, the functions $P_j$ are Lipschitz continuous and their gradients read as
\begin{equation*}\nabla P_j(x)=\left\{\begin{aligned}
&e_j, &\text{ if } x_1<0,\\
&e_j+\vartheta_je_1, &\text{ if } x_1>0.
\end{aligned}\right.\end{equation*}

Hence, the functions $P_j$ are $\widehat{A}$-harmonic in $\mathbb{R}^*_-\times \mathbb{R}^{d-1}$ and in $\mathbb{R}^*_+\times \mathbb{R}^{d-1}$, and they satisfy the transmission conditions across the interface:
\begin{equation}
\begin{gathered}
\lim _{h \rightarrow 0^{+}}\left[\left(\widehat{A} \cdot \nabla P_{j}\right)\left(x+h e_{1}\right)\right] \cdot e_{1}=\lim _{h \rightarrow 0^{+}}\left[\left(\widehat{A} \cdot \nabla P_{j}\right)\left(x-h e_{1}\right)\right] \cdot e_{1}, \\
\lim _{h \rightarrow 0^{+}} \partial_{k} P_{j}\left(x+h e_{1}\right)=\lim _{h \rightarrow 0^{+}} \partial_{k} P_{j}\left(x-h e_{1}\right),
\end{gathered}
\end{equation}
for all $x\in\mathcal{I}$ and $k=2,\cdots,d$.

\subsection{Basic estimates of the correctors}
Since the correctors are meant to turn the $\widehat{A}$-harmonic functions $P_j$ into $A$-harmonic sub-linear functions, the correctors $\chi_k$  should solve the following equation:
\begin{equation}
\partial_s\chi_k-\partial_{y_i}\left(A_{ij}(y,s)\partial_{y_j}(P_k(y)+\chi_k)\right)=0 \text{ in }\mathbb{D},\ \chi_k\text{ is }\mathbb{D}\text{-periodic},
\end{equation}
for $k=1,2,\cdots, d$.

Recall that for the periodic matrices $A_\pm$, the corrector $\chi_{\pm,j}$ are defined as
the weak solution to the following cell problem:
\begin{equation}\left\{\begin{array}{l}
\left(\partial_{s}+\mathcal{L}_{\pm,1}\right)\left(\chi_{\pm,j}\right)=-\mathcal{L}_{\pm,1}\left(y_j\right) \quad \text { in }\quad \mathbb{T}^{d+1}, \\
\chi_{\pm,j}(y, s) \text { is } 1 \text {-periodic in }(y, s), \ \
\int_{\mathbb{T}^{d+1}} \chi_{\pm,j}=0.
\end{array}\right.\end{equation}
where $j=1,\cdots,d$, $\mathcal{L}_{\pm,\varepsilon}=-\operatorname{div}(A_{\pm}^\varepsilon\cdot \nabla)$ and $\mathcal{L}_{\varepsilon}=-\operatorname{div}(A^\varepsilon\cdot \nabla)$. Then the effective operator $\widehat{A_{\pm}}$ \cite{MR3596717} is given by
$$\widehat{A_{\pm}}=\fint_{\mathbb{T}^{d+1}}\left(A_{\pm}+A_{\pm}\nabla \chi_{\pm}\right).$$

For the correctors and dual correctors in parabolic homogenization without an interface, the following results hold true.
\begin{lemma}
Let $1\leq j\leq d$ and assume that $A_{\pm}$ are 1-periodic matrices and satisfy the ellipticity condition $(1.3)$. Then there exist 1-periodic functions $\phi_{\pm,KIj}(y,s)$ in $\mathbb{R}^{d+1}$ such that $\phi_{\pm,KIj}\in H^1(\mathbb{T}^{d+1})$,
\begin{equation}
B_{\pm,Ij}=\partial_{y_K}\phi_{\pm,KIj}\quad\text{and}\quad \phi_{\pm,KIj}=-\phi_{\pm,IKj},
\end{equation}
where $1\leq K, I\leq d+1$, $B_{\pm,ij}=A_{\pm,ij}+A_{\pm,ik}\partial_{y_k}\chi_{\pm,j}-\widehat{A}_{\pm,ij}$  for $i,j=1,\cdots,d$ and $B_{\pm,(d+1)j}=-\chi_{\pm,j}$, and we use the notation $y_{d+1}=:s$. Moreover, if $\partial_s A_{\pm}\in L^\infty(\mathbb{R}^{d+1})$, then $\partial_s\phi_{\pm}\in H^1(\mathbb{T}^{d+1})$.
\end{lemma}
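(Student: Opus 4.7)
This is the parabolic analogue of the standard flux/stream corrector construction. The plan is to introduce the $(d+1)$-component vector field $B_{\pm,\cdot j}=(B_{\pm,1j},\dots,B_{\pm,(d+1)j})$ as in the statement, verify that it is divergence-free and mean-zero on the space-time torus $\mathbb{T}^{d+1}$, and then invert a Laplace equation to produce an antisymmetric potential.

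\textbf{Step 1: divergence-free and mean-zero.} First I would check that $\partial_{y_I}B_{\pm,Ij}=0$ on $\mathbb{T}^{d+1}$, where $y_{d+1}=s$. Writing out the sum, this reads
\begin{equation*}
\partial_{y_i}\bigl(A_{\pm,ij}+A_{\pm,ik}\partial_{y_k}\chi_{\pm,j}-\widehat{A}_{\pm,ij}\bigr)+\partial_s(-\chi_{\pm,j})=0,
\end{equation*}
which is exactly the corrector equation (2.6) (using that $\widehat{A}_{\pm}$ is constant). For the mean values, $\fint B_{\pm,ij}=0$ by the very definition of $\widehat{A}_\pm$, while $\fint B_{\pm,(d+1)j}=-\fint\chi_{\pm,j}=0$ by the normalization in (2.6).

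\textbf{Step 2: potential via Poisson equation.} For each pair $(I,j)$ I would solve, on $\mathbb{T}^{d+1}$, the Poisson equation $\Delta_{y,s}f_{\pm,Ij}=B_{\pm,Ij}$ with $\fint f_{\pm,Ij}=0$; this is well-posed since $B_{\pm,Ij}\in L^2(\mathbb{T}^{d+1})$ has zero mean, and yields $f_{\pm,Ij}\in H^2(\mathbb{T}^{d+1})$. Then set
\begin{equation*}
\phi_{\pm,KIj}:=\partial_{y_K}f_{\pm,Ij}-\partial_{y_I}f_{\pm,Kj},
\end{equation*}
which is manifestly antisymmetric in $(K,I)$ and lies in $H^1(\mathbb{T}^{d+1})$. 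To verify $\partial_{y_K}\phi_{\pm,KIj}=B_{\pm,Ij}$, I compute
\begin{equation*}
\partial_{y_K}\phi_{\pm,KIj}=\Delta f_{\pm,Ij}-\partial_{y_I}\bigl(\partial_{y_K}f_{\pm,Kj}\bigr)=B_{\pm,Ij}-\partial_{y_I}\bigl(\partial_{y_K}f_{\pm,Kj}\bigr),
\end{equation*}
and the gauge term vanishes because $g:=\partial_{y_K}f_{\pm,Kj}$ satisfies $\Delta g=\partial_{y_K}B_{\pm,Kj}=0$ by Step 1, so periodicity forces $g$ to be constant, and then $\fint g=0$ (integration by parts) forces $g\equiv 0$.

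\textbf{Step 3: the time-regularity addendum.} For the final assertion, assume $\partial_sA_{\pm}\in L^\infty$. Differentiating (2.6) in $s$ shows that $v:=\partial_s\chi_{\pm,j}$ is a periodic mean-zero solution of a parabolic cell problem whose source is controlled by $\partial_sA_{\pm}$ and $\nabla\chi_{\pm,j}$, hence $v\in L^2(\mathbb{T}^1;H^1(\mathbb{T}^d))\cap L^\infty(\mathbb{T}^1;L^2(\mathbb{T}^d))$ by the standard energy estimate. This upgrades $\partial_sB_{\pm,Ij}\in L^2(\mathbb{T}^{d+1})$, so $\partial_sf_{\pm,Ij}\in H^2$ by elliptic regularity applied to $\Delta(\partial_sf_{\pm,Ij})=\partial_sB_{\pm,Ij}$, and consequently $\partial_s\phi_{\pm,KIj}\in H^1(\mathbb{T}^{d+1})$.

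\textbf{Main difficulty.} The construction itself is classical; the delicate point is the regularity bookkeeping, in particular making sure $B_{\pm,(d+1)j}=-\chi_{\pm,j}$ lies in $L^2(\mathbb{T}^{d+1})$ (rather than needing $\partial_s\chi$) so that the $H^1$ conclusion holds without extra hypotheses, and then separating out the time-regularity hypothesis used only for the last statement. Everything else is divergence-free Hodge theory on the torus.
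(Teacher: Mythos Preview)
Your proposal is correct and follows essentially the same approach as the paper's proof: solve the Poisson equation $\Delta_{d+1}f_{\pm,Ij}=B_{\pm,Ij}$ on $\mathbb{T}^{d+1}$, define $\phi_{\pm,KIj}=\partial_{y_K}f_{\pm,Ij}-\partial_{y_I}f_{\pm,Kj}$, and use that $\sum_{K}\partial_{y_K}f_{\pm,Kj}$ is a periodic harmonic function (hence constant) to verify the identity $B_{\pm,Ij}=\partial_{y_K}\phi_{\pm,KIj}$; the time-regularity addendum is handled identically. One cosmetic remark: you go slightly further than needed by concluding $g\equiv 0$ from $\fint g=0$, whereas the paper stops at ``$g$ is constant'', which already suffices since only $\partial_{y_I}g=0$ is required.
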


\begin{proof}The proof is due to \cite[Lemma 2.1]{MR3596717}, and we point out the main ideas for simplicity. Due to $\int_{\mathbb{T}^{d+1}}B_{\pm,Ij}$=0, there exist $f_{\pm,Ij}\in H^2(\mathbb{T}^{d+1})$ solving
\begin{equation}\left\{\begin{aligned}
&\Delta_{d+1}f_{\pm,Ij}=B_{\pm,Ij} \quad \text{ in }\mathbb{T}^{d+1},\\
&f_{\pm,Ij}\text{ is 1-periodic in }(y,s), \fint_{\mathbb{T}^{d+1}}f_{\pm,Ij}=0,
\end{aligned}\right.\end{equation}
where $\Delta_{d+1}$ denotes the Laplacian in $\mathbb{R}^{d+1}$. Then  $\sum_{I=1}^{d+1}\partial_{y_I}f_{\pm,Ij}$ is a constant since it is a periodic harmonic function in $\mathbb{R}^{d+1}$.  By setting
\begin{equation*}\phi_{\pm,KIj}=\partial_{y_K}f_{\pm,Ij}-\partial_{y_I}f_{\pm,Kj},\end{equation*}
we complete the proof of the first part.

Moreover, if $\partial_s A_{\pm}\in L^\infty(\mathbb{R}^{d+1})$, then $\partial_s \nabla_y \chi_{\pm}\in L^2(\mathbb{T}^{d+1})$ and $\partial_s \chi_{\pm}\in L^2(\mathbb{T}^{d+1})$ due to Equation $(2.6)$ and hence $\partial_s B_{\pm}\in L^2(\mathbb{T}^{d+1})$. Consequently, by regularity estimates, $\partial_s f_{\pm}\in H^2(\mathbb{T}^{d+1})$ and hence $\partial_s\phi_{\pm}\in H^1(\mathbb{T}^{d+1})$.
\end{proof}

\begin{thm}Under the assumptions $(1.2)$ and $(1.3)$,
there exists a unique (up to the addition of a constant) $\mathbb{D}$-periodic corrector $\chi_j$ to the Equation $(2.5)$, satisfying
\begin{equation}\left\{\begin{aligned}
&\nabla (\chi_j-\chi_{-,j})\in L^2(\mathbb{D}_-),\\
&\nabla (\chi_j-\chi_{+,j}-\vartheta_j\chi_{+,1})\in L^2(\mathbb{D}_+).
\end{aligned}\right.\end{equation}
\end{thm}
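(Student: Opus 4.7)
The plan is to construct $\chi_j=\tilde{\chi}_j+w_j$, where $\tilde{\chi}_j$ is an explicit \emph{far-field ansatz} built from the single-phase correctors $\chi_{\pm,j}$ and $\chi_{+,1}$, and $w_j$ is an interface correction with $\nabla w_j\in L^2(\mathbb{D})$. Fix a smooth cutoff $\eta:\mathbb{R}\to[0,1]$ with $\eta\equiv 0$ on $\{y_1\le-1\}$ and $\eta\equiv 1$ on $\{y_1\ge 1\}$, and set
\begin{equation*}
\tilde{\chi}_j(y,s):=(1-\eta(y_1))\,\chi_{-,j}(y,s)+\eta(y_1)\bigl(\chi_{+,j}(y,s)+\vartheta_j\,\chi_{+,1}(y,s)\bigr).
\end{equation*}
By $(2.6)$ and the identity $\nabla P_j=e_j+\vartheta_j e_1$ on $\{y_1>0\}$, $P_j+\tilde{\chi}_j$ satisfies $(2.5)$ exactly on $\{|y_1|>1\}$. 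Hence the defect $F_j:=(\partial_s-\operatorname{div}(A\nabla))(P_j+\tilde{\chi}_j)$ is a distribution supported in the slab $\{|y_1|\le 1\}$, consisting of a regular bulk part coming from the cutoff commutators plus a Dirac-type surface term on $\mathcal{I}$ produced by the jumps of $A$ and $\nabla P_j$. The problem then reduces to finding a $\mathbb{D}$-periodic $w_j$ with $\nabla w_j\in L^2(\mathbb{D})$ solving
\begin{equation*}
\partial_s w_j-\operatorname{div}(A\,\nabla w_j)=F_j\quad\text{in }\mathbb{D},
\end{equation*}
and the asymptotic matching $(2.9)$ is an automatic consequence of this integrability.

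I would construct $w_j$ by a truncation-and-limit procedure. On the finite slab $\mathbb{D}_N=(-N,N)\times\mathbb{T}^d$ with homogeneous Dirichlet data at $\{y_1=\pm N\}$ (and the $\mathbb{T}$-periodicity in $s$ encoded by $\mathbb{D}_N$), Lions' theorem for time-periodic parabolic Dirichlet problems provides a unique weak solution $w_j^{(N)}$ with $\nabla w_j^{(N)}\in L^2(\mathbb{D}_N)$. Testing against $w_j^{(N)}$ itself and using $\mathbb{T}$-periodicity in $s$ to eliminate $\int\partial_s w_j^{(N)}\cdot w_j^{(N)}$, ellipticity reduces matters to an $N$-uniform bound on $\langle F_j,w_j^{(N)}\rangle$. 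Here Lemma~2.1 is essential: the spatial divergence-form pieces of $F_j$ are rewritten via $B_{\pm,Ij}=\partial_{y_K}\phi_{\pm,KIj}$ and the time-derivative pieces via $B_{\pm,(d+1)j}=-\chi_{\pm,j}$, after which a further integration by parts moves derivatives off $w_j^{(N)}$ and onto the cutoff $\eta$ and replaces the singular interface contributions by $L^2$ bulk expressions supported in $\{|y_1|\le 1\}$. The outcome is a bound of the form
\begin{equation*}
|\langle F_j,w_j^{(N)}\rangle|\le C\bigl(\|\phi_\pm\|_{H^1(\mathbb{T}^{d+1})}+\|\chi_\pm\|_{H^1(\mathbb{T}^{d+1})}\bigr)\,\|\nabla w_j^{(N)}\|_{L^2(\{|y_1|\le 2\})},
\end{equation*}
which absorbs into the ellipticity side and gives the $N$-uniform estimate $\|\nabla w_j^{(N)}\|_{L^2(\mathbb{D}_N)}\le C$. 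Extracting a weak $L^2$-limit of $\nabla w_j^{(N)}$ and normalising $w_j$ by its mean on a fixed slice produces $w_j$, and $\chi_j:=\tilde{\chi}_j+w_j$ is the desired corrector.

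Uniqueness up to an additive constant follows from a standard energy argument applied to the difference $w$ of two candidate correctors: $w$ is $\mathbb{D}$-periodic, $A$-caloric on all of $\mathbb{D}$, and satisfies $\nabla w\in L^2(\mathbb{D})$. Testing against $w\,\eta_R^2(y_1)$ for a smooth cutoff $\eta_R=1$ on $\{|y_1|\le R\}$ vanishing on $\{|y_1|\ge R+1\}$, $\mathbb{T}$-periodicity kills the $\partial_s$-contribution and Cauchy--Schwarz yields $\kappa\int\eta_R^2|\nabla w|^2\le C\int_{\{R\le|y_1|\le R+1\}}|w|\,|\nabla w|$; a slab Poincar\'e inequality applied to $w$ minus its slab mean and the $L^2$-gradient assumption force the right-hand side to $0$ as $R\to\infty$, so $\nabla w\equiv 0$ and $w$ is constant.

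The main obstacle is the $N$-uniform energy estimate. It hinges on two structural facts: the divergence-form representation of the flux via the correctors $\phi_{\pm,KIj}$ of Lemma~2.1, and the compatibility built into $(2.2)$, which forces the $(y',s)$-mean of the normal flux jump across $\mathcal{I}$ to vanish and thereby rules out a nonzero constant mode in $F_j$ that would destroy the global $L^2$-integrability of $\nabla w_j$. Tracking both the spatial-divergence and the $\partial_s$-contributions to $F_j$ through Lemma~2.1 simultaneously is the genuinely technical point; everything else is a direct application of standard parabolic energy methods.
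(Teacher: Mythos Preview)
Your strategy coincides with the paper's: a far-field ansatz built from $\chi_{\pm}$, a compactly supported defect, Lemma~2.1 to rewrite it, and a truncation-and-limit argument on $\mathbb{D}_N$. There is, however, one genuine gap in your outline, plus two smaller points.

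The gap is in the energy estimate. After the Lemma~2.1 antisymmetry manipulation the defect becomes a full space-time divergence $\sum_{K=1}^{d+1}\partial_{y_K}\bigl(\eta'\,\phi_{\pm,K1\ell}\,\partial_\ell P_j\bigr)$ (plus the explicit $\operatorname{div}f$ piece), and the $K=d+1$ summand is a pure $\partial_s$. Testing it against $w_j^{(N)}$ produces $\int(\cdot)\,\partial_s w_j^{(N)}$, which is \emph{not} controlled by $\|\nabla w_j^{(N)}\|_{L^2}$; your phrase ``moves derivatives off $w_j^{(N)}$ and onto the cutoff $\eta$'' cannot help here, since $\eta$ is $s$-independent. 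The paper's fix is to modify the ansatz once more: subtract $\partial_1\psi_\pm\,\phi_{\pm,(d+1)1\ell}\,\partial_\ell P_j$ from $v$, so that the new unknown $w$ satisfies $\partial_s w-\operatorname{div}(A\nabla w)=\operatorname{div}_y\tilde f$ with $\tilde f\in L^2(\mathbb{D})$ supported in $\mathbb{D}_1$. Only then is the energy bound $\|\nabla w\|_{L^2(\mathbb{D})}\le C\|\tilde f\|_{L^2(\mathbb{D})}$ immediate. Without this additional subtraction (or an equivalent device) your claimed inequality $|\langle F_j,w_j^{(N)}\rangle|\le C\|\nabla w_j^{(N)}\|_{L^2}$ does not follow.

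Two smaller points. First, the paper uses two one-sided cutoffs $\psi_+$ (supported in $\{y_1\ge 0\}$) and $\psi_-$ (in $\{y_1\le 0\}$) rather than a single $\eta$ crossing the interface; this keeps $\psi_+\chi_{+}$ away from $A_-$ and makes $\nabla P_j$ constant on the support of each $\psi_\pm'$, so $(2.3)$ and Lemma~2.1 apply cleanly without tracking extra cross terms or the Dirac at $\mathcal{I}$. Second, your uniqueness sketch has a gap: the cross term after Cauchy--Schwarz involves $|w|$, not $|w-\bar w_R|$, and $|\bar w_R|$ may grow like $\sqrt{R}$ while $\|\nabla w\|$ on the annulus is only $o(1)$. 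One remedy (used in the paper's Theorem~2.4) is to first show that $\int_{\{y_1=R\}\times\mathbb{T}^d}A_{1i}\partial_i w\,dy'ds$ is constant in $R$ and hence zero, which legitimises the substitution $w\mapsto w-\bar w_R$ in the cross term.
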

\begin{proof}
The idea is due to \cite{MR3421758,MR3974127} for the elliptic case. Choose smoothing cut-off functions $\psi_\pm(y_1)$, such that
\begin{equation}\begin{aligned}
&\psi_+(y_1)=1,\text{ if }y_1\geq 1;\ \psi_+(y_1)=0,\text{ if }y_1\leq 0;\\
&\psi_-(y_1)=1,\text{ if }y_1\leq -1;\ \psi_-(y_1)=0,\text{ if }y_1\geq 0.
\end{aligned}\end{equation}

Let $v=:\chi_j-\psi_+(\chi_{+,j}+\vartheta_j\chi_{+,1})-\psi_-\chi_{-,j}$, then a direct computation shows that
\begin{equation}\begin{aligned}
\partial_s v-\operatorname{div}(A\nabla v)=&\operatorname{div}(A\nabla P_j)-\partial_s\left(\psi_+(\chi_{+,j}+\vartheta_j\chi_{+,1})\right)-\partial_s(\psi_-\chi_{-,j})\\
&+\operatorname{div}\left[A\nabla\psi_+(\chi_{+,j}+\vartheta_j\chi_{+,1})+A\nabla\psi_-\chi_{-,j}\right.\\
&\quad\quad\quad\left.+A\psi_+(\nabla\chi_{+,j}+\vartheta_j\nabla\chi_{+,1})+A \psi_-\nabla\chi_{-,j}\right]\\
=&:\operatorname{div}f+g,
\end{aligned}\end{equation}
where by adding the constant term $\widehat{A}\nabla P_j$, we have
\begin{equation}f=(1-\psi_+-\psi_-)(A-\widehat{A})\nabla P_j+A\nabla\psi_+(\chi_{+,j}+\vartheta_j\chi_{+,1})+A\nabla\psi_-\chi_{-,j}\end{equation}
and using $(2.3)$ yields that
$$\begin{aligned}g=:&-\partial_s\left(\psi_+(\chi_{+,j}+\vartheta_j\chi_{+,1})\right)-\partial_s(\psi_-\chi_{-,j})
+\operatorname{div}\left((\psi_++\psi_-)(A-\widehat{A})\nabla P_j\right)\\
&+\operatorname{div}\left[A\psi_+(\nabla\chi_{+,j}+\vartheta_j\nabla\chi_{+,1})+A \psi_-\nabla\chi_{-,j}\right]\\
=:&\ g_++g_-.
\end{aligned}$$

Before moving forward, we need to rewrite $g$ in a more suitable form. For simplicity, we only perform the computation on $g_+$. In view of Lemma 2.1 and using the notation $y_{d+1}=s$, we have
\begin{equation}\begin{aligned}
-\partial_s\left(\psi_+(\chi_{+,j}+\vartheta_j\chi_{+,1})\right)
=&-\sum_{\ell=1}^{d}\partial_{d+1}\left(\psi_+\chi_{+,\ell}\partial_\ell P_j\right)=\sum_{\ell=1}^{d}\partial_{d+1}\left(\psi_+B_{+,(d+1)\ell}\partial_\ell P_j\right)\\
=&\sum_{\ell=1}^{d}\sum_{K=1}^{d+1}\partial_{d+1}\left(\psi_+\partial_K\phi_{+,K(d+1)\ell}\partial_\ell P_j\right),
\end{aligned}\end{equation}

and \begin{equation}\begin{aligned}
\operatorname{div}&\left[\psi_+A(\nabla\chi_{+,j}+\vartheta_j\nabla\chi_{+,1})+\psi_+(A-\widehat{A})\nabla P_j\right]\\
&=\sum_{i,\ell=1}^d\partial_i\left[\psi_+\left(A_{i\ell}+A_{ik}\partial_k\chi_{+,\ell}-\widehat{A}_{+,i\ell}\right)\partial_\ell P_j\right]\\
&=\sum_{i,\ell=1}^d\sum_{K=1}^{d+1}\partial_i\left(\psi_+\partial_K\phi_{+,Ki\ell}\partial_\ell P_j\right).
\end{aligned}\end{equation}
Note that $\nabla P_j$ is constant everywhere but on the interface $\mathcal{I}$ where $\psi_{\pm}$ vanish.
Therefore, combining $(2.13)$-$(2.14)$ after using Lemma 2.1 yields that
\begin{equation*}\begin{aligned}
g_+&=\sum_{\ell=1}^d\sum_{I,K=1}^{d+1}\partial_I\left(\psi_+\partial_K\phi_{+,KI\ell}\partial_\ell P_j\right)
=\sum_{\ell=1}^d\sum_{I,K=1}^{d+1}\partial_I\psi_+\partial_K\phi_{+,KI\ell}\partial_\ell P_j\\
&=\sum_{\ell=1}^d\sum_{I,K=1}^{d+1}\partial_K\left(\partial_I\psi_+\phi_{+,KI\ell}\partial_\ell P_j\right)=\sum_{\ell=1}^d\sum_{K=1}^{d+1}\partial_K\left(\partial_1\psi_+\phi_{+,K1\ell}\partial_\ell P_j\right)\\
&=\sum_{\ell,k=1}^d\partial_k\left(\partial_1\psi_+\phi_{+,k1\ell}\partial_\ell P_j\right)+\sum_{\ell=1}^d \partial_1\psi_+\partial_s\phi_{+,(d+1)1\ell}\partial_\ell P_j.
\end{aligned}\end{equation*}

To proceed, let $w=:v-\partial_1\psi_+\phi_{+,(d+1)1\ell}\partial_\ell P_j-\partial_1\psi_-\phi_{-,(d+1)1\ell}\partial_\ell P_j$, then
\begin{equation}\begin{aligned}
\partial_s w-\operatorname{div}(A\nabla w)=&\operatorname{div}f+\partial_k\left(\partial_1\psi_+\phi_{+,k1\ell}\partial_\ell P_j\right)+\partial_k\left(\partial_1\psi_-\phi_{-,k1\ell}\partial_\ell P_j\right)\\
&+\operatorname{div}\left[A\nabla (\partial_1\psi_+\phi_{+,(d+1)1\ell}\partial_\ell P_j+\partial_1\psi_-\phi_{-,(d+1)1\ell}\partial_\ell P_j)\right]\\
=&:\operatorname{div}\tilde{f}.
\end{aligned}\end{equation}

Since $\text{supp}(\tilde{f})\subset \mathbb{D}_1$ and  $\tilde{f}\in L^2(\mathbb{D})$, then there exists a unique (up to the addition of a constant) weak solution $w$, satisfying $\nabla w\in L^2 (\mathbb{D})$, to the Equation $(2.15)$, with the energy estimate
\begin{equation}||\nabla w||_{L^2 (\mathbb{D})}\leq C ||\tilde{f}||_{L^2(\mathbb{D})}\leq C(d,\kappa),\end{equation}
which immediately  implies that there exists a unique (up to the addition of a constant) $\mathbb{D}$-periodic corrector $\chi_j$ to the Equation $(2.5)$, satisfying the energy estimates $(2.9)$.
Consequently, we complete this proof.
\end{proof}

\begin{rmk}
To solve the Equation $(2.15)$, we first consider the following equations: for $R>1$, we consider
\begin{equation}\left\{\begin{aligned}
&\partial_s w_R-\operatorname{div}(A\nabla w_R)=\operatorname{div}\tilde{f}\quad &\text{in}\quad (y,s)\in \mathbb{D}_R,\\
&w_R(-R,y',s)=w_R(R,y',s)=0 \quad &\text{for}\quad (y',s)\in \mathbb{T}^d.
\end{aligned}\right.\end{equation}
Then the Galarkin's method would ensure a unique $\mathbb{D}$-periodic weak solution $w_R$ to the equation above, satisfying the energy estimates:
$$||\partial_sw_R||_{L^2(\mathbb{T};H^{-1}((-R,R)\times \mathbb{T}^{d-1}))}+||\nabla w_R||_{L^2(\mathbb{D}_R)}\leq C ||\tilde{f}||_{L^2(\mathbb{D})},$$
with the constant $C$ independent of $R$,  which would imply that, up to a
diagonal subsequence that we still denote by $w_R$, for any $G\subset\subset \mathbb{R}$, we have the following weak convergence:
\begin{equation*}\begin{aligned}
&\partial_s w_R\rightharpoonup \partial_s w \ \text{weakly in }L^2(\mathbb{T};H^{-1}(G\times \mathbb{T}^{d-1})),\text{ as }R\rightarrow\infty,\\
&\nabla w_R\rightharpoonup \nabla w \ \text{weakly in }L^2(G\times \mathbb{T}^{d}),\text{ as }R\rightarrow\infty.
\end{aligned}\end{equation*}
Thus we obtain a unique (up to the addition of a constant) weak solution $w$ to the Equation $(2.15)$, satisfying the energy estimate $(2.16)$.

Moreover, if $\partial_s A\in L^\infty(\mathbb{D})$, then $\partial_s \tilde{f}\in L^2(\mathbb{D})$ due to Lemma 2.1, $(2.12)$ as well as $(2.15)$, and $\partial_s w_R$ satisfies
\begin{equation*}\left\{\begin{aligned}
&\partial_s (\partial_sw_R)-\operatorname{div}(A\nabla \partial_sw_R)=\operatorname{div}(\partial_s\tilde{f})+\operatorname{div}(\partial_sA\nabla w_R)\quad &\text{in}\quad (y,s)\in \mathbb{D}_R,\\
&\partial_s w_R(-R,y',s)=\partial_s w_R(R,y',s)=0 \quad &\text{for}\quad (y',s)\in \mathbb{T}^d.
\end{aligned}\right.\end{equation*}

Energy estimate yields that
$$\begin{aligned}||\nabla \partial_s w_R||_{L^2(\mathbb{D}_R)}
&\leq C ||\partial_s\tilde{f}||_{L^2(\mathbb{D})}+C ||\nabla w_R||_{L^2(\mathbb{D})}\\
&\leq C ||\partial_s\tilde{f}||_{L^2(\mathbb{D})}+C||\tilde{f}||_{L^2(\mathbb{D})}.
\end{aligned}$$

Next, multiplying the Equation $(2.17)$ by $\partial_s w_R$ and integrating the resulting equation over $\mathbb{D}_R$ yields that
$$\begin{aligned}
\|\partial_s w_R\|_{L^2(\mathbb{D}_R)}^2
&\leq C ||\nabla w_R||_{L^2(\mathbb{D}_R)}||\nabla \partial_s w_R||_{L^2(\mathbb{D}_R)}+C||\tilde{f}||_{L^2(\mathbb{D}_R)} ||\nabla \partial_s w_R||_{L^2(\mathbb{D}_R)}\\
&\leq C ||\tilde{f}||_{L^2(\mathbb{D})}^2+C||\partial_s \tilde{f}||_{L^2(\mathbb{D})}^2.
\end{aligned}$$

Consequently, from the computation above, if $\partial_s A\in L^\infty(\mathbb{D})$, then we have the following additional energy estimates to the Equation $(2.15)$:
\begin{equation}
\|\partial_s w\|_{L^2(\mathbb{D})}+||\nabla \partial_s w||_{L^2(\mathbb{D})}\leq C ||\tilde{f}||_{L^2(\mathbb{D})}+C||\partial_s \tilde{f}||_{L^2(\mathbb{D})}.
\end{equation}

Moreover, form the construction above and $\partial_s A\in L^\infty(\mathbb{D})$, we can also obtain the far field behaviours:
\begin{equation}w(y_1,y',s)\rightarrow 0,\text{ for any }(y',s)\in \mathbb{T}^d,\quad \text{as }|y_1|\rightarrow\infty,\end{equation}
and the solution $w$ is unique in the sense of the far field behaviours $(2.19)$. Since the proof of $(2.19)$ is totally similar to the proof of Theorem 2.4, we postpone it until Remark 2.5.
\end{rmk}

Now we are ready to prove Theorem 1.1, which is classical and similar to \cite[Theorem 3.6]{MR3361284}. Therefore, we only emphasize on its main ingredient.\\

\noindent\textbf{Proof of Theorem 1.1}: For simplicity, we assume $x_k=0$ and $t_k=0$. Let $P^k$ be the piecewise continuous functions associated with $\widehat{A_k}$ defined in $(2.1)$, for $k=1,2,\cdots$, and $P^0$ associated with $A^0$, respectively. Note that $\widehat{A_k}\rightarrow A^0$, then $\nabla P^k\rightarrow \nabla P^0$.
Moreover, we denote $\Omega_+=:\{x|x\in \Omega,x_1>0\}$ and similarly for $\Omega_-$. Then there holds

\begin{equation}\begin{aligned}
A_k^{\varepsilon_k}\nabla_y \chi_k^{\varepsilon_k}+A_k^{\varepsilon_k}\nabla P^k(x)&=A_{-,k}^{\varepsilon_k}\nabla_y \chi_k^{\varepsilon_k}+A_{-,k}^{\varepsilon_k}\nabla P^k(x)\\
&=A_{-,k}^{\varepsilon_k}\left(\nabla_y \chi_k^{\varepsilon_k}-\nabla_y \chi_{-,k}^{\varepsilon_k}\right)+A_{-,k}^{\varepsilon_k}\nabla P^k(x)+A_{-,k}^{\varepsilon_k}\nabla_y \chi_{-,k}^{\varepsilon_k}\\
&\rightharpoonup \lim_{k\rightarrow\infty}\int_{\mathbb{T}^{d+1}}\left(A_{-,k}^{\varepsilon_k}\nabla P^k(x)+A_{-,k}^{\varepsilon_k}\nabla_y \chi_{-,k}^{\varepsilon_k}\right)\\
&=\lim_{k\rightarrow\infty}\widehat{A_{-,k}}\nabla P^k(x)\\
&=\widehat{A^{-,0}}\nabla P^0(x) \text{ weakly in }L^2(\Omega_-\times (T_0,T_1)),
\end{aligned}\end{equation}
due to \cite[Lemma 3.4]{MR3361284} and
$$||\nabla_y \chi_k^{\varepsilon_k}-\nabla_y \chi_{-,k}^{\varepsilon_k}||_{L^2(\Omega_-\times (T_0,T_1))}\leq C \varepsilon_k^{1/2} ||\nabla_y \chi_k-\nabla_y \chi_{-,k}||_{L^2(\mathbb{R}_-\times \mathbb{T}^d)}\leq C\varepsilon_k^{1/2}.$$

Similarly,
\begin{equation}\begin{aligned}
&A_k^{\varepsilon_k}\nabla_y \chi_k^{\varepsilon_k}+A_k^{\varepsilon_k}\nabla P^k(x)\\
=&A_{+,k}^{\varepsilon_k}\nabla_y \chi_k^{\varepsilon_k}+A_{+,k}^{\varepsilon_k}\nabla P^k(x)\\
=&A_{+,k}^{\varepsilon_k}\left(\nabla_y \chi_k^{\varepsilon_k}-\nabla_y \chi_{+,k}^{\varepsilon_k}\cdot \nabla P^k\right)+A_{+,k}^{\varepsilon_k}\nabla P^k(x)+A_{+,k}^{\varepsilon_k}\nabla_y \chi_{+,k}^{\varepsilon_k}\cdot \nabla P^k\\
\rightharpoonup& \lim_{k\rightarrow\infty}\int_{\mathbb{T}^{d+1}}\left(A_{+,k}^{\varepsilon_k}\nabla P^k(x)+A_{+,k}^{\varepsilon_k}\nabla_y \chi_{+,k}^{\varepsilon_k}\cdot \nabla P^k\right)\\
=&\lim_{k\rightarrow\infty}\widehat{A_{+,k}}\nabla P^k(x)\\
=&\widehat{A^{+,0}}\nabla P^0(x) \text{ weakly in }L^2(\Omega_+\times (T_0,T_1)),
\end{aligned}\end{equation}
due to \cite[Lemma 3.4]{MR3361284} and
$$||\nabla_y \chi_k^{\varepsilon_k}-\nabla_y \chi_{+,k}^{\varepsilon_k}\nabla P^k||_{L^2(\Omega_+\times (T_0,T_1))}\leq C \varepsilon_k^{1/2} ||\nabla_y \chi_k-\nabla_y \chi_{+,k}\nabla P^k||_{L^2(\mathbb{R}_+\times \mathbb{T}^d)}\leq C\varepsilon_k^{1/2}.$$

Therefore, combining $(2.20)$-$(2.21)$ yields that
$$A_k^{\varepsilon_k}\nabla_y \chi_k^{\varepsilon_k}+A_k^{\varepsilon_k}\nabla P^k\rightharpoonup \widehat{A^{0}}\nabla P^0 \text{ weakly in }L^2(\Omega\times (T_0,T_1)).$$

Meanwhile, note that $\operatorname{div}\left(\widehat{A^{0}}\nabla P_j^0\right)=0$ for $j=1,\cdots,d$, then for any $\varphi \in C^1_0(\Omega\times (T_0,T_1))$, we have
$$\int_{T_0}^{T_1}\int_{\Omega}\widehat{A^{0}}\nabla P^0\cdot u \nabla \varphi=-\int_{T_0}^{T_1}\int_{\Omega}\widehat{A^{0}}\nabla P^0\cdot \varphi\nabla u.$$

In fact, the equality above is useful to determine the effective equation. After obtaining the facts above, the other computation is totally similar to \cite[Theorem 3.6]{MR3361284}, which we omit and refer to \cite[Theorem 3.6]{MR3361284} for more details.\qed\\

Next, in order to obtain the so-called dual correctors \cite[Lemma 2.1]{MR3596717}, we need to investigate the decay estimates for $w$ defined in $(2.15)$.
\begin{thm}[decay estimates] Assume that the matrix $A$ satisfies the assumptions $(1.2)$-$(1.4)$, then there exists some positive constant $\lambda$, depending only on $d$, $\kappa$ and $M$, such that
\begin{equation}\begin{aligned}
&\int_{(-\infty,y_1)\times \mathbb{T}^{d}}|\nabla_{y,s}\chi_j-\nabla_{y,s}\chi_{-,j}|^2\leq C\exp(-\lambda|y_1|)\quad &\text{if } y_1<-1;\\
&\int_{(y_1,\infty)\times \mathbb{T}^{d}}|\nabla_{y,s}\chi_j-\nabla_{y,s}\chi_{+,j}-\vartheta_j\nabla_{y,s}\chi_{+,1}|^2\leq C\exp(-\lambda|y_1|)\quad &\text{if } y_1>1.
\end{aligned}\end{equation}
 \end{thm}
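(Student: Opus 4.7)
The plan is a Saint-Venant–type exponential decay argument for the function $w$ constructed in the proof of Theorem 2.3. The starting observation is that $w$ coincides with $\chi_j-\chi_{+,j}-\vartheta_j\chi_{+,1}$ on $y_1>1$ and with $\chi_j-\chi_{-,j}$ on $y_1<-1$, because the cutoffs $\psi_\pm$ and $\partial_1\psi_\pm$ vanish there; moreover, since the right-hand side $\tilde f$ in $(2.15)$ is supported in $\mathbb{D}_1$, $w$ satisfies the \emph{homogeneous} equation $\partial_s w-\operatorname{div}(A\nabla w)=0$ on $|y_1|>1$. By symmetry it suffices to work on $y_1>1$. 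I define $E(R):=\int_{(R,\infty)\times\mathbb{T}^d}(|\nabla_y w|^2+|\partial_s w|^2)\,dy\,ds$ for $R\geq 1$ (finite thanks to $(2.16)$ and $(2.18)$) and aim to establish a geometric contraction $E(R+1)\leq \theta E(R)$ for some $\theta=\theta(d,\kappa,M)\in(0,1)$; iterating then yields $E(R)\leq C\exp(-\lambda R)$ with $\lambda=-\log\theta$, which gives $(2.22)$ on the right side.

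First I would prove a spatial Caccioppoli inequality. Test the homogeneous equation against $\eta^2(w-c_R)$, where $\eta=\eta_R(y_1)$ is a smooth cutoff with $\eta=0$ for $y_1\leq R$, $\eta=1$ for $y_1\geq R+1$, $|\eta'|\leq C$, and $c_R:=\fint_{S_R}w$ with $S_R=(R,R+1)\times\mathbb{T}^d$. Time-periodicity in $s$ kills the parabolic term, since $\int\eta^2(w-c_R)\partial_s w=\tfrac12\int\eta^2\partial_s(w-c_R)^2=0$. Ellipticity and Young, combined with Poincar\'e--Wirtinger on the bounded Lipschitz domain $S_R$ (which naturally involves the full $(y,s)$-gradient), will produce
\[\int\eta^2|\nabla_y w|^2\leq C\int_{S_R}(w-c_R)^2\leq C\int_{S_R}|\nabla_{y,s}w|^2=C[E(R)-E(R+1)].\]

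Next I would control $\partial_s w$ by testing the equation against $\eta^2\partial_s w$. The key manipulation is the term $\int\eta^2\nabla_y\partial_s w\cdot A\nabla w$: swapping $\nabla_y$ and $\partial_s$ and using $A^*=A$ to write $2A\partial_s\nabla w\cdot\nabla w=\partial_s(\nabla w\cdot A\nabla w)-\partial_s A\,\nabla w\cdot\nabla w$, then integrating in $s$ over $\mathbb{T}$ to kill the total $\partial_s$-derivative, leaves exactly $-\tfrac12\int\eta^2\partial_s A\,\nabla w\cdot\nabla w$; this is where the smoothness assumption $(1.4)$ enters decisively, since $|\partial_s A|\leq M$ bounds it by $\tfrac{M}{2}\int\eta^2|\nabla w|^2$. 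Young and ellipticity then yield
\[\int\eta^2(\partial_s w)^2\leq C\int|\nabla\eta|^2|\nabla w|^2+CM\int\eta^2|\nabla w|^2.\]

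The hard part will be the non-local tail $CM\int\eta^2|\nabla w|^2$ in this second estimate: naively it is of size $CM\cdot E(R)$ rather than a slab-difference, and the factor $M$ would prevent a contraction ratio strictly less than $1$ for large $M$. The trick to close the argument is to \emph{recycle} the Caccioppoli bound from the first step, which in fact gives $\int\eta^2|\nabla w|^2\leq C[E(R)-E(R+1)]$, so that both right-hand terms become slab-differences. Combining the two steps yields $E(R+1)\leq C(2+M)[E(R)-E(R+1)]$, whence $E(R+1)\leq\frac{C(2+M)}{1+C(2+M)}E(R)$, which is the desired contraction. Iteration from $R_0=1$, where $E(R_0)$ is controlled by $(2.16)$ and $(2.18)$, produces the exponential decay on the right side; the estimate on $y_1<-1$ is symmetric.
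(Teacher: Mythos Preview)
Your approach is correct in spirit and reaches the same conclusion, but it differs from the paper's argument in its mechanics. The paper does not use cutoff functions: it integrates the homogeneous equation over a sharp slab $(R_1,R_2)\times\mathbb{T}^d$, first against the constant $1$ to show that the normal flux $\int_{\{R\}\times\mathbb{T}^d}A_{1i}\partial_i w$ is independent of $R$ (hence zero, by the global $L^2$ bound on $\nabla w$), then against $w$ and against $\partial_s w$. The vanishing-flux observation lets the paper subtract the slice average $C(R)=\fint_{\{R\}\times\mathbb{T}^d}w$ and apply Poincar\'e on the \emph{boundary slices} $\{R_i\}\times\mathbb{T}^d$ (involving only tangential derivatives $\nabla_{y',s}$), obtaining $\int_{(R_1,R_2)}|\nabla_{y,s}w|^2\leq C\|\nabla_{y,s}w\|^2_{L^2(\{R_1\}\times\mathbb{T}^d)}+C\|\nabla_{y,s}w\|^2_{L^2(\{R_2\}\times\mathbb{T}^d)}$. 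Sending $R_2\to\infty$ along a good sequence and applying the differential form of Gr\"onwall yields the decay. Your version replaces all of this with slab Poincar\'e and a discrete hole-filling contraction, which bypasses the flux trick entirely; this is arguably more robust and closer to modern Saint-Venant arguments, while the paper's route is slightly sharper in that the Poincar\'e constant depends only on $\mathbb{T}^d$ rather than on the unit slab.

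One technical point you should make explicit: your first test function $\eta^2(w-c_R)$ is supported on the unbounded set $[R,\infty)\times\mathbb{T}^d$, and a priori you only know $\nabla_{y,s}w\in L^2(\mathbb{D})$, not $w-c_R\in L^2((R,\infty)\times\mathbb{T}^d)$. The pairing $\int\partial_s w\cdot\eta^2(w-c_R)$ and the limiting procedure both need justification. This is easily repaired by introducing an additional far-field cutoff $\tilde\eta_N$ vanishing for $y_1\geq N+1$ and passing $N\to\infty$ along a subsequence where $N\|\nabla_{y,s}w\|^2_{L^2((N,N+1)\times\mathbb{T}^d)}\to 0$ (such a sequence exists since $\nabla_{y,s}w\in L^2$); the extra boundary term then vanishes because $|c_N-c_R|$ grows at most like $\sqrt{N}$. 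The paper avoids this issue automatically by working on bounded slabs throughout and only sending $R_2\to\infty$ at the very end. Your second test function $\eta^2\partial_s w$ is unproblematic, since $(2.18)$ gives $\partial_s w,\nabla\partial_s w\in L^2(\mathbb{D})$ directly.
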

\begin{proof}We only consider the case $y_1\geq 1$, since the decay estimate for $y_1\leq -1$ is totally similar to the proof of  $y_1\geq 1$. Recall that $$w=\chi_j-\psi_+(\chi_{+,j}+\vartheta_j\chi_{+,1})-\psi_-\chi_{-,j}-\partial_1\psi_+\phi_{+,(d+1)1\ell}\partial_\ell P_j-\partial_1\psi_-\phi_{-,(d+1)1\ell}\partial_\ell P_j.$$
First, due to $\partial_sA_{\pm}\in L^\infty(\mathbb{R}^{d+1})$ and $(2.18)$ in Remark 2.3, we have
\begin{equation}
||\partial_s w||_{L^2(\mathbb{D})}\leq C||\partial_s \tilde{f}||_{L^2(\mathbb{D})}+C|| \tilde{f}||_{L^2(\mathbb{D})}\leq C.
\end{equation}
Now, noting $(2.15)$ and  the choices of the cut-off functions $\psi_{\pm}$ in $(2.10)$, we have
\begin{equation}
\partial_s w- \operatorname{div}(A\nabla w)=0 \quad \text{if}\quad y_1>1.
\end{equation}
For any $R_2>R_1> 1$, multiplying the equation above by $\partial_s w$ and integrating the resulting equation over $(y_1,y',s)\in (R_1,R_2)\times \mathbb{T}^d$ yields that
\begin{equation}\begin{aligned}
\int_{(R_1,R_2)\times \mathbb{T}^d} |\partial_s w|^2dyds
=&\frac{1}{2}\int_{(R_1,R_2)\times \mathbb{T}^d}\partial_sA \nabla w \nabla w dyds+\int_{\{R_2\}\times \mathbb{T}^d}A_{1i}\partial_i w\partial_s wdy'ds\\
&-\int_{\{R_1\}\times \mathbb{T}^d}A_{1i}\partial_i w\partial_s wdy'ds\\
\leq& \int_{\{R_2\}\times \mathbb{T}^d}A_{1i}\partial_i w\partial_s wdy'ds-\int_{\{R_1\}\times \mathbb{T}^d}A_{1i}\partial_i w\partial_s wdy'ds\\
&+C_0 ||\nabla w||_{L^2((R_1,R_2)\times \mathbb{T}^d)}^2,
\end{aligned}\end{equation}
where we have used $A\nabla w \nabla\partial_sw=A\nabla\partial_s w \nabla w=1/2 A_{ij}\partial_s(\partial_iw\partial_jw)$ due to $A=A^*$.

Next, for any $R_2>R_1>1$, multiplying the Equation $(2.24)$ by 1 and integrating the resulting equation over $(y_1,y',s)\in (R_1,R_2)\times \mathbb{T}^d$ yields that
\begin{equation*}
\int_{\{R_2\}\times \mathbb{T}^d}A_{1i}\partial_i w dy'ds=\int_{\{R_1\}\times \mathbb{T}^d}A_{1i}\partial_i w dy'ds,
\end{equation*}
which implies that for any $R> 1$, the integral $\int_{\{R\}\times \mathbb{T}^d}A_{1i}\partial_i w dy'ds$ equals to some constant and thus vanishes due to $\nabla w\in L^2((-\infty,\infty)\times \mathbb{T}^d)$. For simplicity, denote $C(R)=:\int_{\{R\}\times \mathbb{T}^d} w dy'ds$. Then multiplying the Equation $(2.24)$ by $w$ and integrating the resulting equation over $(y_1,y',s)\in (R_1,R_2)\times \mathbb{T}^d$ yields that
\begin{equation}\begin{aligned}
&\int_{(R_1,R_2)\times \mathbb{T}^d}A\nabla w\nabla wdyds\\
=&\int_{\{R_2\}\times \mathbb{T}^d}w A_{1i}\partial_i wdy'ds-\int_{\{R_1\}\times \mathbb{T}^d}wA_{1i}\partial_i wdy'ds\\
=&\int_{\{R_2\}\times \mathbb{T}^d}(w-C(R_2)) A_{1i}\partial_i wdy'ds-\int_{\{R_1\}\times \mathbb{T}^d}(w-C(R_1))A_{1i}\partial_i wdy'ds\\
\leq & C||\nabla_{y,s} w||^2_{L^2(\{R_2\}\times \mathbb{T}^d)}+C||\nabla_{y,s} w||^2_{L^2(\{R_1\}\times \mathbb{T}^d)},
\end{aligned}\end{equation}
where we have used the Poincar\'{e}'s inequality in the last inequality.

Therefore, combining $(2.25)$ and $(2.26)$ yields that
\begin{equation}
||\nabla_{y,s} w||^2_{L^2((R_1,R_2)\times \mathbb{T}^d)}\leq C||\nabla_{y,s} w||^2_{L^2(\{R_2\}\times \mathbb{T}^d)}+C||\nabla_{y,s} w||^2_{L^2(\{R_1\}\times \mathbb{T}^d)}.
\end{equation}
Let $R_2\rightarrow \infty$ in $(2.27)$ after in view of $(2.16)$ and $(2.23)$, then Grownwall's Lemma implies that for some positive constant $\lambda$ depending only on $d$, $\kappa$ and $M$,  we have
$$\int_{(R_1,\infty)\times \mathbb{T}^d}|\nabla_{y,s} w|^2dyds\leq C\exp (-\lambda R_1).$$
Thus, we complete the proof for $y_1>1$.
\end{proof}

\begin{rmk}In this remark, we are ready to prove the claim $(2.19)$. For any $R\geq 100$, and according to Remark 2.3,
$$\int_{(1,R)\times \mathbb{T}^d} |\nabla_{y,s} w_R|^2dyds\leq C,$$
with the constant $C$ independent of $R$. Note that $w_R(\pm R,y',s)=0$, for any $(y',s)\in \mathbb{T}^d$, then the estimate above can be improved as
$$\int_{(1,\infty)\times \mathbb{T}^d} |\nabla_{y,s} w_R|^2dyds\leq C,$$
after extending $w_R$ to be 0 outside of $\mathbb{D}_R$.

Next, similar to the proof of Theorem 2.4, for any $1\leq R_1\leq R$, we know that the integral $\int_{\{R_1\}\times \mathbb{T}^d}A_{1i}\partial_i w_R dy'ds$ equals to some constant. Without loss of generality, we assume $\int_{\{R_1\}\times \mathbb{T}^d}A_{1i}\partial_i w_Rdy'ds=c_0\geq 0$. Then by Holder's inequality, there holds
\begin{equation*}\begin{aligned}
c_0(R-1)=&\int_{1}^R\int_{\mathbb{T}^d}A_{1i}\partial_i w_Rdy_1dy'ds\leq C\int_{1}^R\left(\int_{\mathbb{T}^d} |\nabla w_R|^2dy'ds\right)^{1/2}dy_1\\
\leq& C(R-1)^{1/2}\left(\int_{1}^R\int_{\mathbb{T}^d}|\nabla w_R|^2dyds\right)^{1/2}\leq C(R-1)^{1/2},
\end{aligned}\end{equation*}
with the constant $C$ independent of $R$. Therefore, we choose $R\geq C_1$ for some constant $C_1$ suitably large, which immediately implies that $c_0=0$.

To proceed, following the ideas of $(2.25)$-$(2.27)$, we can obtain
$$\int_{(R_1,\infty)\times \mathbb{T}^d}|\nabla_{y,s} w_R|^2dyds\leq C\exp (-\lambda R_1),$$
for any $R_1>1$.

Consequently, for any $R_1\geq 1$ and for any $R\geq C_1$, and according to the local $L^\infty$ estimates (or boundary estimates after noting that we have extended $w_R$ to be 0 outside of $\mathbb{D}_R$) of parabolic theory, there holds
\begin{equation}\begin{aligned}
|w_R(R_1,x_0',t_0)|\leq &C\int_{(R_1-1,R_1+1)\times \mathbb{T}^d}|w_R(R_1,x',t)|\\
=& C\int_{(R_1-1,R_1+1)\times \mathbb{T}^d}|w_R(R_1,x',t)-w_R(\infty,x',t)|\\
\leq &C\int_{R_1-1}^{\infty}\int_{\mathbb{T}^d}|\nabla_{y_1} w_R|\\
\leq &\sum_{n=\lfloor R_1-1\rfloor}^{\infty}\int_{n}^{n+1}\int_{\mathbb{T}^d}|\nabla_{y,s} w_R|\\
\leq &C\sum_{n=\lfloor R_1-1\rfloor }^{\infty}\exp (-\lambda R_1/2)\\
\leq &C\exp (-\lambda R_1/2).
\end{aligned}\end{equation}

As a direct consequence of the inequality above, we have $|w(R_1,x_0',t_0)|\leq C\exp (-\lambda R_1/2)$ after letting $R\rightarrow\infty$, and it is totally similar to the proof above if we consider $w(-R_1,x',t)$ for $R_1>1$. Thus, we have complete the proof of claim $(2.19)$ and
\begin{equation}
|w(y_1,y',s)|\leq C\exp (-\lambda |y_1|/2)\quad \text{if}\quad |y_1|\geq 1.
\end{equation}
\end{rmk}

To proceed, we need the solvability of the following Poisson equation defined in $\mathbb{D}$. Note that in the following several lemmas, $\Delta_{d+1}$, $\nabla_{d+1}$ and $\nabla^2_{d+1}$ denote the Laplacian, gradient and Hessian in $\mathbb{R}^{d+1}$ with $y_{d+1}=s$, respectively.
\begin{lemma}
Let $f\in L^1(\mathbb{D})\cap L^2(\mathbb{D})$, then there exists a solution $u$ to the Poisson equation $\Delta_{d+1} u=f$ in $\mathbb{D}$, such that $\nabla_{d+1} u\in L^\infty(\mathbb{D})+ L^2(\mathbb{D})$ and $\nabla^2_{d+1} u\in L^2(\mathbb{D})$.
\end{lemma}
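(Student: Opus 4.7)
The plan is to exploit the product structure $\mathbb{D}=\mathbb{R}\times\mathbb{T}^d$ by Fourier decomposition in the compact variables $z=(y_2,\dots,y_d,s)\in\mathbb{T}^d$, reducing the Poisson equation to a family of ODEs in the non-compact variable $y_1$. Writing
$$f(y_1,z)=\sum_{k\in\mathbb{Z}^d}\hat f_k(y_1)\,e^{2\pi i k\cdot z},\qquad u(y_1,z)=\sum_{k\in\mathbb{Z}^d}\hat u_k(y_1)\,e^{2\pi i k\cdot z},$$
the equation $\Delta u=f$ separates into
$$\hat u_k''(y_1)-4\pi^2|k|^2\hat u_k(y_1)=\hat f_k(y_1),\qquad k\in\mathbb{Z}^d.$$
I would split the solution as $u=u_0+u_1$, where $u_0$ carries the single mode $k=0$ and $u_1$ collects all $k\neq 0$ modes; these two pieces will produce, respectively, the $L^\infty$ and $L^2$ summands of $\nabla u$.

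For the non-zero modes I would use the explicit Green's function $G_k(t)=-(4\pi|k|)^{-1}e^{-2\pi|k||t|}$, so that $\hat u_k=G_k\ast\hat f_k$. Since $\|G_k\|_{L^1(\mathbb{R})}\sim|k|^{-2}$ and each differentiation of $G_k$ costs a factor $|k|$, Young's convolution inequality yields
$$|k|^2\|\hat u_k\|_{L^2(\mathbb{R})}+|k|\,\|\hat u_k'\|_{L^2(\mathbb{R})}+\|\hat u_k''\|_{L^2(\mathbb{R})}\leq C\|\hat f_k\|_{L^2(\mathbb{R})}.$$
Summing in $k\neq 0$ and invoking Parseval in the $z$-variable gives $\nabla u_1,\nabla^2 u_1\in L^2(\mathbb{D})$ with norms controlled by $\|f\|_{L^2(\mathbb{D})}$.

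For the zero mode, $u_0$ depends only on $y_1$ and solves $u_0''(y_1)=\bar f(y_1)$ with $\bar f(y_1):=\int_{\mathbb{T}^d}f(y_1,z)\,dz$. By Fubini, $\bar f\in L^1(\mathbb{R})\cap L^2(\mathbb{R})$. Setting $u_0'(y_1):=\int_{-\infty}^{y_1}\bar f(t)\,dt$ gives $|u_0'(y_1)|\leq\|\bar f\|_{L^1(\mathbb{R})}\leq\|f\|_{L^1(\mathbb{D})}$, so $\partial_{y_1}u_0\in L^\infty(\mathbb{D})$, while $\partial_{y_1}^2 u_0=\bar f\in L^2(\mathbb{D})$ and all $z$-derivatives of $u_0$ vanish. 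Adding $u=u_0+u_1$ then produces a solution with $\nabla u=u_0'\,e_1+\nabla u_1\in L^\infty(\mathbb{D})+L^2(\mathbb{D})$ and $\nabla^2 u\in L^2(\mathbb{D})$, as required.

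The only genuine subtlety is the zero Fourier mode: the restriction of $\Delta$ to functions independent of $z$ is just $\partial_{y_1}^2$ on $\mathbb{R}$, which has no spectral gap, so one cannot expect an $L^2$ bound on $u_0'$ from $\bar f\in L^2$ alone. The hypothesis $f\in L^1(\mathbb{D})$ is precisely what converts the single antiderivative into an $L^\infty$ bound on $u_0'$, and this is the structural reason the conclusion is stated in the class $L^\infty+L^2$ rather than $L^2$ alone. Uniqueness of the decomposition up to the obvious kernel (affine functions of $y_1$) is not asserted and plays no role.
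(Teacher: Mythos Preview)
Your argument is correct and follows the same structural decomposition as the paper: split $f$ into its torus average $\bar f(y_1)$ (your $k=0$ mode, the paper's $f_1$) and the mean-zero remainder (your $k\neq 0$ modes, the paper's $f_2$); use the $L^1$ hypothesis to control the single antiderivative of $\bar f$ in $L^\infty$; and handle the mean-zero part in $L^2$.

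The only noteworthy difference is in how the mean-zero part is treated. You work mode-by-mode with the explicit one-dimensional Green's function $G_k(t)=-(4\pi|k|)^{-1}e^{-2\pi|k||t|}$ and Young's inequality, then reassemble via Parseval. The paper instead first solves, for each fixed $y_1$, the torus Poisson equation $\Delta_{y'}N_2(y_1,\cdot)=f_2(y_1,\cdot)$ to rewrite $f_2=\operatorname{div}_y g$ with $g\in L^2(\mathbb{D})$, and then invokes Lax--Milgram on $\mathbb{D}$ to produce $\tilde N_2$ with $\nabla\tilde N_2\in L^2(\mathbb{D})$, followed by standard elliptic regularity for $\nabla^2\tilde N_2\in L^2$. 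Your route is more explicit and self-contained (no appeal to Lax--Milgram or regularity theory), while the paper's route avoids the Fourier bookkeeping and generalizes more readily to operators other than the flat Laplacian. Both give the same conclusion with the same quantitative dependence on $\|f\|_{L^1}$ and $\|f\|_{L^2}$.
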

\begin{proof}Actually, this result has been proved in \cite{zhang2023quantitative,MR3974127}, and we provide it for completeness.
First, we decompose $f(y)=f_1(y_1)+f_2(y)$, with
$$f_1(y_1)=\fint_{\mathbb{T}^{d}}f(y_1,y')dy',$$
and $y_{d+1}=s$. Then it is easy to see that
\begin{equation}
f_1,f_2\in L^1(\mathbb{D})\cap L^2(\mathbb{D}),\ \fint_{\mathbb{T}^{d}}f_2(y_1,y')dy'=0.
\end{equation}
Denote $$N_1(y_1)=\int_{0}^{y_1}\int_{0}^{t}f_1(s)dsdt,$$
then $\Delta_{d+1} N_1(y_1)=\partial_1^2 N_1(y_1)=f_1(y_1)$. Consequently, according to $(2.30)$, $\nabla_{d+1} N_1\in L^\infty(\mathbb{D})$ and $\nabla^2_{d+1} N_1\in L^2(\mathbb{D})$.

To continue, by viewing $y_1$ as a parameter after noting $(2.30)$, we can solve the following equation,
\begin{equation}\Delta_{y'}N_2(y_1,y')=f_2(y_1,y')\text{ in }\mathbb{T}^{d},\ \text{ with }\fint_{\mathbb{T}^{d}}N_2(y_1,y')dy'=0.\end{equation}
Define the $\mathbb{D}$-periodic function $g=:(0,\partial_2 N_2,\cdots, \partial_{d+1} N_2)$.
 By energy estimates, we have $||\nabla_{y'}N_2(y_1,\cdot)||_{L^2(\mathbb{T}^{d})}\leq C ||f_2(y_1,\cdot)||_{L^2(\mathbb{T}^{d})}$, with $C$ being independent of $y_1$, which further implies that $||\nabla_{y'}N_2||_{L^2(\mathbb{D})}\leq C ||f_2||_{L^2(\mathbb{D})}$ and $||g||_{L^2(\mathbb{D})}\leq C ||f_2||_{L^2(\mathbb{D})}$.

According to the Equation $(2.31)$, we can express $f_2$ as $f_2=\operatorname{div}_yg$ with $g\in L^2({\mathbb{D}})$, then by the
classical Lax-Milgram Theorem, there exists a unique (up to the addition of constant) $\tilde{N}_2$ solving the
Poisson equation $\Delta_{d+1} \tilde{N}_2=\operatorname{div}g$ with $\nabla_{d+1} \tilde{N}_2\in  L^2({\mathbb{D}})$.
Moreover, due to $\Delta_{d+1} \tilde{N}_2=f_2$, $\nabla_{d+1} \tilde{N}_2\in  L^2({\mathbb{D}})$ and $f_2\in L^1(\mathbb{D})\cap L^2(\mathbb{D})$, we have $\nabla^2_{d+1} \tilde{N}_2\in L^2(\mathbb{D})$ according to the regularity theory.

Consequently, $N_1+\tilde{N}_2$ is the desired solution satisfying $\nabla_{d+1}  (N_1+\tilde{N}_2)\in L^\infty(\mathbb{D})+ L^2(\mathbb{D})$ and $\nabla^2_{d+1} (N_1+\tilde{N}_2)\in L^2(\mathbb{D})$.\end{proof}

To introduce the following result, we denote
$$\mathcal{H}=\left\{u:\nabla_{d+1} u\in L^2(\mathbb{D}), \fint_{\mathbb{D}_1}u=0\right\},$$
which is a Hilbert space equipped with the inner product: $$\langle u,v\rangle_{\mathcal{H}}=:\int_{\mathbb{D}}\nabla_{d+1} u \cdot \nabla_{d+1} vdy.$$

\begin{lemma}
Let $f\in L^2(\mathbb{D})$ with $supp(f)\subset \mathbb{D}_1$, then there exists a unique solution $u\in \mathcal{H}$ to the Poisson equation $\Delta_{d+1} u=f$ in $\mathbb{D}$, with the energy estimate $||\nabla_{d+1} u||_{L^2(\mathbb{D})}\leq C ||f||_{L^2(\mathbb{D})}$, for the constant $C$ depending only on $d$. Moreover,
due to the regularity theory, we have $||\nabla^2_{d+1} u||_{L^2(\mathbb{D})}\leq C ||f||_{L^2(\mathbb{D})}$.
\end{lemma}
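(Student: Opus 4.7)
The plan is to apply the Riesz representation theorem (or equivalently Lax--Milgram) on the Hilbert space $\mathcal{H}$, with the weak formulation
\[
\int_{\mathbb{D}}\nabla u\cdot\nabla v\,dy=-\int_{\mathbb{D}}fv\,dy\qquad\text{for all }v\in\mathcal{H}.
\]
The left-hand side is exactly $\langle u,v\rangle_{\mathcal{H}}$, so the only nontrivial point is to show that the linear functional $\ell(v):=-\int_{\mathbb{D}}fv$ is bounded on $\mathcal{H}$.

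First I would verify that $\mathcal{H}$ is indeed a Hilbert space. Completeness reduces to a Poincar\'e-type inequality on $\mathbb{D}_1=(-1,1)\times\mathbb{T}^d$: for any $v$ with $\nabla v\in L^2(\mathbb{D})$ and $\fint_{\mathbb{D}_1}v=0$, one has
\[
\|v\|_{L^2(\mathbb{D}_1)}\leq C\|\nabla v\|_{L^2(\mathbb{D}_1)}\leq C\|\nabla v\|_{L^2(\mathbb{D})},
\]
because $\mathbb{D}_1$ is a bounded cylindrical domain and $v$ has mean zero there; this controls the seminorm lift and makes $\langle\cdot,\cdot\rangle_{\mathcal{H}}$ an inner product whose induced norm is complete.

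Next, since $\mathrm{supp}(f)\subset\mathbb{D}_1$, the same Poincar\'e inequality gives
\[
|\ell(v)|=\Big|\int_{\mathbb{D}_1}fv\,dy\Big|\leq \|f\|_{L^2(\mathbb{D})}\|v\|_{L^2(\mathbb{D}_1)}\leq C\|f\|_{L^2(\mathbb{D})}\|\nabla v\|_{L^2(\mathbb{D})},
\]
so $\ell$ is a bounded linear functional on $\mathcal{H}$. The Riesz representation theorem then produces a unique $u\in\mathcal{H}$ with $\langle u,v\rangle_{\mathcal{H}}=\ell(v)$ for every $v\in\mathcal{H}$, and choosing $v=u$ yields the energy estimate $\|\nabla u\|_{L^2(\mathbb{D})}\leq C\|f\|_{L^2(\mathbb{D})}$. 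Uniqueness in $\mathcal{H}$ is automatic from the Hilbert structure (two solutions differ by an element whose $\mathcal{H}$-norm is zero and whose $\mathbb{D}_1$-average is zero, hence zero).

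For the $H^2$ estimate, the equation $\Delta u=f$ is a constant-coefficient elliptic equation on the flat cylinder $\mathbb{D}=\mathbb{R}\times\mathbb{T}^d$, which has no boundary. Standard interior $W^{2,2}$ elliptic regularity (Calder\'on--Zygmund/Fourier multipliers, applied locally and then patched using the periodicity in $(y',s)$ and a covering of $\mathbb{R}$ by unit intervals in $y_1$) gives
\[
\|\nabla^2 u\|_{L^2(\mathbb{D})}\leq C\bigl(\|f\|_{L^2(\mathbb{D})}+\|\nabla u\|_{L^2(\mathbb{D})}\bigr)\leq C\|f\|_{L^2(\mathbb{D})},
\]
where the second inequality uses the already-proved energy estimate. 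The only mildly delicate step is the Poincar\'e estimate used to bound $\ell$; everything else is a direct application of Hilbert-space methods and standard elliptic regularity.
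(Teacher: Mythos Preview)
Your proposal is correct and follows essentially the same route as the paper: the paper defines the functional $F(v)=\int_{\mathbb{D}_1}f\bigl(v-\fint_{\mathbb{D}_1}v\bigr)$, bounds it by the Poincar\'e inequality on $\mathbb{D}_1$, and then invokes Lax--Milgram, which here is equivalent to your Riesz argument since the bilinear form is the inner product itself. Your write-up is in fact slightly more complete than the paper's, which takes the Hilbert structure of $\mathcal{H}$ for granted and dismisses the $H^2$ bound with the phrase ``due to the regularity theory''.
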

\begin{proof}
For any $v\in \mathcal{H}$, we define
\begin{equation*}\begin{aligned}
F(v)=:\int_{\mathbb{D}}fvdy=\int_{\mathbb{D}_1}fvdy=\int_{\mathbb{D}_1}f\left(v-\fint_{\mathbb{D}_1}v\right)dy,
\end{aligned}\end{equation*}

which, by Poincar\'{e}'s inequality, implies that
\begin{equation*}\begin{aligned}
|F(v)|\leq C||f||_{L^2(\mathbb{D})}||\nabla_{d+1} v||_{L^2(\mathbb{D})}.
\end{aligned}\end{equation*}

Thus, $F$ is a bounded linear functional on $\mathcal{H}$. Then according to the calssical Lax-Milgram Theorem, there exists a unique solution $u\in \mathcal{H}$ to  the Poisson equation $\Delta_{d+1} u=f$ in $\mathbb{D}$, satisfying the energy estimate $||\nabla_{d+1} u||_{L^2(\mathbb{D})}\leq C ||f||_{L^2(\mathbb{D})}$.

Consequently, we complete this proof.
%
%
\end{proof}

\begin{thm}[dual correctors] Let $1\leq j\leq d$ and assume that the matrix $A(y,s)$ satisfies the conditions $(1.2)$-$(1.4)$. Then there exist $\mathbb{D}$-periodic functions $\phi_{KIj}(y,s)$ in $\mathbb{R}^{d+1}$ such that \begin{equation}\begin{aligned}
&\phi_{KIj}-\psi_+\phi_{+,KI\ell}\partial_\ell P_j-\psi_-\phi_{-,KIj}\in L^\infty(\mathbb{D})+L^2(\mathbb{D}),\\ &\nabla_{d+1}\phi_{KIj}-\psi_+\nabla_{d+1}\phi_{+,KI\ell}\partial_\ell P_j-\psi_-\nabla_{d+1}\phi_{-,KIj}\in L^2(\mathbb{D}),
\end{aligned}\end{equation}
and
\begin{equation}
B_{Ij}=\partial_{y_K}\phi_{KIj}\quad\text{and}\quad \phi_{KIj}=-\phi_{IKj}\quad \text{in }\mathbb{D},
\end{equation}
where $1\leq K, I\leq d+1$, $B_{ij}=:\widehat{A}_{i\ell}\partial_\ell P_j-A_{i\ell}(\partial_\ell P_j+\partial_\ell \chi_j)$ and $B_{(d+1)j}=-\chi_{j}$  for $1\leq i,j,\ell\leq d$, and $\phi_{\pm}$ are given by Lemma 2.1. Note that we use the notation $y_{d+1}=s$.\end{thm}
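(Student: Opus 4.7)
The plan is to follow the template of Lemma 2.1 on each periodic cell and glue the two constructions across the interface using the decay estimates of Theorem 2.4 together with the global Poisson solvers in Lemma 2.6 and Lemma 2.7. First I would verify the conservation law $\sum_{K=1}^{d+1}\partial_{y_K}B_{Kj}=0$ in $\mathbb{D}$, which follows at once from the $\widehat{A}$-harmonicity of $P_j$ in (2.3) and the corrector equation (2.5); this is the integrability condition that allows $B_{\cdot j}$ to be represented as the $(y,s)$-divergence of a skew tensor.

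Guided by the asymptotics $\chi_{j}\sim\chi_{-,j}$ at $-\infty$ and $\chi_{j}\sim\chi_{+,j}+\vartheta_{j}\chi_{+,1}$ at $+\infty$, I would look for scalar potentials $f_{Ij}$ with $\Delta_{d+1}f_{Ij}=B_{Ij}$ in the ansatz
$$f_{Ij}=\psi_{+}f_{+,Ij}+\psi_{-}f_{-,Ij}+g_{Ij},$$
where $f_{\pm,Ij}$ are the periodic potentials produced by Lemma 2.1. The induced source for $g_{Ij}$ splits into (i) cross terms supported in the compact slab $\{|y_{1}|\le 1\}$ coming from $\nabla\psi_{\pm}$, and (ii) the bulk discrepancy $B_{Ij}-\psi_{+}B_{+,Ij}-\psi_{-}B_{-,Ij}$, whose $L^{2}$ norm is controlled by the exponential decay in Theorem 2.4. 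Both pieces therefore lie in $L^{1}(\mathbb{D})\cap L^{2}(\mathbb{D})$, so Lemma 2.6 delivers $g_{Ij}$ with $\nabla g_{Ij}\in L^{\infty}(\mathbb{D})+L^{2}(\mathbb{D})$ and $\nabla^{2}g_{Ij}\in L^{2}(\mathbb{D})$. Setting $\phi_{KIj}=\partial_{y_K}f_{Ij}-\partial_{y_I}f_{Kj}$, antisymmetry is immediate, and
$$\partial_{y_K}\phi_{KIj}=B_{Ij}-\partial_{y_I}\Bigl(\sum_{K}\partial_{y_K}f_{Kj}\Bigr).$$
The last bracket is harmonic on $\mathbb{D}$ by the conservation law, and is constant by the Liouville property on $\mathbb{R}\times\mathbb{T}^{d}$ once I check that it is bounded (using the constancy of $\sum_{K}\partial_{y_K}f_{\pm,Kj}$ inside Lemma 2.1 together with the mapping properties of $g_{Ij}$), yielding $\partial_{y_K}\phi_{KIj}=B_{Ij}$.

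To verify (2.32), the discrepancy $\phi_{KIj}-\psi_{+}\phi_{+,KIj}-\psi_{-}\phi_{-,KIj}$ decomposes into the compactly supported contribution $(\partial_{y_K}\psi_{\pm})f_{\pm,Ij}-(\partial_{y_I}\psi_{\pm})f_{\pm,Kj}$ and the residual $\partial_{y_K}g_{Ij}-\partial_{y_I}g_{Kj}$, which land in $L^{\infty}(\mathbb{D})$ and $L^{\infty}(\mathbb{D})+L^{2}(\mathbb{D})$ respectively. The time-derivative statement follows by running the same argument for $\partial_{s}g_{Ij}$, invoking the regularity (1.4) and the additional energy estimate (2.18) of Remark 2.3 to keep the source for the corresponding Poisson equation in $L^{2}$. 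The main obstacle I anticipate is the gauge fixing in the divergence identity: since $\nabla g_{Ij}$ is only controlled in $L^{\infty}+L^{2}$, showing pointwise boundedness of $\sum_{K}\partial_{y_K}f_{Kj}$ is not automatic, and one may need to subtract a harmonic correction so that $\sum_{K}\partial_{y_K}f_{Kj}\equiv 0$ before forming $\phi_{KIj}$.
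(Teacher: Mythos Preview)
Your strategy matches the paper's: solve $\Delta_{d+1}f_{Ij}=B_{Ij}$ via an ansatz splitting off the periodic potentials, control the remainder through the Poisson solvers, set $\phi_{KIj}=\partial_{K}f_{Ij}-\partial_{I}f_{Kj}$, and finish with a Liouville argument. Two points deserve correction.

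\emph{The ansatz on the $+$ side is wrong as written.} You subtract $\psi_{+}f_{+,Ij}$, but the asymptotic you quote, $\chi_{j}\sim\chi_{+,j}+\vartheta_{j}\chi_{+,1}$, forces the subtraction $\psi_{+}f_{+,I\ell}\,\partial_{\ell}P_{j}=\psi_{+}(f_{+,Ij}+\vartheta_{j}f_{+,I1})$ instead (this is exactly what the paper uses). With your ansatz the bulk discrepancy on $\mathbb{D}_{+}$ contains the periodic, non-decaying piece $\vartheta_{j}B_{+,I1}$, so it is \emph{not} in $L^{2}(\mathbb{D})$ and Lemma~2.6 cannot be invoked. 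Once the $\partial_{\ell}P_{j}$ factor is inserted, the discrepancy reduces to terms controlled by Theorem~2.4 and your plan goes through; note that on $\mathbb{D}_{-}$ one has $\partial_{\ell}P_{j}=\delta_{\ell j}$, so nothing changes there.

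\emph{The Liouville step.} Your concern that $\nabla g_{Ij}\in L^{\infty}+L^{2}$ does not give pointwise boundedness of $\sum_{K}\partial_{K}f_{Kj}$ is legitimate, but no gauge fix is needed. The paper observes that $\nabla^{2}\tilde f_{Ij}\in L^{2}(\mathbb{D})$ (this comes for free from Lemma~2.6/2.7) and that $\sum_{K}\partial_{K}f_{\pm,Kj}$ is constant; hence $\nabla\bigl(\sum_{K}\partial_{K}f_{Kj}\bigr)\in L^{2}(\mathbb{D})$, and a harmonic function on $\mathbb{R}\times\mathbb{T}^{d}$ with gradient in $L^{2}$ is constant. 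This also makes your last paragraph unnecessary: the second line of (2.32) is about $\nabla_{d+1}\phi$, which is a combination of second derivatives of the $f_{Ij}$, and the $L^{2}$ control is already contained in $\nabla^{2}\tilde f_{Ij}\in L^{2}(\mathbb{D})$; no separate argument for $\partial_{s}g_{Ij}$ is required.
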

\begin{proof}
Similar to the periodic case without an interface, we need to solve the equation
\begin{equation}\Delta_{d+1}f_{{I}j}=B_{{I}j}\quad \text{in }\mathbb{D},\quad \text{with }y_{d+1}=s,\   1\leq{I}\leq d+1\text{ and }1\leq j\leq d.\end{equation}

Following the method in \cite{MR3421758,MR3974127}, we decompose $f_{{I}j}$ as
\begin{equation*}
f_{{I}j}=\psi_+f_{+,{I}\ell}\partial_\ell P_j+\psi_-f_{-,{I}\ell}\partial_\ell P_j+\tilde{f}_{{I}j}.
\end{equation*}

Recall that $\nabla P$ is piecewise constant and possibly discontinuous only across the interface, where $\psi_{\pm}$ vanishes. Hence, a direct computation shows that

$$\begin{aligned}\Delta_{d+1}\tilde{f}_{ij}=&\Delta_{d+1}{f}_{ij}-\psi_+\Delta_{d+1}f_{+,{i}\ell}\partial_\ell P_j-\psi_-\Delta_{d+1}f_{-,{i}\ell}\partial_\ell P_j\\
&-2\left(\partial_{1}\psi_+\partial_{1}f_{+,i\ell}\partial_\ell P_j+\partial_{1}\psi_-\partial_{1}f_{-,i\ell}\partial_\ell P_j\right)\\
&-\partial_{11}\psi_+f_{+,i\ell}\partial_\ell P_j-\partial_{11}\psi_-f_{-,i\ell}\partial_\ell P_j.\end{aligned}$$
Using $(2.8)$ and $(2.34)$ yields that
$$\begin{aligned}
&\Delta_{d+1}{f}_{ij}-\psi_+\Delta_{d+1}f_{+,{i}\ell}\partial_\ell P_j-\psi_-\Delta_{d+1}f_{-,{i}\ell}\partial_\ell P_j\\
=&(1-\psi_--\psi_+)\left(\widehat{A}_{i\ell}\partial_\ell P_j-A_{i\ell}(\partial_\ell P_j+\partial_\ell \chi_j)\right)\\
&+\psi_+A_{ik}\left(\partial_k\chi_{+,\ell}\partial_\ell P_j-\partial_k\chi_{j}\right)+
\psi_-A_{ik}\left(\partial_k\chi_{-,\ell}\partial_\ell P_j-\partial_k\chi_{j}\right),
\end{aligned}$$
which immediately implies that
\begin{equation}\begin{aligned}
\Delta_{d+1}\tilde{f}_{ij}=&(1-\psi_--\psi_+)\left(\widehat{A}_{i\ell}\partial_\ell P_j-A_{i\ell}(\partial_\ell P_j+\partial_\ell \chi_j)\right)
+\psi_+A_{ik}\left(\partial_k\chi_{+,\ell}\partial_\ell P_j-\partial_k\chi_{j}\right)\\
&+\psi_-A_{ik}\left(\partial_k\chi_{-,\ell}\partial_\ell P_j-\partial_k\chi_{j}\right)-2\left(\partial_{1}\psi_+\partial_{1}f_{+,i\ell}\partial_\ell P_j+\partial_{1}\psi_-\partial_{1}f_{-,i\ell}\partial_\ell P_j\right)\\
&-\partial_{11}\psi_+f_{+,i\ell}\partial_\ell P_j-\partial_{11}\psi_-f_{-,i\ell}\partial_\ell P_j.
\end{aligned}\end{equation}
Similarly, there holds
\begin{equation}\begin{aligned}
\Delta_{d+1}\tilde{f}_{(d+1)j}=&-(1-\psi_--\psi_+)\chi_j +\psi_+\left(\chi_{j}-\chi_{+,\ell} \partial_\ell P_j\right)\\
&+\psi_-\left(\chi_{j}-\chi_{-,\ell} \partial_\ell P_j\right)
-\partial_{11}\psi_+f_{+,(d+1)\ell}\partial_\ell P_j-\partial_{11}\psi_-f_{-,(d+1)\ell}\partial_\ell P_j\\
&-2\left(\partial_{1}\psi_+\partial_{1}f_{+,(d+1)\ell}\partial_\ell P_j+\partial_{1}\psi_-\partial_{1}f_{-,(d+1)\ell}\partial_\ell P_j\right).
\end{aligned}\end{equation}
To continue, we need to find a solution $\tilde{f}_{ij}$ to Equation $(2.35)$. Set
$$\begin{aligned}
&M_{1,ij}=(1-\psi_--\psi_+)\left(\widehat{A}_{i\ell}\partial_\ell P_j-A_{i\ell}(\partial_\ell P_j+\partial_\ell \chi_j)\right),\\
M_{2,ij}&=\psi_+A_{ik}\left(\partial_k\chi_{+,\ell}\partial_\ell P_j-\partial_k\chi_{j}\right)+\psi_-A_{ik}\left(\partial_k\chi_{-,\ell}\partial_\ell P_j-\partial_k\chi_{j}\right),\\
M_{3,ij}=-\partial_{11}&\psi_+f_{+,i\ell}\partial_\ell P_j-\partial_{11}\psi_-f_{-,i\ell}\partial_\ell P_j-2\left(\partial_{1}\psi_+\partial_{1}f_{+,i\ell}\partial_\ell P_j+\partial_{1}\psi_-\partial_{1}f_{-,i\ell}\partial_\ell P_j\right).
\end{aligned}$$

Firstly, it follows from Lemma 2.7 that there exists a $\mathbb{D}$-periodic solution $\tilde{f}_{1,ij}$ to the equation $\Delta_{d+1}\tilde{f}_{1,ij}=M_{1,ij}$ in $\mathbb{D}$, satisfying $\nabla_{d+1} \tilde{f}_{1,ij}\in H^1(\mathbb{D})$.

Secondly, following the idea of $(2.28)$, we have $M_{2,ij}\in L^1(\mathbb{D})\cap L^2(\mathbb{D})$ due to $(2.9)$ and $(2.22)$. Then according to
Lemma 2.6, there exists a $\mathbb{D}$-periodic solution $\tilde{f}_{2,ij}$ to the equation $\Delta_{d+1}\tilde{f}_{2,ij}=M_{2,ij}$ in $\mathbb{D}$,
satisfying $\nabla_{d+1} \tilde{f}_{2,ij}\in L^\infty(\mathbb{D})+ L^2(\mathbb{D})$ and $\nabla^2_{d+1} \tilde{f}_{2,ij}\in L^2(\mathbb{D})$.

Thirdly, it follows from Lemma 2.7 again that there exists a $\mathbb{D}$-periodic solution $\tilde{f}_{3,ij}$ to the equation $\Delta_{d+1}\tilde{f}_{3,ij}=M_{3,ij}$ in $\mathbb{D}$, satisfying $\nabla_{d+1} \tilde{f}_{3,ij}\in H^1(\mathbb{D})$.

In conclusion, we have constructed a $\mathbb{D}$-periodic solution $\tilde{f}_{ij}$ to the Equation $(2.35)$, satisfying $\nabla_{d+1}\tilde{f}_{ij}\in L^\infty(\mathbb{D})+ L^2(\mathbb{D})$ and $\nabla^2_{d+1} \tilde{f}_{ij}\in L^2(\mathbb{D})$.

Totally similar to the discussion above after noting $(2.29)$, we can construct a solution $\tilde{f}_{(d+1)j}$ to
the Equation $(2.36)$, satisfying $\nabla_{d+1}\tilde{f}_{(d+1)j}\in L^\infty(\mathbb{D})+ L^2(\mathbb{D})$ and $\nabla^2_{d+1} \tilde{f}_{(d+1)j}\in L^2(\mathbb{D})$. Actually, the functions
$\tilde{f}_{(d+1)j}$ have better regularity which we do not need.

Therefore, back to $(2.34)$, we have constructed a solution $f_{Ij}$ to the Equation $(2.34)$, satisfying
\begin{equation}\begin{aligned}
&\nabla_{d+1}f_{Ij}-\psi_+\nabla_{d+1}f_{+,{I}\ell}\partial_\ell P_j-\psi_-\nabla_{d+1}f_{-,{I}\ell}\partial_\ell P_j\in L^\infty(\mathbb{D})+L^2(\mathbb{D}),\\ &\nabla^2_{d+1}f_{Ij}-\psi_+\nabla^2_{d+1}f_{+,{I}\ell}\partial_\ell P_j-\psi_-\nabla^2_{d+1}f_{-,{I}\ell}\partial_\ell P_j\in L^2(\mathbb{D}).
\end{aligned}\end{equation}
 To proceed, for $1\leq K,I\leq d+1$, set
\begin{equation}B_{Ij}=\partial_K\left(\partial_K f_{Ij}-\partial_If_{Kj}\right)+\partial_I\left(\partial_Kf_{Kj}\right).\end{equation}
Note that by $(2.5)$,
$$\sum_{I=1}^{d+1}\partial_IB_{Ij}=\sum_{i=1}^{d}\partial_iB_{ij}-\partial_s \chi_j=0\quad \text{in }\mathbb{D}.$$
It follows from $(2.34)$ that
$$\sum_{I=1}^{d+1}\partial_I f_{Ij}$$
is a harmonic function in $\mathbb{R}^{d+1}$. To see that it is a constant, we first note that
$$\sum_{I=1}^{d+1}\partial_I f_{+,Ij},\ \sum_{I=1}^{d+1}\partial_I f_{-,Ij}$$
are constants (see Lemma 2.1). Then according to $(2.37)_2$, we know that
$$\begin{aligned}
&\|\nabla_{d+1} \sum_{I=1}^{d+1}\partial_I f_{Ij}\|_{L^2(\mathbb{D_+})}=\|\nabla_{d+1} \sum_{I=1}^{d+1}\partial_I f_{Ij}-\nabla_{d+1} \sum_{I=1}^{d+1}\partial_I f_{+,I\ell}\partial_\ell P_j\|_{L^2(\mathbb{D_+})}<\infty,\\
&\|\nabla_{d+1} \sum_{I=1}^{d+1}\partial_I f_{Ij}\|_{L^2(\mathbb{D_-})}=\|\nabla_{d+1} \sum_{I=1}^{d+1}\partial_I f_{Ij}-\nabla_{d+1} \sum_{I=1}^{d+1}\partial_I f_{-,I\ell}\partial_\ell P_j\|_{L^2(\mathbb{D_-})}<\infty,
\end{aligned}$$
which implies that $\nabla_{d+1} \sum_{I=1}^{d+1}\partial_I f_{Ij}\in L^2(\mathbb{D})$. Hence $\sum_{I=1}^{d+1}\partial_I f_{Ij}$ is a constant since it is a harmonic function in $\mathbb{D}$.

 Consequently, by $(2.38)$, we obtain
$$B_{Ij}=\partial_{K}\phi_{KIj}\quad \text{in }\mathbb{D},$$ where
$$\phi_{KIj}=\partial_{K}f_{Ij}-\partial_{I}f_{Kj}$$ is $\mathbb{D}$-periodic. It is easy to see $\phi_{KIj}=-\phi_{IKj}$. Moreover, the desired estimates $(2.32)$ follow from $(2.1)$, $(2.37)$ and the facts $\phi_{\pm,KIj}=\partial_{K}f_{\pm,Ij}-\partial_{I}f_{\pm,Kj}$.
\end{proof}

The next result recovers the regularity of $u_0$ after suitable transformation. Precisely, as the same
observation in
\cite{MR3974127},
 if we set $\tilde{u}_0(z,t)=:u_0(P^{-1}(z),t)$ and
$\tilde{U}_0(x,t)=:(\nabla P(x))^{-1}\cdot \nabla u_0(x,t)$ for $0<t\leq T$, then it is easy to see that
$\partial_{z_j}\tilde{u}_0=\tilde{U}_{0,j}(P^{-1}(z),t)$. Therefore, due to $\widehat{A}\nabla
u_0=\widehat{A}\nabla P\cdot (\nabla P)^{-1}\nabla u_0$ and the transmission conditions through the interface (see $(2.4)$), the function $\tilde{U}_{0,j}$ (as well as $\partial_{z_j}\tilde{u}_0$) is continuous across the
interface $\mathcal{I}$ at least for sufficient regular source term $f$.

For $\tilde{u}_0(z,t)$, we have the following $W^{2,1}_2$-estimates.

\begin{thm}[$W^{2,1}_2$ estimates]Let $u_0$ be the solution to the homogenized equation $(1.7)$ with $f\in L^2(\Omega_T)$, and denote $\tilde{u}_0(z,t)=:u_0(P^{-1}(z),t)$, then
\begin{equation}
||\tilde{u}_0||_{W^{2,1}_2(\Omega_T)}\leq C ||f||_{L^2(\Omega_T)},
\end{equation}
where the constant $C$ depends only on $d$, $\kappa$ and $T$. Moreover, if we additionally assume $f_t\in L^2(\Omega_T)$, then
\begin{equation}
\|\tilde{\eta} \nabla\partial_t{u}_0\|_{L^2(\Omega_T)}\leq C \varepsilon^{-1}||f||_{L^2(\Omega_T)\cap L^\infty(0,T;L^2(\mathbb{R}^d))}+C||f_t||_{L^2(\Omega_T)},
\end{equation}
where the constant $C$ depends only on $d$, $\kappa$ and $T$, and $\tilde{\eta}(t)\in [0,1]$ is a cut-off function  satisfying $\tilde{\eta}=0$ if $t\leq 2 \varepsilon^2$ and $\tilde{\eta}=1$ if $t\geq  4 \varepsilon^2$ with $|\tilde{\eta}'|\leq C\varepsilon^{-2}$.
\end{thm}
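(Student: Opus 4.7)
My strategy hinges on the change of variable $z = P(x)$, which is piecewise linear with constant Jacobian $J = \nabla P$ on each side of the interface ($J = I$ on $\{x_1<0\}$ and $J_{ij} = \delta_{ij} + \vartheta_j\delta_{i1}$ on $\{x_1>0\}$). Pulling back the weak formulation of (1.7) under this map, I obtain that $\tilde u_0(z,t)=u_0(P^{-1}(z),t)$ satisfies
$$\partial_t \tilde u_0 - \operatorname{div}_z(\tilde A\,\nabla_z\tilde u_0) = \tilde f\quad\text{in }\mathbb{R}^d\times(0,T),\qquad \tilde u_0(\cdot,0)=0,$$
with $\tilde A = \widehat{A_-}$ on $\{z_1<0\}$, $\tilde A = J^T\widehat{A_+}J$ on $\{z_1>0\}$ (piecewise \emph{constant} and uniformly elliptic), and $\tilde f(z,t) = f(P^{-1}(z),t)$ satisfying $\|\tilde f\|_{L^2}\le C\|f\|_{L^2}$. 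The key observation --- which is precisely why the functions $P_j$ were introduced --- is that the transmission conditions (2.4), combined with the definition of $\tilde U_0$, imply $\nabla_z\tilde u_0$ is \emph{continuous} across $\{z_1=0\}$ (not merely tangentially continuous).

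For (2.39), I apply the classical constant-coefficient $W^{2,1}_2$ parabolic estimate in each half-space $\{\pm z_1>0\}\times(0,T)$ separately. This yields $\partial_t\tilde u_0,\nabla_z^2\tilde u_0\in L^2$ on each side, with norms controlled by $\|\tilde f\|_{L^2}\le C\|f\|_{L^2}$. Because $\nabla_z\tilde u_0$ has a continuous trace across $\{z_1=0\}$, the distributional second derivative $\nabla_z^2\tilde u_0$ carries no singular measure on the interface, so the two half-space estimates glue into the global bound (2.39). For the time-derivative estimate (2.40), I first differentiate (1.7) in $t$: $v:=\partial_t u_0$ satisfies $\partial_t v - \operatorname{div}(\widehat A\nabla v) = f_t$ with $v(\cdot,0) = f(\cdot,0)$, and a standard energy estimate on this equation yields
$$\|v\|_{L^\infty(0,T;L^2)}\le \|f(\cdot,0)\|_{L^2} + CT^{1/2}\|f_t\|_{L^2}\le C\|f\|_{L^\infty(0,T;L^2)} + C\|f_t\|_{L^2}.$$
I then set $w:=\tilde\eta(t)v$, which solves $\partial_t w-\operatorname{div}(\widehat A\nabla w)=\tilde\eta f_t + \tilde\eta' v$ with $w(\cdot,0)=0$; since $\tilde\eta$ is time-only, $\nabla w = \tilde\eta\nabla v$, and a further energy estimate gives $\|\tilde\eta\nabla\partial_t u_0\|_{L^2}\le CT^{1/2}(\|\tilde\eta f_t\|_{L^2} + \|\tilde\eta' v\|_{L^2})$. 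Finally, because $\operatorname{supp}(\tilde\eta')\subset[2\varepsilon^2,4\varepsilon^2]$ with $|\tilde\eta'|\le C\varepsilon^{-2}$, we have $\|\tilde\eta' v\|_{L^2(\Omega_T)}\le C\varepsilon^{-1}\|v\|_{L^\infty(0,T;L^2)}$, which combined with the bound on $\|v\|_{L^\infty L^2}$ produces (2.40).

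The main obstacle is the first step: rigorously justifying that $\nabla_z\tilde u_0$ is continuous across the interface, and hence that gluing the two half-space $W^{2,1}_2$ estimates produces a global bound with no interface-concentrated distribution in $\nabla_z^2\tilde u_0$. Continuity of the normal component $\partial_{z_1}\tilde u_0$ does not follow from the PDE alone --- it rests on the fact that $(\nabla P)^{-1}\nabla u_0$ absorbs exactly the jump of $\nabla u_0$ prescribed by $\nabla P_j$, so that the flux-continuity in (2.4), together with tangential continuity of $\nabla u_0$, forces a single-valued trace of $\tilde U_0$ and thereby removes any Dirac mass from $\nabla_z^2\tilde u_0$ on $\{z_1=0\}$.
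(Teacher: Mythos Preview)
Your argument for (2.40) is essentially correct and close in spirit to the paper's, with a different intermediate step: you bound $\|\partial_t u_0\|_{L^\infty(0,T;L^2)}$ via the differentiated equation, whereas the paper instead bounds $\|\nabla u_0\|_{L^2(\Omega_{4\varepsilon^2})}$ through an energy identity that uses the first estimate (2.39). One small refinement: to recover (2.40) exactly (rather than with an extra $\varepsilon^{-1}\|f_t\|$), note that you only need $\|v\|_{L^\infty(0,4\varepsilon^2;L^2)}$, not on all of $(0,T)$, and on that short interval the contribution of $f_t$ is $O(\varepsilon)\|f_t\|_{L^2}$.

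However, your plan for (2.39) has a genuine gap. You propose to apply the constant-coefficient $W^{2,1}_2$ estimate in each half-space $\{\pm z_1>0\}\times(0,T)$ to obtain $\|\tilde u_0\|_{W^{2,1}_2}\le C\|\tilde f\|_{L^2}$ on each side. But the half-space $W^{2,1}_2$ estimate requires a boundary condition on $\{z_1=0\}$ and control of the corresponding boundary data; you have specified neither, and the interface trace of $\tilde u_0$ is not a~priori controlled by $\|f\|_{L^2}$. The continuity of $\nabla_z\tilde u_0$ across the interface tells you only that the distributional $\nabla_z^2\tilde u_0$ carries no singular layer --- it does not by itself supply a quantitative $L^2$ bound on $\nabla_z^2\tilde u_0$ up to the interface.

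The paper's route is different and sidesteps this issue. After the change of variables the weak form actually reads
\[
|J(z)|^{-1}\partial_t\tilde u_0 - \operatorname{div}_z\bigl(|J(z)|^{-1}\tilde A(z)\,\nabla_z\tilde u_0\bigr) = |J(z)|^{-1}\tilde f,
\]
with the Jacobian factor $|J|^{-1}$ (which you omitted --- $|J|$ is $1$ on one side and $1+\vartheta_1$ on the other). The key observation is that $|J|^{-1}\tilde A$ is \emph{divergence-free in $\mathbb{R}^d$}: this is precisely the $\widehat A$-harmonicity of $P_j$, i.e.\ (2.3). Hence one may pass globally to non-divergence form,
\[
\partial_t\tilde u_0 - \tilde A_{ij}(z)\,\partial_{z_iz_j}\tilde u_0 = \tilde f \qquad\text{in } \mathbb{R}^d\times(0,T),
\]
with $\tilde A_{ij}$ piecewise constant and depending only on $z_1$. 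The paper then invokes the Kim--Krylov $W^{2,1}_2$ theory for parabolic equations in non-divergence form with coefficients measurable in one spatial variable, which gives the global estimate (2.39) directly --- no half-space decomposition and no interface-trace argument. This is the missing ingredient in your plan.
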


\begin{proof}
Note that the function $\tilde{u}_0(z,t)=:u_0(P^{-1}(z),t)$
satisfies the following parabolic equation:
\begin{equation}|J(z)|^{-1}\partial_t\tilde{u}_0 -\operatorname{div}\left(|J(z)|^{-1} \tilde{A}(z,t) \cdot \nabla \tilde{u}\right)=|J(z)|^{-1} f\left(P^{-1}(z)\right)\ \text{in}\  \mathbb{R}^d\times (0,T),\end{equation}
with $\tilde{u}_0(z,0)=0$ for $z\in\mathbb{R}^d$,
where $\tilde{A}(z,t)$ is defined by
$$\tilde{A}(z):=\left(\nabla P\left(P^{-1}(z)\right)\right)^{T} \cdot \widehat{A}\left(P^{-1}(z)\right) \cdot \nabla P\left(P^{-1}(z)\right),$$
and $J(z)$ is the Jacobian of $P$ evaluated on $P^{-1}(z)$. By construction, $\tilde{A}(z)$ is elliptic and constant on the half-spaces $\mathbb{R}_\pm^*\times \mathbb{R}^{d-1}$, and the product $|J(x)|^{-1} \tilde{A}(x)$ is divergence-free in $\mathbb{R}^d$ due to $(2.3)$. Therefore, we can rewrite the Equation $(2.41)$ as
\begin{equation}\partial_t\tilde{u}_0(z,t)-\tilde{A}_{ij}(z)\partial_{ij}\tilde{u}(z,t)=f(P^{-1}(z),t)\quad \text{in}\quad  \mathbb{R}^d\times (0,T).\end{equation}

\noindent
Thus, applying \cite[Theorem 2.2]{MR2300337} yields that
\begin{equation*}||\tilde{u}_0||_{W^{2,1}_2(\mathbb{R}^d\times (0,T))}\leq C||f||_{L^2(\mathbb{R}^d\times (0,T))},\end{equation*}
which completes the proof of $(2.39)$.
\noindent
Moreover, $\tilde{\eta}\partial_t{u}_0$ satisfies
\begin{equation}
\partial_t (\tilde{\eta} \partial_t{u}_0) -\operatorname{div}(\widehat{A}\nabla (\tilde{\eta} \partial_t{u}_0))=\partial_t(\tilde{\eta} f)+\operatorname{div}(\widehat{A}\nabla {u}_0\partial_t\tilde{\eta})\quad \text{in}\quad\mathbb{R}^d\times (0,T),\end{equation}
with $(\tilde{\eta} \partial_t{u}_0)(z,0)=0$ for $z\in\mathbb{R}^d$.\\

Then, energy estimates yield that
\begin{equation}\begin{aligned}
&\sup_{0\leq t\leq T}\int_{\mathbb{R}^d}|\tilde{\eta} \partial_t{u}_0|^2+\iint_{\Omega_T}|\tilde{\eta} \nabla \partial_t u_0|^2\\
\leq &C\varepsilon^{-2}\int_{0}^{4\varepsilon^2}\int_{\mathbb{R}^d}|f|\cdot |\tilde{\eta} \partial_t{u}_0|+C\iint_{\Omega_T}|f_t|\cdot |\tilde{\eta} \partial_t{u}_0|+C\varepsilon^{-2}\int_{0}^{4\varepsilon^2}\int_{\mathbb{R}^d}|\nabla u_0|\cdot |\tilde{\eta} \nabla \partial_t u_0|\\
\leq &C\varepsilon^{-1}||f||_{L^2(\Omega_T)}\sup_{0\leq t\leq T}||\tilde{\eta} \partial_t{u}_0(\cdot,t)||_{L^2(\mathbb{R}^d)}+CT^{1/2}||f_t||_{L^2(\Omega_T)}\sup_{0\leq t\leq T}||\tilde{\eta} \partial_t{u}_0(\cdot,t)||_{L^2(\mathbb{R}^d)}\\
&\quad\quad+C\varepsilon^{-2}||\tilde{\eta} \nabla\partial_t{u}_0||_{L^2(\Omega_{4\varepsilon^2})}||\nabla u_0||_{L^2(\Omega_{4\varepsilon^2})}.
\end{aligned}\end{equation}
\noindent
To see the estimate on $||\nabla u_0||^2_{L^2(\Omega_{4\varepsilon^2})}$, we note that
\begin{equation*}
\int_{\mathbb{R}^d} \widehat{A} \nabla u_{0} \cdot \nabla u_{0}=-\int_{\mathbb{R}^d} \partial_{t} u_{0} \cdot u_{0}+\int_{\mathbb{R}^d} f \cdot u_{0},
\end{equation*}
then \begin{equation}
\begin{aligned}
\kappa \int_{0}^{4\varepsilon^2} \int_{\mathbb{R}^d}\left|\nabla u_{0}\right|^{2} & \leq \int_{0}^{4\varepsilon^2} \int_{\mathbb{R}^d}\left|\partial_{t} u_{0} \| u_{0}\right|+\int_{0}^{4\varepsilon^2} \int_{\mathbb{R}^d}|f|\left|u_{0}\right| \\
& \leq\left\{\left\|\partial_{t} u_{0}\right\|_{L^{2}\left(\Omega_{4\varepsilon^2}\right)}+\|f\|_{L^{2}\left(\Omega_{4\varepsilon^2}\right)}\right\}
\left(\int_{0}^{4\varepsilon^2} \int_{\mathbb{R}^d}\left|u_{0}\right|^{2}\right)^{1 / 2} \\
& \leq C\varepsilon\|f\|_{L^{2}\left(\Omega_{4\varepsilon^2}\right)}\sup_{0\leq t\leq T}\left\|u_{0}(\cdot, t)\right\|_{L^{2}(\mathbb{R}^d)}\\
&\leq C\varepsilon^2||f||_{L^{\infty}(0,T;L^2(\mathbb{R}^d))}||f||_{L^{2}(\Omega_T)},
\end{aligned}
\end{equation}
where we have used the $W^{2,1}_2$-estimates $\|\partial_{t} \tilde{u}_{0}\|_{L^{2}\left(\Omega_{4\varepsilon^2}\right)}\leq C \|f\|_{L^{2}\left(\Omega_{4\varepsilon^2}\right)}$ with $\tilde{u}_{0}(z,0)=0$ for $z\in \mathbb{R}^d$.

Consequently, combining $(2.44)$-$(2.45)$ yields the desired estimate
\begin{equation*}
\|\tilde{\eta} \nabla\partial_t{u}_0\|_{L^2(\Omega_T)}\leq C \varepsilon^{-1}||f||_{L^2(\Omega_T)\cap L^\infty(0,T;L^2(\mathbb{R}^d))}+C||f_t||_{L^2(\Omega_T)}.
\end{equation*}
\end{proof}

\section{Convergence rates}
\subsection{Smoothing Operator}
 Fix two nonnegative function $\rho_1\in C_0^\infty(B(0,1/2))$ and $\rho_2\in C_0^\infty((-1/2,1/2))$, where $B(0,1/2)$ is the ball in $\mathbb{R}^d$ of radius 1/2 centered at the origin 0,  such that
$$\int_{\mathbb{R}^d}\rho_1(y) dy=1\quad\text{and}\quad\int_{\mathbb{R}}\rho_2(s) ds=1.$$
For $\varepsilon>0$, let $\rho_{1,\varepsilon}(y)=\varepsilon^{-d}\rho(y/\varepsilon)$ and $\rho_{2,\varepsilon}(s)=\varepsilon^{-2}\rho(s/\varepsilon)$. For a multi-variable function $f(x,t)$ on $\mathbb{R}^{d+1}$, define
\begin{equation}\begin{aligned}
S^x_\varepsilon(f)(x,t)=\rho_{1,\varepsilon}\ast f(x,t)=\int_{\mathbb{R}^d}f(x-y,t)\rho_{1,\varepsilon}(y)dy,\\
S^t_\varepsilon(f)(x,t)=\rho_{2,\varepsilon}\ast f(x,t)=\int_{\mathbb{R}}f(x,t-s)\rho_{2,\varepsilon}(s)ds.\end{aligned}\end{equation}

Moreover, we use the following notation:

\begin{equation}S_\varepsilon=:S_\varepsilon^t\circ S^x_\varepsilon=S^x_\varepsilon\circ S_\varepsilon^t. \end{equation}

In the following lemmas, we collect and state some basic and useful estimates related to the smoothing operator.

\begin{lemma}
Let $S_\varepsilon$ be defined as in $(3.1)$ and $(3.2)$. For $1\leq p<\infty$, then

\begin{equation}
\begin{aligned}
&\quad\quad\quad\quad \quad  \left\|S_{\varepsilon}(f)\right\|_{L^{2}\left(\mathbb{R}^{d+1}\right)} \leq\|f\|_{L^{2}\left(\mathbb{R}^{d+1}\right)},\\
&\quad \quad\quad\quad \varepsilon^{2}\left\|\partial_{t} S_{\varepsilon}(f)\right\|_{L^{2}(\mathbb{R}^{ d+1})} \leq C\|f\|_{L^{2}\left(\mathbb{R}^{d+1}\right)},\\
&\varepsilon\left\|\nabla S_{\varepsilon}(f)\right\|_{L^{2}\left(\mathbb{R}^{d+1}\right)}+\varepsilon^{2}\left\|\nabla^{2} S_{\varepsilon}(f)\right\|_{L^{2}\left(\mathbb{R}^{d+1}\right)} \leq C\|f\|_{L^{2}\left(\mathbb{R}^{d+1}\right)},\\
&\left\| S_{\varepsilon}(\nabla f)-\nabla f\right\|_{L^{p}(\mathbb{R}^{d+1})} \leq C \varepsilon\left\{\left\|\nabla^{2} f\right\|_{L^{p}(\mathbb{R}^{d+1})}+\left\|\partial_{t} f\right\|_{L^{p}\left(\mathbb{R}^{d+1}\right)}\right\},
\end{aligned}
\end{equation}
where $C$ depends only on $d$ and $p$.
\end{lemma}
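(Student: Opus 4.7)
The four inequalities are standard facts about the parabolically scaled mollifier $S_\varepsilon = \rho_{2,\varepsilon}\ast_t\rho_{1,\varepsilon}\ast_x$, and my plan is to obtain them all from Young's convolution inequality, shifting derivatives onto the mollifiers whenever possible.

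For the first inequality, since $\|\rho_{1,\varepsilon}\|_{L^1(\mathbb{R}^d)} = \|\rho_{2,\varepsilon}\|_{L^1(\mathbb{R})} = 1$, two applications of Young's inequality in succession give $\|S_\varepsilon f\|_{L^2(\mathbb{R}^{d+1})} \leq \|f\|_{L^2(\mathbb{R}^{d+1})}$. For the second and third inequalities, I commute the derivatives past the convolutions and let them fall onto the mollifiers:
$$\partial_t S_\varepsilon f = S_\varepsilon^x\bigl[(\partial_s\rho_{2,\varepsilon})\ast_t f\bigr], \qquad \nabla^k S_\varepsilon f = S_\varepsilon^t\bigl[(\nabla^k\rho_{1,\varepsilon})\ast_x f\bigr], \quad k=1,2.$$
A direct change of variables gives the scalings $\|\partial_s\rho_{2,\varepsilon}\|_{L^1(\mathbb{R})} \leq C\varepsilon^{-2}$ and $\|\nabla^k\rho_{1,\varepsilon}\|_{L^1(\mathbb{R}^d)} \leq C\varepsilon^{-k}$, and combining these with Young's inequality and the $L^2$-contractivity of $S_\varepsilon^x$ and $S_\varepsilon^t$ produces the claimed bounds.

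For the fourth inequality I would telescope
$$S_\varepsilon(\nabla f) - \nabla f = S_\varepsilon^t\bigl(S_\varepsilon^x(\nabla f) - \nabla f\bigr) + \bigl(S_\varepsilon^t(\nabla f) - \nabla f\bigr),$$
and bound each piece by the fundamental theorem of calculus paired with Minkowski's integral inequality. For the spatial piece, writing
$$S_\varepsilon^x(\nabla f)(x,t) - \nabla f(x,t) = -\int_{\mathbb{R}^d}\rho_{1,\varepsilon}(y)\int_0^1 \nabla^2 f(x - \tau y, t)\cdot y\,d\tau\,dy$$
and using $|y|\leq \varepsilon/2$ on $\mathrm{supp}(\rho_{1,\varepsilon})$ together with the $L^p$-contractivity of the outer $S_\varepsilon^t$ delivers $\leq C\varepsilon\|\nabla^2 f\|_{L^p}$. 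For the temporal piece, I would use the commutation $S_\varepsilon^t(\nabla f) - \nabla f = \nabla(S_\varepsilon^t f - f)$ together with the analogous fundamental-theorem representation of $S_\varepsilon^t f - f$ in the variable $s$ and the support bound $|s|\leq \varepsilon^2/2$ on $\mathrm{supp}(\rho_{2,\varepsilon})$ to extract the correct $\varepsilon$-power against $\|\partial_t f\|_{L^p}$.

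The main technical point is the temporal piece of the fourth inequality: the most naive fundamental-theorem bound yields $C\varepsilon^2\|\partial_t\nabla f\|_{L^p}$, so matching the stated right-hand side $C\varepsilon\|\partial_t f\|_{L^p}$ requires either invoking the parabolic scaling $[t]\sim \varepsilon^2$ against $[x]\sim\varepsilon$ or trading one spatial derivative against a power of $\varepsilon$ by reorganising the argument around $S_\varepsilon^t f - f$ (which satisfies $\|S_\varepsilon^t f - f\|_{L^p} \leq C\varepsilon^2\|\partial_t f\|_{L^p}$). This is the one step that requires careful bookkeeping; everything else reduces to routine applications of Young's and Minkowski's inequalities.
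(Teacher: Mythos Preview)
Your treatment of the first three inequalities is correct and matches the standard argument the paper implicitly invokes by citation. The issue is in the fourth inequality, specifically in your temporal piece, and you already sense the problem without quite closing it.

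With your telescoping
\[
S_\varepsilon(\nabla f)-\nabla f
= S_\varepsilon^t\bigl(S_\varepsilon^x(\nabla f)-\nabla f\bigr)+\bigl(S_\varepsilon^t(\nabla f)-\nabla f\bigr),
\]
the second term equals $\nabla(S_\varepsilon^t f-f)$ and contains \emph{no spatial mollifier at all}. There is therefore nowhere to transfer the $\nabla$: the fundamental-theorem bound necessarily gives $C\varepsilon^2\|\nabla\partial_t f\|_{L^p}$, which is not controlled by $\|\nabla^2 f\|_{L^p}+\|\partial_t f\|_{L^p}$ in general. Your suggestion of ``trading one spatial derivative against a power of $\varepsilon$'' is the right instinct, but it cannot be executed within this decomposition because there is no $\rho_{1,\varepsilon}$ present to absorb the derivative.

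The paper's remedy is simply to reverse the order of the telescoping:
\[
S_\varepsilon(\nabla f)-\nabla f
= \bigl(S_\varepsilon^x(\nabla f)-\nabla f\bigr)+\bigl(S_\varepsilon(\nabla f)-S_\varepsilon^x(\nabla f)\bigr).
\]
Now the temporal piece is $S_\varepsilon^x\bigl(S_\varepsilon^t(\nabla f)-\nabla f\bigr)$ with $S_\varepsilon^x$ on the outside. Writing this as $(\nabla\rho_{1,\varepsilon})\ast_x(S_\varepsilon^t f-f)$ and applying Young's inequality gives
\[
\|\nabla\rho_{1,\varepsilon}\|_{L^1}\,\|S_\varepsilon^t f-f\|_{L^p}
\le C\varepsilon^{-1}\cdot C\varepsilon^2\|\partial_t f\|_{L^p}
= C\varepsilon\|\partial_t f\|_{L^p},
\]
which is exactly the bound claimed. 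The paper phrases this slightly differently, through a pointwise Poincar\'e inequality in $t$ followed by the identity $\partial_t S_\varepsilon^x(\nabla f)=(\nabla\rho_{1,\varepsilon})\ast\partial_t f$, but the mechanism is the same: the spatial mollifier must sit outside the temporal difference so that the stray gradient can be cashed in for $\varepsilon^{-1}$. Once you swap the order of your telescope, the rest of your outline goes through.
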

\begin{proof}The proofs of $(3.3)_1$-$(3.3)_3$ can be found in \cite[Lemma 3.1,Lemma 3.2]{MR3596717} and we need only to prove $(3.3)_4$.
Actually, the estimate $(3.3)_4$ has already been obtained in \cite{xu2020convergence}, and we provide it for completeness.
Firstly, we note that
\begin{equation}\begin{aligned}
\left\| S_{\varepsilon}(\nabla f)-\nabla f\right\|_{L^{p}(\mathbb{R}^{d+1})}\leq \left\| S^x_{\varepsilon}(\nabla f)-\nabla f\right\|_{L^{p}(\mathbb{R}^{d+1})}+\left\| S_{\varepsilon}(\nabla f)-S^x_\varepsilon(\nabla f)\right\|_{L^{p}(\mathbb{R}^{d+1})}.
\end{aligned}\end{equation}
For the first term in the r.h.s. of  $(3.4)$, using \cite[Proposition 3.1.6]{MR3838419} yields that

$$\left\| S^x_{\varepsilon}(\nabla f)-\nabla f\right\|_{L^{p}(\mathbb{R}^{d+1})}\leq C \varepsilon \left\|\nabla^{2} f\right\|_{L^{p}(\mathbb{R}^{d+1})}.$$
For the second term in the r.h.s. of $(3.4)$, using the following Poincar\'{e}'s inequality
\begin{equation*}
\int_{B(x, r)}|u(y)-u(z)| d y \leq C r^{d} \int_{B(x, r)}|\nabla u(y)||y-z|^{1-d} d y,
\end{equation*}
for any $u\in C^1(B(x, r))$ and $z\in B(x, r)$, we have
\begin{equation}
\begin{aligned}
\left|S_{\varepsilon}^{x}(\nabla f)(t)-S_{\varepsilon}(\nabla f)(t)\right| & \leq C \fint_{\left\{|\varsigma-t| \leq \frac{\varepsilon^{2}}{2}\right\}}\left|S_{\varepsilon}^{x}(\nabla f)(t)-S_{\varepsilon}^{x}(\nabla f)(\varsigma)\right| d \varsigma \\
& \leq C \varepsilon^{2} \fint_{\left\{|\varsigma-t| \leq \frac{\varepsilon^{2}}{2}\right\}}\left|\partial_{t} S_{\varepsilon}^{x}(\nabla f)(\varsigma)\right| d \varsigma,
\end{aligned}
\end{equation}
which, together with Young's inequality, yields that
\begin{equation}
\begin{aligned}
\left\|S_{\varepsilon}^{x}(\nabla f)-S_{\varepsilon}(\nabla f)\right\|_{L^{p}(\mathbb{R}^{d+1} )} & \leq C \varepsilon\left\|\fint_{\left\{|\varsigma-t| \leq \frac{\varepsilon^{2}}{2}\right\}}\left|\left(\nabla \rho_{1}\right)_{\varepsilon} *\left(\partial_{t} f\right)(\varsigma)\right| d \varsigma\right\|_{L^{p}(\mathbb{R} ; L^{p}(\mathbb{R}^{d}))} \\
& \leq C \varepsilon\left\|\partial_{t} f\right\|_{L^{p}(\mathbb{R} ; L^{p}(\mathbb{R}^{d}))}.
\end{aligned}
\end{equation}

Consequently, combining $(3.4)$-$(3.6)$ yields the desired estimate $(3.3)_4$.
\end{proof}

\begin{lemma}
Let $f=f(x,t)$ be a $\mathbb{D}$-periodic function. Then
\begin{equation}
||f^\varepsilon S_\varepsilon(g)||_{L^p(\mathbb{R}^{d+1})}\leq C ||f||_{L^p(\mathbb{D})}||g||_{L^p(\mathbb{R}^{d+1})},
\end{equation}for any $1\leq p<\infty$, where $f^\varepsilon(x,t)=:f(x/\varepsilon,t/\varepsilon^2)$ and $C$ depends only on $d$ and $p$.
\end{lemma}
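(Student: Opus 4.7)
The plan is to reduce the estimate to a uniform pointwise bound on the mollified density $\tilde\rho_\varepsilon\ast|f^\varepsilon|^p$, and then to exploit the $\mathbb{D}$-periodicity of $f$ in the last $d$ variables to dominate this density by $\|f\|_{L^p(\mathbb{D})}^p$, uniformly in $(y,s)$. First, since $\rho_\varepsilon=\rho_{1,\varepsilon}\otimes\rho_{2,\varepsilon}$ is a probability density on $\mathbb{R}^{d+1}$, Jensen's inequality applied inside the convolution defining $S_\varepsilon$ yields the pointwise domination
\[
|S_\varepsilon(g)(x,t)|^p\leq (\rho_\varepsilon\ast|g|^p)(x,t).
\]
Multiplying by $|f^\varepsilon|^p$, integrating over $\mathbb{R}^{d+1}$, and swapping the order of integration by Fubini converts the target inequality into
\[
\int_{\mathbb{R}^{d+1}}|f^\varepsilon|^p|S_\varepsilon(g)|^p\,dxdt\leq \int_{\mathbb{R}^{d+1}}|g(y,s)|^p\,(\tilde\rho_\varepsilon\ast|f^\varepsilon|^p)(y,s)\,dyds,
\]
with $\tilde\rho_\varepsilon(x,t):=\rho_\varepsilon(-x,-t)$. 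It therefore suffices to establish the uniform pointwise bound $(\tilde\rho_\varepsilon\ast|f^\varepsilon|^p)(y,s)\leq C\|f\|_{L^p(\mathbb{D})}^p$.

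Next, the change of variables $x=\varepsilon u$, $t=\varepsilon^2 v$ rescales the convolution to unit scale:
\[
(\tilde\rho_\varepsilon\ast|f^\varepsilon|^p)(y,s)=\int_{\mathbb{R}^{d+1}}\rho_1(u-y/\varepsilon)\,\rho_2(v-s/\varepsilon^2)\,|f(u,v)|^p\,dudv.
\]
Bounding $\rho_1,\rho_2$ in $L^\infty$ and using that they are supported in $B(0,1/2)\subset\mathbb{R}^d$ and $(-1/2,1/2)\subset\mathbb{R}$ respectively, the integrand vanishes outside the set $\{u_1\in(y_1/\varepsilon-1/2,y_1/\varepsilon+1/2),\ u'\in B(y'/\varepsilon,1/2),\ v\in(s/\varepsilon^2-1/2,s/\varepsilon^2+1/2)\}$. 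Since $|f|^p$ is $1$-periodic in $(u',v)$ and each of the $u'$- and $v$-ranges has diameter at most $1$, the $u'$- and $v$-integrations can be dominated (up to a multiplicative constant) by the integrations over one fundamental period $\mathbb{T}^{d-1}\times\mathbb{T}$. Writing
\[
H(r):=\int_{\mathbb{T}^d}|f(r,u',v)|^p\,du'dv,
\]
this produces the pointwise bound
\[
(\tilde\rho_\varepsilon\ast|f^\varepsilon|^p)(y,s)\leq C\int_{y_1/\varepsilon-1/2}^{y_1/\varepsilon+1/2}H(r)\,dr.
\]

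Finally, positivity of $H$ gives the trivial but crucial estimate
\[
\int_{y_1/\varepsilon-1/2}^{y_1/\varepsilon+1/2}H(r)\,dr\leq\int_{\mathbb{R}}H(r)\,dr=\|f\|_{L^p(\mathbb{D})}^p,
\]
uniformly in $(y,s)$; inserting this into the first inequality and extracting $p$-th roots concludes the proof. The only mild obstacle is the non-periodicity of $f$ in the $y_1$-variable, which precludes replacing the strip integral by a single cell average as in the standard $\mathbb{T}^{d+1}$-periodic case. This is circumvented by using positivity of $H$ together with the fact that $\|f\|_{L^p(\mathbb{D})}$ already encodes the integration over $\mathbb{R}$ in $y_1$, so no more refined argument is needed.
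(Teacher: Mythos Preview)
Your proof is correct and follows essentially the same route as the paper's: Jensen's inequality on the mollifier, Fubini to isolate a local average of $|f|^p$, and then periodicity in $(y',s)$ together with the $L^p(\mathbb{D})$-norm to bound that average uniformly. The only cosmetic difference is that the paper first rescales to $\varepsilon=1$ and then bounds $\sup_{(y,s)}\int_{Q((y,s),1)}|f|^p$, whereas you carry $\varepsilon$ through and rescale inside the convolution; the two are equivalent.
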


\begin{proof}
By a change of variables, it suffices to consider the case $\varepsilon=1$. In this case, we use $\int_{\mathbb{R}^{d}}\rho_1=1$, $\int_{\mathbb{R}}\rho_2=1$ and Holder's inequality to obtain
\begin{equation}
\left|S_{1}(g)(x, t)\right|^{p} \leq \int_{\mathbb{R}^{d+1}}|g(y, s)|^{p} \rho_1(x-y)\rho_2( t-s) d y d s.
\end{equation}
It follows by Fubini's Theorem that
\begin{equation}\begin{aligned}
\int_{\mathbb{R}^{d+1}}|f(x, t)|^{p}\left|S_{1}(g)(x, t)\right|^{p} d x d t & \leq C \sup _{(y, s) \in \mathbb{R}^{d+1}} \int_{Q((y, s), 1)}|f(x, t)|^{p} d x d t \int_{\mathbb{R}^{d+1}}|g(y, s)|^{p} d y d s \\
& \leq C||f||^p_{L^p(\mathbb{D})}\|g\|_{L^{p}(\mathbb{R}^{d+1})}^{p},
\end{aligned}\end{equation}
where $Q((y,s),1)=:B(y,1)\times(s-1,s+1)$ and $C$ depends only on $d$. This gives for the case $\varepsilon=1$. Moreover, from $(3.9)$, we note that if
$f$ is 1-periodic in $(x,t)$, then
\begin{equation}
||f^\varepsilon S_\varepsilon(g)||_{L^p(\mathbb{R}^{d+1})}\leq C ||f||_{L^p(\mathbb{T}^{d+1})}||g||_{L^p(\mathbb{R}^{d+1})}.
\end{equation}
\end{proof}

As a direct consequence of Lemma 3.2, we have the following result:

\begin{cor}
Let $f_1(x,t)$ be a $\mathbb{D}$-periodic function and $f_2(x,t)$ be a 1-periodic function in $(x,t)$, such that $f_1-f_2\in {L^p(\mathbb{D})}$ for some $1\leq p<\infty$. Then
\begin{equation}
||f_1^\varepsilon S_\varepsilon(g)||_{L^p(\mathbb{R}^{d+1})}\leq C ||f_1-f_2||_{L^p(\mathbb{D})}||g||_{L^p(\mathbb{R}^{d+1})}+C||f_2||_{L^p(\mathbb{T}^{d+1})}||g||_{L^p(\mathbb{R}^{d+1})},
\end{equation}
where $C$ depends only on $d$ and $p$.
\end{cor}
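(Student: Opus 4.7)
The plan is entirely built on a decomposition trick combined with the two norm estimates that already appear inside the proof of Lemma 3.2. Write
\[
f_1 = (f_1-f_2) + f_2,
\]
and apply the triangle inequality to $\|f_1^\varepsilon S_\varepsilon(g)\|_{L^p(\mathbb{R}^{d+1})}$ to obtain
\[
\|f_1^\varepsilon S_\varepsilon(g)\|_{L^p(\mathbb{R}^{d+1})}
\le \|(f_1-f_2)^\varepsilon S_\varepsilon(g)\|_{L^p(\mathbb{R}^{d+1})}
+ \|f_2^\varepsilon S_\varepsilon(g)\|_{L^p(\mathbb{R}^{d+1})}.
\]
The two resulting pieces are then handled by two different estimates from Lemma 3.2, chosen according to the periodicity class of the coefficient.

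For the first piece, observe that $f_1-f_2$ is $\mathbb{D}$-periodic (since $f_1$ is $\mathbb{D}$-periodic by hypothesis and any 1-periodic function in $(x,t)$ is in particular $\mathbb{D}$-periodic). The hypothesis $\|f_1-f_2\|_{L^p(\mathbb{D})}\le C$ then lets us invoke the $\mathbb{D}$-periodic estimate (3.7) of Lemma 3.2 with $f$ replaced by $f_1-f_2$, giving the first term on the right-hand side of the claimed inequality. For the second piece, since $f_2$ is 1-periodic in $(x,t)$, we apply estimate (3.10) from the proof of Lemma 3.2, which yields the bound by $\|f_2\|_{L^p(\mathbb{T}^{d+1})}\|g\|_{L^p(\mathbb{R}^{d+1})}$. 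Summing the two contributions gives (3.11). There is no real obstacle here since the work has already been done in Lemma 3.2; the corollary is merely a convenient packaging that separates the \emph{defect} $f_1-f_2$ (measured on the unbounded strip $\mathbb{D}$) from the \emph{reference} periodic profile $f_2$ (measured on the compact torus $\mathbb{T}^{d+1}$), which is exactly the structure that will arise when applying the smoothing operator to the correctors built in Theorem 2.2 and Theorem 2.8.
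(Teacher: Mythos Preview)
Your proof is correct and matches the paper's own argument essentially verbatim: the paper also writes $f_1^\varepsilon=(f_1-f_2)^\varepsilon+f_2^\varepsilon$, applies the triangle inequality, and then invokes the $\mathbb{D}$-periodic estimate (3.9)/(3.7) for the first term and the fully periodic estimate (3.10) for the second.
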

\begin{proof}
According to $(3.9)$-$(3.10)$, it is easy to see that
$$\begin{aligned}
||f_1^\varepsilon S_\varepsilon(g)||_{L^p(\mathbb{R}^{d+1})}&\leq||(f_1 -f_2)^\varepsilon S_\varepsilon(g)||_{L^p(\mathbb{R}^{d+1})}+ ||f_2^\varepsilon S_\varepsilon(g)||_{L^p(\mathbb{R}^{d+1})}\\
&\leq C||f_1-f_2||_{L^p(\mathbb{D})}||g||_{L^p(\mathbb{R}^{d+1})}+
C||f_2||_{L^p(\mathbb{T}^{d+1})}||g||_{L^p(\mathbb{R}^{d+1})},
\end{aligned}$$
thus we complete this proof.
\end{proof}

\begin{rmk}
The same argument as in the proofs of Lemma 3.2 and Corollary 3.3 also shows that
\begin{equation}\begin{aligned}
&\left\|f^{\varepsilon} \nabla S_{\varepsilon}(g)\right\|_{L^{p}\left(\mathbb{R}^{d+1}\right)} \leq C \varepsilon^{-1}\|f\|_{L^{p}(\mathbb{D})}\|g\|_{L^{p}\left(\mathbb{R}^{d+1}\right)}, \\
&\left\|f^{\varepsilon} \partial_{t} S_{\varepsilon}(g)\right\|_{L^{p}\left(\mathbb{R}^{d+1}\right)} \leq C \varepsilon^{-2}\|f\|_{L^{p}(\mathbb{D)}}\|g\|_{{L^{p}}\left(\mathbb{R}^{d+1}\right)},
\end{aligned}\end{equation}
and
\begin{equation}\begin{aligned}
&\left\|f_1^{\varepsilon} \nabla S_{\varepsilon}(g)\right\|_{L^{p}\left(\mathbb{R}^{d+1}\right)} \leq C \varepsilon^{-1}\|f_1-f_2\|_{L^{p}(\mathbb{D})}\|g\|_{L^{p}\left(\mathbb{R}^{d+1}\right)}+C \varepsilon^{-1}\|f_2\|_{L^{p}(\mathbb{T}^{d+1})}\|g\|_{L^{p}\left(\mathbb{R}^{d+1}\right)}, \\
&\left\|f_1^{\varepsilon} \partial_{t} S_{\varepsilon}(g)\right\|_{L^{p}\left(\mathbb{R}^{d+1}\right)} \leq C \varepsilon^{-2}\|f_1-f_2\|_{L^{p}(\mathbb{D})}\|g\|_{L^{p}\left(\mathbb{R}^{d+1}\right)}+C \varepsilon^{-2}\|f_2\|_{L^{p}(\mathbb{T}^{d+1})}\|g\|_{L^{p}\left(\mathbb{R}^{d+1}\right)},
\end{aligned}\end{equation}
for any $1\leq p<\infty$, where $C$ depends only on $d$ and $p$.
\end{rmk}

\subsection{Error estimates in $L^{\frac{2(d+2)}{d}}(\Omega_T)$}
Let $\eta(t)$ be a smoothing cut-off function such that $0\leq \eta\leq 1$, $\eta=1$ if $3T/2\geq t\geq 4\varepsilon^2 $; $\eta=0$ if $t\leq 3\varepsilon^2$ or $t\geq 3T/2+4\varepsilon^2$, with $|\eta'(t)|\leq C\varepsilon^{-2}$.

Moreover, let
\begin{equation}
\begin{aligned}
w_{\varepsilon}(x, t)=u_{\varepsilon}(x, t)-u_{0}(x, t) &-\varepsilon \chi_{j}\left(x / \varepsilon, t / \varepsilon^{2}\right) S_{\varepsilon}\left(\eta\tilde{U}_{0,j}\right) \\
&-\varepsilon^{2} \phi_{(d+1) i j}\left(x / \varepsilon, t / \varepsilon^{2}\right) \frac{\partial}{\partial x_{i}} S_{\varepsilon}\left(\eta\tilde{U}_{0,j}\right).
\end{aligned}
\end{equation}

\begin{thm}Assume $A(y,s)$ satisfies the conditions $(1.2)$-$(1.4)$, and let $u_\varepsilon$ and $u_0$ be the solutions of Equations $(1.1)$ and $(1.7)$, respectively.
Let $w_\varepsilon$ be defined by $(3.14)$ with $\tilde{U}_0(x,t)=:(\nabla P(x))^{-1}\cdot \nabla u_0(x,t)$ for $0<t\leq T$, and $\tilde{U}_0(x,t)=:(\nabla P(x))^{-1}\cdot \nabla u_0(x,2T-t)$ for $T<t< 2T$. Then for any $\varphi\in L^2(0,T;\dot{H}^1(\mathbb{R}^d))$,
\begin{equation}
\begin{aligned}
&\int_{0}^{T}\left\langle\partial_{t} w_{\varepsilon}, \psi\right\rangle_{H^{-1}(\mathbb{R}^d) \times
\dot{H}^1(\mathbb{R}^d)}+\iint_{\Omega_{T}} A^{\varepsilon} \nabla w_{\varepsilon}\nabla \psi \\
=&\iint_{\Omega_{T}}(\widehat{A}_{ik}-A_{ik}^{\varepsilon})\partial_k P_\ell\left(\tilde{U}_{0,\ell}-S_{\varepsilon}(\eta\tilde{U}_{0,\ell})\right) \partial_i
\psi-\varepsilon \iint_{\Omega_{T}} A_{i j}^{\varepsilon}\chi_{k}^{\varepsilon}\partial_{j}
S_{\varepsilon}(\eta\tilde{U}_{0,k})\partial_i \psi \\
&-\varepsilon \iint_{\Omega_{T}} \phi_{k i j}^{\varepsilon}\partial_i
S_{\varepsilon}(\eta\tilde{U}_{0,j})\partial_k \psi
-\varepsilon^{2} \iint_{\Omega_{T}} \phi_{k(d+1) j}^{\varepsilon}\partial_{t}
S_{\varepsilon}(\eta\tilde{U}_{0,j})\partial_k \psi \\
&+\varepsilon \iint_{\Omega_{T}} A_{i j}^{\varepsilon}\left(\partial_j(\phi_{(d+1) \ell
k})\right)^{\varepsilon}\partial_\ell S_{\varepsilon}(\eta\tilde{U}_{0,k})
\partial_i \psi\\
&+\varepsilon^{2} \iint_{\Omega_{T}} A_{i j}^{\varepsilon}\phi_{(d+1) \ell k}^{\varepsilon}
\partial^{2}_{j\ell} S_{\varepsilon}(\eta\tilde{U}_{0,k}) \partial_i \psi.
\end{aligned}
\end{equation}
The repeated indices $i,j,k,\ell$ are summed from $1$ to $d$.
\end{thm}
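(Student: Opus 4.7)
My plan is to compute $\langle\partial_t w_\varepsilon,\psi\rangle + \iint A^\varepsilon\nabla w_\varepsilon\cdot\nabla\psi$ directly from the definition (3.14) and reduce it, term by term, to the six pieces on the RHS of (3.15), by using the PDEs for $u_\varepsilon$ and $u_0$, the corrector equation (2.5), the dual-corrector identity from Theorem 2.8, repeated integrations by parts, and the antisymmetry $\phi_{KIj}=-\phi_{IKj}$. Abbreviating $g_j := S_\varepsilon(\eta\tilde{U}_{0,j})$ and using $\nabla u_0 = \nabla P\cdot \tilde{U}_0$ (by the very definition of $\tilde{U}_0$), subtraction of the weak forms of (1.1) and (1.7) immediately produces the baseline
\[
\langle\partial_t(u_\varepsilon-u_0),\psi\rangle + \iint A^\varepsilon\nabla(u_\varepsilon-u_0)\cdot\nabla\psi = \iint(\widehat{A}_{ik}-A^\varepsilon_{ik})\,\partial_k P_\ell \,\tilde{U}_{0,\ell}\,\partial_i\psi,
\]
which, together with a $(\widehat{A}-A^\varepsilon)\partial P\,g\,\partial\psi$ piece to be extracted below, will recover the first RHS term of (3.15).

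For $v_1 := \varepsilon\chi_j^\varepsilon g_j$, the singular term $\varepsilon^{-1}(\partial_s\chi_j)^\varepsilon g_j$ in $\partial_t v_1$ is processed by rewriting (2.5) in its $\varepsilon$-rescaled divergence form $\varepsilon^{-1}(\partial_s\chi_j)^\varepsilon = \partial_{x_i}\bigl[A^\varepsilon_{ik}\bigl(\partial_k P_j+(\partial_{y_k}\chi_j)^\varepsilon\bigr)\bigr]$ and integrating by parts against $g_j\psi$; this exactly cancels a matching piece of $\iint A^\varepsilon\nabla v_1\cdot\nabla\psi$. I then apply Theorem 2.8 to substitute
\[
A^\varepsilon_{i\ell}\bigl(\partial_\ell P_j+(\partial_{y_\ell}\chi_j)^\varepsilon\bigr)=\widehat{A}_{i\ell}\partial_\ell P_j - \varepsilon\,\partial_m\phi^\varepsilon_{mij} - \varepsilon^2\partial_t\phi^\varepsilon_{(d+1)ij},
\]
having used $(\partial_{y_m}\phi)^\varepsilon=\varepsilon\partial_m\phi^\varepsilon$ and $(\partial_s\phi)^\varepsilon=\varepsilon^2\partial_t\phi^\varepsilon$. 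The $\widehat{A}\partial P\,\partial_i g_j\,\psi$ piece is IBP'd via $\operatorname{div}(\widehat{A}\nabla P_j)=0$ from (2.3) to produce the promised $(\widehat{A}-A^\varepsilon)\partial P\,g\,\partial\psi$ contribution and complete the first RHS term. The $\varepsilon\partial_m\phi^\varepsilon_{mij}$ piece is IBP'd in $x_m$; the contraction $\phi^\varepsilon_{mij}\partial_m\partial_i g_j$ vanishes by antisymmetry, leaving precisely the third RHS term. The $\varepsilon^2\partial_t\phi^\varepsilon_{(d+1)ij}$ residual is saved for cancellation against $\partial_t v_2$ below, while the $\varepsilon A^\varepsilon\chi^\varepsilon\nabla g\cdot\nabla\psi$ piece from $\nabla v_1$ is the second RHS term verbatim.

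For $v_2 := \varepsilon^2\phi^\varepsilon_{(d+1)ij}\partial_i g_j$, the flux $-\iint A^\varepsilon\nabla v_2\cdot\nabla\psi$ produces the fifth and sixth RHS terms directly, while $-\langle\partial_t v_2,\psi\rangle$ splits by product rule into $-\varepsilon^2\iint\partial_t\phi^\varepsilon_{(d+1)ij}\,\partial_i g_j\,\psi - \varepsilon^2\iint\phi^\varepsilon_{(d+1)ij}\,\partial_i\partial_t g_j\,\psi$; the first of these annihilates the saved residual from the $v_1$ step. The leftover $-\iint\varepsilon\chi_j^\varepsilon\partial_t g_j\,\psi$ (coming from the explicit $\varepsilon\chi^\varepsilon\partial_t g$ factor of $\partial_t v_1$) is reshaped via the Theorem 2.8 identity $B_{(d+1)j}=-\chi_j=\partial_m\phi_{m(d+1)j}$ (here $\phi_{(d+1)(d+1)j}=0$ by antisymmetry): substituting $\chi_j^\varepsilon=-\varepsilon\partial_m\phi^\varepsilon_{m(d+1)j}$, integrating by parts in $x_m$, and using $\phi_{m(d+1)j}=-\phi_{(d+1)mj}$, yields the fourth RHS term plus a cross-derivative $\phi^\varepsilon_{(d+1)ij}\partial_i\partial_t g_j\,\psi$ piece that precisely cancels the remaining $v_2$ time-derivative contribution.

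The main obstacle is the careful bookkeeping of signs, $\varepsilon$-powers, and index renamings across the layered product-rule expansions, IBPs, and the asymmetric rescalings $(\partial_{y_m}\phi)^\varepsilon=\varepsilon\partial_m\phi^\varepsilon$ versus $(\partial_s\phi)^\varepsilon=\varepsilon^2\partial_t\phi^\varepsilon$. The antisymmetry $\phi_{KIj}=-\phi_{IKj}$ furnished by Theorem 2.8 is the structural key: it annihilates the mixed-partial $\phi\,\partial\partial g$ contractions that would otherwise persist, and it underlies the identity $-\chi_j=\partial_m\phi_{m(d+1)j}$ that is used to isolate the fourth RHS term. The distributional issues at the interface $\{x_1=0\}$ are handled by the transmission condition (2.4) for $\nabla P_j$ and the construction of $\phi_{KIj}$ in Theorem 2.8, so that all the IBPs are free of any spurious surface contributions.
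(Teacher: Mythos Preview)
Your proposal is correct and follows essentially the same route as the paper: the corrector equation (2.5), the dual-corrector identity $B_{Ij}=\partial_{K}\phi_{KIj}$ from Theorem~2.8, and the antisymmetry $\phi_{KIj}=-\phi_{IKj}$ are exactly the three structural ingredients the paper uses, and your bookkeeping of the cancellations (the vanishing of $\phi_{mij}^\varepsilon\partial_m\partial_i g_j$, the pairing of the $\varepsilon^2\partial_t\phi_{(d+1)ij}^\varepsilon$ residual with $\partial_t v_2$, and the use of $-\chi_j=\partial_m\phi_{m(d+1)j}$) matches the paper's computation.

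The only organizational difference is that the paper computes $(\partial_t+\mathcal L_\varepsilon)w_\varepsilon$ in the strong (distributional) form first, packaging the flux mismatch through the auxiliary quantity $B_{ij}^\varepsilon$ and the identity $\sum_{I}\partial_I B_{Ij}=0$ (equivalent to the corrector equation), and then reads off the weak identity at the end; you instead work in the weak form from the outset, apply the rescaled corrector equation directly to the singular $\varepsilon^{-1}(\partial_s\chi_j)^\varepsilon$ term, and isolate the $\widehat A\,\partial P$ contribution by an explicit integration by parts using $\operatorname{div}(\widehat A\nabla P_j)=0$ from~(2.3). These are equivalent rearrangements of the same calculation; neither gains or loses anything substantive.
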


\begin{proof}
The similar computation have been performed in \cite[Theorem 2.2]{MR3596717}, and we provide it for completeness. Note that $\tilde{U}_0(x,t)=:(\nabla P(x))^{-1}\cdot \nabla u_0(x,t)$ for $0<t\leq T$, then a direct computation shows that
\begin{equation}
\begin{aligned}
(\partial_{t}+\mathcal{L}_{\varepsilon})
w_{\varepsilon}=&(\mathcal{L}_{0}-\mathcal{L}_{\varepsilon})
u_{0}-(\partial_{t}+\mathcal{L}_{\varepsilon})\left\{\varepsilon \chi_{j}^{\varepsilon}
S_{\varepsilon}(\eta\tilde{U}_{0,j})\right\} -\left(\partial_{t}+\mathcal{L}_{\varepsilon}\right)\left\{\varepsilon^{2} \phi_{(d+1) i j}^{\varepsilon}
\partial_{i} S_{\varepsilon}(\eta\tilde{U}_{0,j})\right\} \\
=&-\partial_{i}\left\{(\widehat{A}_{ik}-A_{ik}^{\varepsilon})\partial_k
P_\ell\left(\tilde{U}_{0,\ell}-S_{\varepsilon}(\eta\tilde{U}_{0,\ell})\right)\right\} \\
&-\partial_{i}\left\{(\widehat{A}_{ik}-A_{ik}^{\varepsilon})\partial_k P_\ell \cdot S_{\varepsilon}(\eta\tilde{U}_{0,\ell})\right\}-\left(\partial_{t}+\mathcal{L}_{\varepsilon}\right)\left\{\varepsilon
\chi_{j}^{\varepsilon} S_{\varepsilon}(\eta\tilde{U}_{0,j})\right\} \\
&-\left(\partial_{t}+\mathcal{L}_{\varepsilon}\right)\left\{\varepsilon^{2} \phi_{(d+1) i j}^{\varepsilon}
\partial_{i} S_{\varepsilon}(\eta\tilde{U}_{0,j})\right\}.
\end{aligned}
\end{equation}
A direct computation after using the definition of $B$ in Theorem 2.8 yields that
\begin{equation}
\begin{aligned}
&-\partial_{i}\left\{(\widehat{A}_{ik}-A_{ik}^{\varepsilon})\partial_k P_\ell \cdot S_{\varepsilon}(\tilde{U}_{0,\ell})\right\}-\left(\partial_{t}+\mathcal{L}_{\varepsilon}\right)\left\{\varepsilon
\chi_{j}^{\varepsilon} S_{\varepsilon}(\eta\tilde{U}_{0,j})\right\}\\
=&\partial_{i}\left\{B_{i j}^{\varepsilon}S_{\varepsilon}(\eta\tilde{U}_{0,j})\right\}+\varepsilon \partial_{i}\left\{A_{i j}^{\varepsilon}\chi_{k}^{\varepsilon}\partial_{j} S_{\varepsilon}(\eta\tilde{U}_{0,k})\right\}
-\varepsilon \partial_{t}\left\{\chi_{j}^{\varepsilon} S_{\varepsilon}(\eta\tilde{U}_{0,j})\right\}\\
=&\varepsilon \partial_{i}\left\{A_{i j}^{\varepsilon}\chi_{k}^{\varepsilon}\partial_{j} S_{\varepsilon}(\eta\tilde{U}_{0,k})\right\}+B_{i j}^{\varepsilon}\partial_{i} S_{\varepsilon}(\eta\tilde{U}_{0,j})
-\varepsilon \chi_{j}^{\varepsilon} \partial_tS_{\varepsilon}(\eta\tilde{U}_{0,j}),
\end{aligned}
\end{equation}
where, in the inequality above, we have used $$\sum_{I=1}^{d+1}\partial_I B_{Ij}=\sum_{i=1}^{d}\partial_i B_{ij}-\partial_s \chi_j=0 \quad \text{in }\mathbb{D}$$ due to $(2.5)$.
According to $(2.33)$, we have
\begin{equation*}
\begin{aligned}
&B_{i j}^{\varepsilon}\partial_{i} S_{\varepsilon}(\eta\tilde{U}_{0,j})-\varepsilon \chi_{j}^{\varepsilon} \partial_tS_{\varepsilon}(\eta\tilde{U}_{0,j})\\
=&\varepsilon \partial_{k}\phi_{k i j}^{\varepsilon}\cdot \partial_{i} S_{\varepsilon}(\eta\tilde{U}_{0,j}) +\varepsilon^{2} \partial_{t}\phi_{(d+1) i j}^{\varepsilon} \cdot \partial_{i} S_{\varepsilon}(\eta\tilde{U}_{0,j})\\
&+\varepsilon^{2} \partial_k \phi_{k(d+1)  j}^{\varepsilon}\partial_tS_{\varepsilon}(\eta\tilde{U}_{0,j}),
\end{aligned}
\end{equation*}
where we have used the skew-symmetry of $\phi_{(d+1)(d+1)j}$. Moreover, by the skew-symmetry of $\phi$, we have
\begin{equation}
\begin{aligned}
&B_{i j}^{\varepsilon}\partial_{i} S_{\varepsilon}(\eta\tilde{U}_{0,j})-\varepsilon \chi_{j}^{\varepsilon} \partial_tS_{\varepsilon}(\eta\tilde{U}_{0,j})\\
=&\varepsilon \partial_{k}\left(\phi_{k i j}^{\varepsilon}\cdot \partial_{i} S_{\varepsilon}(\eta\tilde{U}_{0,j})\right) +\varepsilon^{2} \partial_{t}\left(\phi_{(d+1) i j}^{\varepsilon} \cdot \partial_{i} S_{\varepsilon}(\eta\tilde{U}_{0,j})\right)\\
&+\varepsilon^{2} \partial_k \left(\phi_{k(d+1)  j}^{\varepsilon}\partial_tS_{\varepsilon}(\eta\tilde{U}_{0,j})\right),
\end{aligned}
\end{equation}
which, combined with $(3.16)$-$(3.17)$, yields the desired equality $(3.15)$.
\end{proof}

Now, we are ready to give the proof of Theorem 1.2.\\

\noindent \textbf{Proof of Theorem 1.2}.
First, due to $(2.9)$, $(2.32)$, Theorem 2.9 and Remark 3.4, we know that $w_\varepsilon\in L^2(0,T;\dot{H}^1(\mathbb{R})^d)$. Let $\psi=w_\varepsilon$ in $(3.15)$, then
 Theorem 3.5 yields that the l.h.s. of $(3.15)$ is bounded by
\begin{equation}
\begin{aligned}
&C \iint_{\Omega_{T}}\left|\tilde{U}_{0}-S_{\varepsilon}(\eta\tilde{U}_{0})\right||\nabla w_\varepsilon| \\
&\quad +C \varepsilon \iint_{\Omega_{T}}\left\{\left|\chi^{\varepsilon}\right|+\left|\phi^{\varepsilon}\right|+\left|(\nabla \phi)^{\varepsilon}\right|\right\}\left|\nabla S_{\varepsilon}(\eta\tilde{U}_{0})\right||\nabla w_\varepsilon| \\
&\quad +C \varepsilon^{2} \iint_{\Omega_{T}}\left|\phi^{\varepsilon}\right|\left\{\left|\partial_{t} S_{\varepsilon}(\eta\tilde{U}_{0})\right|+\left|\nabla^{2} S_{\varepsilon}(\eta\tilde{U}_{0})\right|\right\}|\nabla w_\varepsilon| \\
&=:M_{1}+M_{2}+M_{3},
\end{aligned}
\end{equation}
where $C$ depends only on $d$ and $\kappa$.

 To estimate $M_1$, after noting the choice of $\eta$, we have
\begin{equation}
\begin{aligned}
M_{1} \leq & C \iint_{\Omega_{5\varepsilon^2}}\left\{|\tilde{U}_{0}|+S_{\varepsilon}\left(\eta |\tilde{U}_{0}|\right)\right\}|\nabla w_\varepsilon|+C \iint_{\Omega_{T} \backslash \Omega_{5 \varepsilon^2}}\left|\tilde{U}_{0}-S_{\varepsilon}(\tilde{U}_{0})\right||\nabla w_\varepsilon| \\
\leq &C\left(\iint_{\Omega_{ 6\varepsilon^2}}|\tilde{U}_{0}|^{2}\right)^{1 / 2}
||\nabla w_\varepsilon||_{L^2(\Omega_T)}+C\left\|\tilde{U}_{0}-S_{\varepsilon}(\tilde{U}_{0})\right\|_{L^2(\Omega_T \backslash \Omega_{5 \varepsilon^2})}||\nabla w_\varepsilon||_{L^2(\Omega_T)}\\
\leq & C\left(\iint_{\Omega_{ 6\varepsilon^2}}|\nabla {u}_{0}|^{2}\right)^{1 / 2}
||\nabla w_\varepsilon||_{L^2(\Omega_T)}+C\left\|\tilde{U}_{0}-S_{\varepsilon}(\tilde{U}_{0})\right\|_{L^2(\Omega_T \backslash \Omega_{5 \varepsilon^2})}||\nabla w_\varepsilon||_{L^2(\Omega_T)}.
\end{aligned}
\end{equation}

To estimate the second term in the r.h.s. of $(3.20)$,
we let
\begin{equation}U_1(z,t)=\left\{\begin{aligned}
&\tilde{U}_0(z,t),&\quad &\text{ for }4\varepsilon^2\leq t\leq 2T-8\varepsilon^2,\\
&\tilde{U}_0(z,-t+8\varepsilon^2),&\quad &\text{ for }-T\leq t\leq 4\varepsilon^2,
\end{aligned}\right.
\end{equation}
and choose a smooth cut-off function $\eta_1(t)\in[0,1]$ satisfying $\eta_1=1$ if $-T/3\leq t \leq 4T/3$, and $\eta_1=0$ if $ t \leq -T/2$ or $t\geq 3T/2$ with $|\eta_1'|\leq CT^{-1}$.
Then, it is easy to check that $\eta_1 U_1=\tilde{U}_0$ for $4\varepsilon^2\leq t\leq 4T/3$. According to $(3.4)$-$(3.6)$, we have
\begin{equation}\begin{aligned}
&\left\|\tilde{U}_{0}-S_{\varepsilon}(\tilde{U}_{0})\right\|_{L^2(\Omega_T \backslash \Omega_{5 \varepsilon^2})}=\left\|\eta_1{U}_{1}-S_{\varepsilon}(\eta_1{U}_{1})\right\|_{L^2(\Omega_T \backslash \Omega_{5 \varepsilon^2})}\\
\leq& \left\|{U}_{1}-S^x_{\varepsilon}({U}_{1})\right\|_{L^2(\Omega_T \backslash \Omega_{5 \varepsilon^2})}+\left\|S_{\varepsilon}(\eta_1{U}_{1})-S^x_{\varepsilon}(\eta_1{U}_{1})\right\|_{L^2(\Omega_T \backslash \Omega_{5 \varepsilon^2})}\\[5pt]
\leq& C \varepsilon ||\nabla {U}_{1}||_{_{L^2(\Omega_T \backslash \Omega_{5 \varepsilon^2})}}+\left\|S_\varepsilon^x(\eta_1{U}_{1})-S_{\varepsilon}(\eta_1{U}_{1})\right\|_{L^2(\mathbb{R}^{d+1})}\\
\leq &C \varepsilon ||\nabla {U}_{1}||_{{L^2(\Omega_T \backslash \Omega_{ 5\varepsilon^2})}}+C\varepsilon^2||\eta_1\partial_t {U}_{1}||_{L^2(\mathbb{R}^{d+1})}+C\varepsilon^2||{U}_{1}\partial_t\eta_1||_{L^2(\mathbb{R}^{d+1})}\\[5pt]
\leq& C \varepsilon ||\nabla \tilde{U}_{0}||_{L^2(\mathbb{R}^d\times (4\varepsilon^2,T))}+C\varepsilon^2||\partial_t \tilde{U}_{0}||_{L^2(\mathbb{R}^d\times (4\varepsilon^2,T))}+C\varepsilon^2||\tilde{U}_{0}||_{L^2(\mathbb{R}^d\times (4\varepsilon^2,T))}\\[5pt]
\leq& C \varepsilon||f||_{L^2(\Omega_T)\cap L^\infty(0,T;L^2(\mathbb{R}^d))}+C \varepsilon^2||f_t||_{L^2(\Omega_T)},
\end{aligned}\end{equation}
where we have used the estimates $(2.39)$-$(2.40)$ in the last inequality due to $\tilde{U}_0(x,t)=:(\nabla P(x))^{-1}\cdot \nabla u_0(x,t)$ for $0<t\leq T$, and $\tilde{U}_0(x,t)=:(\nabla P(x))^{-1}\cdot \nabla u_0(x,2T-t)$ for $T<t\leq 2T$, together with the definition $(3.21)$. Therefore, combining $(2.45)$, $(3.20)$ and $(3.22)$ yields that
\begin{equation}
M_1\leq C \left(\varepsilon||f||_{L^2(\Omega_T)\cap L^\infty(0,T;L^2(\mathbb{R}^d))}+C \varepsilon^2||f_t||_{L^2(\Omega_T)}\right)||\nabla w_\varepsilon||_{L^2(\Omega_T)}.
\end{equation}

Denote $M_2=:M_{21}+M_{22}+M_{23}$. Noting that $\chi\in L^\infty(\mathbb{D})$ due to $(2.16)$, $(2.18)$, $(2.19)$ and $(2.22)$, we have
\begin{equation}\begin{aligned}
M_{21}:=&C\varepsilon \iint_{\Omega_{T}}|\chi^{\varepsilon}|\left|\nabla S_{\varepsilon}(\eta\tilde{U}_{0})\right||\nabla w_\varepsilon|\\
\leq& C\varepsilon ||\nabla w_\varepsilon||_{L^2(\Omega_T)}||\nabla \tilde{U}_{0}||_{L^2(\Omega_T )}\\[5pt]
\leq& C \varepsilon ||\nabla w_\varepsilon||_{L^2(\Omega_T)}||f||_{L^2(\Omega_T )},
\end{aligned}\end{equation}
where we have used $(2.39)$ in the last inequality since $\nabla_z \tilde{u}_0(z,t)=\tilde{U}_0(P^{-1}(z),t)$ (note that $\eta\tilde{U}_{0}$ is indeed defined in the whole space $\mathbb{R}^{d+1}$).

According to $(2.32)$ and $(3.11)$, we have
\begin{equation}\begin{aligned}
M_{22}:=&C\varepsilon \iint_{\Omega_{T}}|\phi^{\varepsilon}|\left|\nabla S_{\varepsilon}(\eta\tilde{U}_{0})\right||\nabla w_\varepsilon|\\
\leq &C\varepsilon||\nabla \tilde{U}_{0}||_{L^2(\Omega_T)}||\nabla w_{\varepsilon}||_{L^2(\Omega_T)}\\[5pt]
\leq &C \varepsilon||f||_{L^2(\Omega_T)}||\nabla w_{\varepsilon}||_{L^2(\Omega_T)}.\\[5pt]
\end{aligned}\end{equation}

Similarly, according to $(2.32)$ and $(3.11)$ again, we have
\begin{equation}\begin{aligned}
M_{23}=: &C \varepsilon \iint_{\Omega_{T}}\left|(\nabla \phi)^{\varepsilon}\right|\left|\nabla S_{\varepsilon}(\eta\tilde{U}_{0})\right||\nabla w_\varepsilon|\\
\leq &C\varepsilon||\nabla \tilde{U}_{0}||_{L^2(\Omega_T)}||\nabla w_{\varepsilon}||_{L^2(\Omega_T)}\\[5pt]
\leq &C \varepsilon||f||_{L^2(\Omega_T)}||\nabla w_{\varepsilon}||_{L^2(\Omega_T)}.\\[5pt]
\end{aligned}\end{equation}

Thus, combining $(3.24)$-$(3.26)$ yields that
\begin{equation}
M_2\leq C\varepsilon ||f||_{L^2 (\Omega_T)}||\nabla w_{\varepsilon}||_{L^2(\Omega_T)}.
\end{equation}

To proceed, denote $M_3=:M_{31}+M_{32}$. Using  $(2.32)$ and $(3.11)$ yields that
\begin{equation}\begin{aligned}
M_{31}&=:C \varepsilon^{2} \iint_{\Omega_{T}}\left|\phi^{\varepsilon}\right|\left|\partial_{t} S_{\varepsilon}(\eta\tilde{U}_{0})\right||\nabla w_\varepsilon|\\
&\leq C \varepsilon^{2}||\nabla w_{\varepsilon}||_{L^2(\Omega_T)}\left(||\eta \partial_t \tilde{U}_{0}||_{L^2(\Omega_T)}+ ||\partial_t\eta \tilde{U}_{0}||_{L^2(\Omega_T)}\right)\\[5pt]
&\leq C \left(\varepsilon||f||_{L^2(\Omega_T)\cap L^\infty(0,T;L^2(\mathbb{R}^d))}+C \varepsilon^2||f_t||_{L^2(\Omega_T)}\right)||\nabla w_\varepsilon||_{L^2(\Omega_T)},
\end{aligned}\end{equation}
where we have also used $(2.40)$ and $(2.45)$ in the last inequality. Similarly, for $M_{32}$, according to $(2.32)$ and $(3.13)$, we have
\begin{equation}\begin{aligned}
M_{32}&=:C \varepsilon^{2} \iint_{\Omega_{T}}\left|\phi^{\varepsilon}\right|\left|\nabla^{2} S_{\varepsilon}(\eta\tilde{U}_{0})\right||\nabla w_\varepsilon|\\
&\leq C\varepsilon ||\nabla \tilde{U}_{0}||_{L^2(\Omega_T)}||\nabla w_{\varepsilon}||_{L^2(\Omega_T)}\\[5pt]
&\leq C \varepsilon||f||_{L^2(\Omega_T)}||\nabla w_{\varepsilon}||_{L^2(\Omega_T)}.
\end{aligned}\end{equation}
Thus, combining $(3.23)$, $(3.27)$-$(3.29)$ yields that
\begin{equation*}\begin{aligned}
||w_\varepsilon||_{L^\infty(0,T;L^2(\mathbb{R}^d))}+||\nabla w_\varepsilon||_{L^2(\Omega_T)}
\leq C\varepsilon ||f||_{L^2(\Omega_T)\cap L^\infty(0,T;L^2(\mathbb{R}^d))}+C\varepsilon^2||f_t||_{L^2(\Omega_T)}.
\end{aligned}\end{equation*}
\noindent
By embedding inequality, we have
\begin{equation*}\begin{aligned}
||w_\varepsilon||_{L^{\frac{2(d+2)}{d}}(\Omega_T)}
\leq C\varepsilon ||f||_{L^2(\Omega_T)\cap L^\infty (0,T;L^2(\mathbb{R}^d))}+C\varepsilon^2||f_t||_{L^2(\Omega_T)}.
\end{aligned}\end{equation*}
\noindent
On the other hand, due to embedding inequality \cite[Chapter 2, Theorem 2.3]{ChenYazhe}, $(2.39)$ and $\nabla_z \tilde{u}_0(z,t)=\tilde{U}_0(P^{-1}(z),t)$ for $0<t \leq T$ as well as $\nabla_z \tilde{u}_0(z,t)=\tilde{U}_0(P^{-1}(z),2T-t)$ for $T<t <2 T$, we have
$$\begin{aligned}
&\left\|\varepsilon S_{\varepsilon}(\eta\tilde{U}_{0,j})-\varepsilon^{2}\partial_{i} S_{\varepsilon}(\eta\tilde{U}_{0,j})\right\|_{L^{\frac{2(d+2)}{d}}(\Omega_T)}\\
\leq& C \varepsilon ||\eta\tilde{U}||_{ L^{\frac{2(d+2)}{d}}(\Omega_T)}
\leq  C \varepsilon ||\nabla\tilde{u}_0||_{ L^{\frac{2(d+2)}{d}}(\Omega_T)}\\[5pt]
\leq& C \varepsilon \|\tilde{u}_0\|_{W^{2,1}_2(\Omega_T)}\leq C\varepsilon \|f\|_{L^2(\Omega_T)}.
\end{aligned}$$
In conclusion, we have obtained
\begin{equation}\begin{aligned}
||u_\varepsilon-u_0||_{L^{\frac{2(d+2)}{d}}(\Omega_T)}\leq C\varepsilon ||f||_{L^2(\Omega_T)\cap L^\infty (0,T;L^2(\mathbb{R}^d))}+C\varepsilon^2||f_t||_{L^2(\Omega_T)}.
\end{aligned}\end{equation}
Thus we complete the proof of Theorem 1.2.
\qed

\begin{rmk}
A direct computation shows that
$$\begin{aligned}
S_\varepsilon^x(\partial_t\tilde{U}_{0})(x,t)=&\int_{\mathbb{R}^d}\rho_{1,\varepsilon}(x-y)\partial_t\tilde{U}_{0}(y,t)dy\\
=&\int_{\mathbb{R}^d}\rho_{1,\varepsilon}(x-P^{-1}(z))\partial_t\tilde{U}_{0}(P^{-1}(z),t)\left|\nabla P(P^{-1}(z))\right|^{-1}dz\\
=&\int_{\mathbb{R}^d}\rho_{1,\varepsilon}(x-P^{-1}(z))\nabla_z\partial_t\tilde{u}_{0}(z,t)\left|\nabla P(P^{-1}(z))\right|^{-1}dz.
\end{aligned}$$

Note that $\left|\nabla P(P^{-1}(z))\right|^{-1}$ is piecewise constant across the interface $\mathcal{I}=:\left\{z:z_1=0\right\}$, then integrating by parts yields that
\begin{equation}\begin{aligned}
\left|S_\varepsilon^x(\partial_t\tilde{U}_{0})(x,t)\right|\leq C& \varepsilon^{-1} \int_{\mathbb{R}^d}\left|\nabla_z \rho_{1,\varepsilon}(x-P^{-1}(z))\right|\cdot |\partial_t\tilde{u}_{0}|dz\\
&\quad +C\int_{z_1=0}\rho_{1,\varepsilon}(x-P^{-1}(z))|\partial_t\tilde{u}_{0}|dz'.
\end{aligned}\end{equation}

In order to estimate the second term in the r.h.s. of $(3.31)$, one needs more regularity assumption on $\partial_t\tilde{u}_{0}$ due to Trace Theorem. Consequently, following the ideas in $(3.4)$-$(3.6)$, we may not have
\begin{equation*}\left\|\tilde{U}_{0}-S_{\varepsilon}(\tilde{U}_{0})\right\|_{L^2(\Omega_T \backslash \Omega_{5\varepsilon^2})}\leq C \varepsilon \left(||\nabla^2 \tilde{u}_0||_{L^2(\Omega_T)}+||\partial_t \tilde{u}_0||_{L^2(\Omega_T)}\right),\end{equation*}
and
\begin{equation*}
\varepsilon\left\|\partial_t S_{\varepsilon}(\tilde{U}_{0})\right\|_{L^2(\Omega_T \backslash \Omega_{ 5\varepsilon^2})}\leq C ||\partial_t \tilde{u}_0||_{L^2(\Omega_T)},
\end{equation*}
even if we have $\partial_{z_j}\tilde{u}_0=\tilde{U}_{0,j}(P^{-1}(z),t)$.

However, these regularity assumptions on the source term $f$ may be technical and not optimal. And it is an interesting problem to release these regularity assumptions on $f$, which is left for further with interest.
\end{rmk}

\section{Interior Lipschitz estimates}
In homogenization theory without an interface, we take advantage of the uniform H-convergence of the multi-scale problem $\partial_tu_\varepsilon+\mathcal{L}_\varepsilon u_\varepsilon=f$ in $Q$ to the homogeneous effective problem $\partial_t u_0+\mathcal{L}_0 u_0=f$ in $Q$, which states that the multi-scale solution $u_\varepsilon$  inherits the medium-scale regularity of the solution $u_0$, where $u_0$ satisfies  $\partial_t u_0-\operatorname{div}(A^0\nabla u_0)=f$ in $Q$ with $A^0$ constant matrix satisfying the ellipticity condition $(1.3)$.

Now, back to our case, in order to apply the compactness argument, we need first to find a better regularity estimate for $u_0$, which is stated in the following theorem:

\begin{thm}Let $\widehat{A}$ be a matrix function satisfying $(1.8)$. Let $(x_0,t_0)\in \mathbb{R}^{d+1}$, and assume that $u_0\in L^2(t_0-1,t_0;H^1(B(x_0,1)))$ is a weak solution to the following equation
\begin{equation}
\partial_t u_0(x,t)-\operatorname{div}(\widehat{A}(x)\nabla u_0(x,t))=0\quad \text{in}\quad Q_1(x_0,t_0).\end{equation}

Then, there exists a constant $C$ depending only on $d$ and $\kappa$ such that, for any $\theta\in (0,1/4)$, there holds
\begin{equation}\begin{aligned}
\sup_{(x,t)\in Q_\theta(x_0,t_0)}&\left|u_0(x,t)-u_0(x_0,t_0)-(P(x)-P(x_0))\fint_{Q_\theta(x_0,t_0)}(\nabla P)^{-1}\cdot \nabla u_0\right|\\
&\leq C \theta^{2-}\left(\fint_{Q_\theta(x_0,t_0)}|u_0|^2\right)^{1/2},
\end{aligned}\end{equation}
where the notation $\theta^{2-}$ means that $\theta^{2-}=:C_\delta \theta^{2-\delta}$ for any $\delta\in (0,1)$ with $C_\delta\rightarrow +\infty$ as $\delta\rightarrow 0$.
\end{thm}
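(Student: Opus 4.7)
The plan is to reduce (4.2) to a Campanato-type decay estimate for the transformed function $\tilde{u}_0(z,t):=u_0(P^{-1}(z),t)$ introduced before Theorem 2.9. Exactly as in (2.41)--(2.42), and using that $|J(z)|^{-1}\widetilde{A}(z)$ is divergence-free on $\mathbb{R}^d$, the equation $\partial_t u_0-\operatorname{div}(\widehat{A}\nabla u_0)=0$ rewrites as the non-divergence form parabolic equation
$$
\partial_t \tilde{u}_0(z,t)-\widetilde{A}_{ij}(z)\,\partial_{ij}\tilde{u}_0(z,t)=0,
$$
where $\widetilde{A}$ is elliptic and piecewise constant across the flat interface $\{z_1=0\}$. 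The crucial structural input, already recorded before Theorem 2.9, is that the map $P$ is built precisely to enforce the transmission conditions (2.4), so that both $\tilde{u}_0$ and its full gradient $\nabla_z\tilde{u}_0(z,t)=(\nabla P(P^{-1}(z)))^{-1}\nabla u_0(P^{-1}(z),t)$ are continuous across $\{z_1=0\}$.

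With this reduction, the target becomes the following Campanato decay for $\tilde{u}_0$: setting $z_0:=P(x_0)$, for every $\theta\in(0,1/4)$,
$$
\sup_{Q_\theta(z_0,t_0)}\bigl|\tilde{u}_0(z,t)-\tilde{u}_0(z_0,t_0)-(z-z_0)\cdot V_\theta\bigr|\le C\theta^{2-}\Bigl(\fint_{Q_1(z_0,t_0)}|\tilde{u}_0|^2\Bigr)^{1/2},
$$
where $V_\theta:=\fint_{Q_\theta(z_0,t_0)}\nabla_z\tilde{u}_0$. I would obtain this by a blow-up/compactness argument modelled on \cite[Theorem 5.1]{MR3361284} and \cite[Theorem 4.1]{MR3974127}: if the decay failed, rescaling produces a sequence of $L^2$-bounded solutions of the transformed equation violating the bound, and a subsequential limit $v$ is a bounded global solution of $\partial_t v-\widetilde{A}_{ij}\partial_{ij}v=0$ with the same interface matching. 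Since $\widetilde{A}_\pm$ are constant, interior Schauder theory yields smoothness of $v$ up to $\{z_1=0\}$ from each side, and the continuity of $v$ and $\nabla_z v$ across the interface forces $v$ to be affine in $z$; this contradicts the rescaling normalization and yields the rate $\theta^{1+\alpha}$ for every $\alpha<1$, which is what $\theta^{2-}$ encodes.

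Finally I would transfer the estimate back to the $x$-variable. Under $z=P(x)$ one has $\tilde{u}_0(z,t)=u_0(x,t)$, $z-z_0=P(x)-P(x_0)$, and $\nabla_z\tilde{u}_0=(\nabla P)^{-1}\nabla u_0$; since $\nabla P$ is piecewise constant with eigenvalues bounded above and below, $P(Q_\theta(x_0,t_0))$ is comparable to $Q_\theta(z_0,t_0)$ in shape and volume, so the defining average of $V_\theta$ and the $L^2$ norms on both sides transfer up to harmless multiplicative constants, producing (4.2). The main obstacle is the compactness step itself: getting an almost-quadratic approximation rate, rather than the $C^{1,\alpha}$-type bound $\theta^{1+\alpha}$ for a single $\alpha$, essentially relies on the continuity of $\nabla_z\tilde{u}_0$ across $\{z_1=0\}$ built into $P$, since otherwise the discontinuity of the gradient at the interface would limit the decay to one fixed exponent tied to the ellipticity contrast, rather than to every $\alpha<1$.
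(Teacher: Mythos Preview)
Your reduction to the non-divergence form equation for $\tilde u_0$ and your observation that the choice of $P$ makes $\nabla_z\tilde u_0$ continuous across $\{z_1=0\}$ are both correct and are exactly the structural facts the paper exploits. The gap is in the step you call ``blow-up/compactness.'' Theorem~4.1 is a statement about the \emph{fixed} effective operator; there is no small parameter being sent to zero, so the subsequential limit you produce solves the very same equation you started from, and nothing has been simplified. Your contradiction hinges on the assertion that a bounded global (ancient) solution of $\partial_t v-\tilde A_{ij}\partial_{ij}v=0$ with the interface matching must be affine. That is a parabolic Liouville theorem for a transmission problem, and it is neither standard nor supplied by ``interior Schauder theory \dots\ up to $\{z_1=0\}$ from each side'': boundary Schauder would require regular boundary data on $\{z_1=0\}$, which you do not yet have. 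In short, the compactness scheme from \cite[Theorem~5.1]{MR3361284} and \cite[Theorem~4.1]{MR3974127} is designed to pass from the oscillating operator to the effective one; invoking it for the effective operator itself is circular.

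The paper proceeds by a direct regularity argument instead. First, it uses the composite-material gradient estimate of Li--Li \cite[Theorem~1.4]{MR3714566} (together with De~Giorgi--Nash--Moser) to obtain $|\nabla_{x,t}u_0|\le C\|u_0\|_{L^2(Q_1)}$ on any interior cylinder, so that after the change of variables $\nabla_z\tilde u_0\in L^\infty_{\mathrm{loc}}$. Then, since $\tilde A(z)$ is piecewise constant in $z_1$ and hence falls under the Kim--Krylov $W^{2,1}_p$ theory for non-divergence parabolic equations with coefficients measurable in one direction \cite[Theorem~2.2]{MR2300337}, a localization gives $\|\tilde u_0\|_{W^{2,1}_{p,\mathrm{loc}}}\le C\|\tilde u_0\|_{L^2}$ for every $p<\infty$. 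Parabolic Sobolev embedding then yields $[\nabla_z\tilde u_0]_{C^{\alpha,\alpha/2}}+\|\partial_t\tilde u_0\|_{L^\infty}\le C\|\tilde u_0\|_{L^2}$ for every $\alpha<1$, and (4.2) follows from a one-line Taylor estimate for $\tilde u_0$ together with the identity $\nabla_z\tilde u_0(P(x),t)=(\nabla P(x))^{-1}\nabla u_0(x,t)$. The ingredient you are missing is precisely this $W^{2,1}_p$ estimate for the transformed equation, which replaces your unproved Liouville step.
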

The above formula is a generalization of the initialization step in \cite[Theorem 5.1]{MR3361284} and plays an important role in obtaining the interior Lipschitz estimates in Theorem 1.4.
\begin{proof}
First, due to De Giorgi Nash-Moser estimates \cite{MR100158}, $u_0\in C_{\text{loc}}^{\alpha,\alpha/2}(Q_1(x_0,t_0))$ for some $\alpha>0$. Since $\widehat{A}(x)$ is piecewise constant, then for any $0<r<1$, it follows from \cite[Theorem 1.4]{MR3714566} that
\begin{equation*}
|\nabla_{y,s}u_0|\leq ||u_0||_{L^2(Q_1(x_0,t_0))},\quad \text{for any }(x,t)\in (B(x_0,r)\setminus \mathcal{I})\times (t_0-r^2,t_0).
\end{equation*}

Consequently, the above estimate can be improved as
\begin{equation}
\sup_{Q_{r}(x_0,t_0)}|\nabla_{x,t}u_0|\leq C||u_0||_{L^2(Q_1(x_0,t_0))}.
\end{equation}

To proceed, the key ingredient of the proof of Theorem 4.1 is that the function $\tilde{u}_0(z,t)=:u_0(P^{-1}(z),t)$ enjoys a better regularity estimate.
Indeed, by the same argument establishing $(2.42)$ above, $\tilde{u}_0$ satisfies
\begin{equation}\partial_t\tilde{u}_0(z,t)-\tilde{A}_{ij}(z)\partial_{ij}\tilde{u}(z,t)=0\quad \text{in}\quad  P^{-1}(B(x_0,1))\times (t_0-1,t_0).\end{equation}

Then, noting $\partial_{z_j}\tilde{u}_0\in L^\infty_{\text{loc}}{(P^{-1}(B(x_0,1))\times (t_0-1,t_0))}$ and applying \cite[Theorem 2.2]{MR2300337} after multiplying a suitable cut-off function, we can obtain the following interior $W^{2,1}_{p}$ estimates:
\begin{equation}
||\tilde{u}_0||_{W^{2,1}_{p,\text{loc}}(\tilde{Q}_1(x_0,t_0))}\leq C||\tilde{u}_0||_{L^2(\tilde{Q}_1(x_0,t_0))},\quad \text{for any }p\in(1,\infty),
\end{equation}where $\tilde{Q}_r(x_0,t_0)=:P^{-1}(B(x_0,r))\times (t_0-r^2,t_0)$.

Choose a smoothing domain $\tilde{\Omega}$ such that $P^{-1}(B(x_0,1/2))\subset\tilde{\Omega}\subset P^{-1}(B(x_0,2/3))$. In view of the embedding inequality \cite[Chapter 2, Theorem 3.4]{ChenYazhe}, we have
\begin{equation}
||\nabla \tilde{u}_0||_{C^{\alpha,\alpha/2}(\tilde{\Omega}\times (t_0-1/4,t_0))}\leq C||\tilde{u}_0||_{W^{2,1}_p(\tilde{Q}_{2/3}(x_0,t_0))},
\end{equation}
where $0<\alpha=1-\frac{d+2}{p}<1$.

In conclusion, there holds
\begin{equation}\begin{aligned}
||\partial_t \tilde{u}_0||_{L^\infty(\tilde{Q}_{1/2}(x_0,t_0))}+||\nabla \tilde{u}_0||_{C^{\alpha,\alpha/2}(\tilde{\Omega}\times (t_0-1/4,t_0))}\leq C||\tilde{u}_0||_{L^2(\tilde{Q}_{1}(x_0,t_0))}.
\end{aligned}\end{equation}
Therefore,
$$\begin{aligned}
&\left|\tilde{u}_0(P(x),t)-\tilde{u}_0(P(x_0),t_0)-\left(P(x)-P(x_0)\right)\fint_{Q_\theta(x_0,t_0)}\nabla \tilde{u}_0(P(z),s)\right|\\
\leq&\left|\tilde{u}_0(P(x),t_0)-\tilde{u}_0(P(x_0),t_0)-\left(P(x)-P(x_0)\right)\fint_{Q_\theta(x_0,t_0)}\nabla \tilde{u}_0(P(z),s)\right|\\
&+\left|\tilde{u}_0(P(x),t)-\tilde{u}_0(P(x),t_0)\right|\\[5pt]
\leq & C \theta^{1+\alpha}||\nabla \tilde{u}_0||_{C^{\alpha,\alpha/2}(\tilde{\Omega}\times (t_0-1/4,t_0))}+C \theta^2 ||\partial_t \tilde{u}_0||_{L^\infty(\tilde{Q}_{1/2}(x_0,t_0))}\\[5pt]
\leq & C \theta^{1+\alpha}||{u}_0||_{L^2(Q_{1}(x_0,t_0))}.\\[5pt]
\end{aligned}$$

Finally, since $\nabla \tilde{u}_0(P(x),t)=(\nabla P(x))^{-1}\cdot \nabla u_0(x,t)$, we obtain $(4.2)$.
\end{proof}

\noindent\textbf{Proof of Theorem 1.4}. After obtaining the result above, we can take advantage of the
uniform H-convergence of the multi-scale problem $\partial_tu_\varepsilon+\mathcal{L}_\varepsilon
u_\varepsilon=0$ in $Q$ to the homogeneous effective problem $\partial_t u_0+\mathcal{L}_0 u_0=0$ in $Q$, and hence the multi-scale solution $u_\varepsilon$  inherits the medium-scale regularity of the solution $u_0$,
which states that $u_\varepsilon$ satisfies an estimate similar to $(4.2)$ for $\varepsilon$ small, due to
the $H$-convergence in Theorem 1.1. Next, we need to iterate the above estimate satisfied for small
$\varepsilon$  to improve the medium-scale regularity, and complete the proof via a blow-up argument. (Across the interface, we apply De Giorgi Nash-Moser estimates \cite{MR100158} and
 \cite[Theorem 1.4]{MR3714566} to ensure the interior Lipschitz estimates for $\varepsilon$=1, similar to the explanation of $(4.3)$.)

Since all of the operations above are totally similar to \cite[Theorem 4.1]{MR3974127} and \cite[Theorem 5.1]{MR3361284}, we omit the details for simplicity and refer to \cite[Theorem 4.1]{MR3974127} and \cite[Theorem 1.2]{MR3361284} for more details.

In conclusion, we have obtained the uniform interior Lipschitz estimates for the homogenous equations $\partial_tu_\varepsilon+\mathcal{L}_\varepsilon
u_\varepsilon=0$ in $Q$. Moreover, due to De Giorgi-Nash-Moser Theorem \cite{MR100158},  the operator $\partial_t+\mathcal{L}_\varepsilon$ admits a fundamental solution $\Gamma_\varepsilon(x,y;t,s)$ and satisfies the Gaussian estimate
$$|\Gamma_\varepsilon(x,y;t,s)|\leq \frac{C}{(t-s)^{d/2}}\exp\left\{-\frac{\mu|x-y|^2}{t-s}\right\}$$
for any $x,y\in \mathbb{R}^d$ and $-\infty<s<t<\infty$, where $\mu>0$ depends only on $\kappa$ and $C>0$ depends on $d$ and $\kappa$ (see also \cite{MR217444,MR855753} for lower bounds). Then, the interior Lipschitz estimates above yield the gradient estimates of $\Gamma_\varepsilon(x,y;t,s)$:
$$|\nabla_x\Gamma_\varepsilon(x,y;t,s)|+|\nabla_y\Gamma_\varepsilon(x,y;t,s)|\leq \frac{C}{(t-s)^{(d+1)/2}}\exp\left\{-\frac{\mu|x-y|^2}{t-s}\right\}.$$

Consequently, via a suitable localization and the representation of the solution with the help of the fundamental solution $\Gamma_\varepsilon(x,y;t,s)$, a direct computation would yield the desired estimate $(1.10)$ after in view of \cite[Lemma 2.2, 2.3]{MR3361284}. See the proof of \cite[Theorem 1.2]{MR3361284} for the details, which we omit for simplicity.
\qed

\begin{center}\section*{Acknowledgement}\end{center}
%

\normalem\bibliographystyle{plain}{}

\end{document}